\documentclass[twoside,11pt]{article}

\newif\ifarxivversion
\arxivversiontrue
\ifarxivversion
  \usepackage[preprint]{jmlr2e}
\else
  \usepackage{jmlr2e}
\fi
\usepackage{placeins}
\usepackage{needspace}
\usepackage{xcolor}

\usepackage{amsmath}
\usepackage{mathtools}
\usepackage{bm}
\usepackage{booktabs}
\usepackage{multirow}
\usepackage{algorithm}
\usepackage{algorithmic}
\usepackage{lastpage}
\usepackage[capitalise,nameinlink]{cleveref}
\hypersetup{
  hidelinks,
  hypertexnames=false,
  pdftitle={Retraction-Free Nonsmooth Optimization on the Stiefel Manifold: Subgradient and Proximal Landing Methods},
  pdfauthor={Zilong Cui, Tongxin Hu, Bin Gao, and Ran Gu},
  pdfsubject={Retraction-free methods for nonsmooth composite optimization on the Stiefel manifold},
  pdfkeywords={nonsmooth optimization, Stiefel manifold, retraction-free methods, subgradient and proximal methods, Landing methods}
}

\graphicspath{{figures/}}
\providecommand{\R}{}
\renewcommand{\R}{\mathbb{R}}

\providecommand{\St}{}
\renewcommand{\St}{\operatorname{St}}
\providecommand{\sym}{}
\renewcommand{\sym}{\operatorname{sym}}
\providecommand{\skew}{}
\renewcommand{\skew}{\operatorname{skew}}
\providecommand{\prox}{}
\renewcommand{\prox}{\operatorname{prox}}
\providecommand{\dist}{}
\renewcommand{\dist}{\operatorname{dist}}
\providecommand{\rank}{}
\renewcommand{\rank}{\operatorname{rank}}

\providecommand{\argmin}{}
\renewcommand{\argmin}{\operatorname*{arg\,min}}

\newcommand{\inner}[2]{\left\langle #1,#2\right\rangle}
\newcommand{\proofstep}[1]{%
  \par\medskip{\setlength{\parindent}{0pt}\emph{#1}\par}\smallskip}
 
\newtheorem{assumption}{Assumption}
\crefname{assumption}{Assumption}{Assumptions}
\crefname{algorithm}{Algorithm}{Algorithms}
\crefname{theorem}{Theorem}{Theorems}
\crefname{lemma}{Lemma}{Lemmas}
\crefname{proposition}{Proposition}{Propositions}
\crefname{corollary}{Corollary}{Corollaries}
\crefname{definition}{Definition}{Definitions}
\crefname{remark}{Remark}{Remarks}
\crefname{appendix}{Appendix}{Appendices}
\Crefname{appendix}{Appendix}{Appendices}

\usepackage{etoolbox}
\pretocmd{\footnoterule}{\ifnum\value{page}=1\relax\vfill\fi}{}%
  {\PackageError{titlefootnote}{Cannot position title-page footnote}{Check footnote formatting.}}
\makeatletter
\newcommand{\prooflinktarget}{}
\newcommand{\prooflink}[1]{\gdef\prooflinktarget{#1}}
\newcommand{\prooflinkedheading}[1]{%
  \ifdefempty{\prooflinktarget}{#1}{%
    \hyperref[\prooflinktarget]{#1}%
    \global\let\prooflinktarget\@empty}}
\patchcmd{\@begintheorem}{#1\ #2}{\prooflinkedheading{#1\ #2}}{}%
  {\PackageError{prooflinks}{Cannot patch theorem heading}{Check theorem formatting.}}
\makeatother

\jmlrheading
  {}
  {2026}
  {1--\pageref{LastPage}}
  {}
  {}
  {}
  {Zilong Cui, Tongxin Hu, Bin Gao, and Ran Gu}

\ifarxivversion\else
\makeatletter
\apptocmd{\ps@jmlrtps}{%
  \def\@oddhead{\scriptsize Journal of Machine Learning Research\hfill Submission for review}%
  \def\@oddfoot{\parbox[t]{\textwidth}{\raggedright\scriptsize
    \copyright 2026 Zilong Cui, Tongxin Hu, Bin Gao, and Ran Gu.\\[5pt]
    License: CC-BY 4.0, see \url{https://creativecommons.org/licenses/by/4.0/}.\hfill}}%
}{}{\PackageError{submissionheader}{Cannot set submission header}{Check JMLR page style.}}
\makeatother
\fi

\ShortHeadings
  {Nonsmooth Landing on the Stiefel Manifold}
  {Cui, Hu, Gao, and Gu}

\firstpageno{1}

\begin{document}
\raggedbottom

\title{Retraction-Free Nonsmooth Optimization on the Stiefel Manifold:
Subgradient and Proximal Landing Methods}

\author{%
\name Zilong Cui \email zl\_c@mail.nankai.edu.cn \\
\addr NITFID, School of Statistics and Data Science\\
Nankai University\\
Tianjin 300071, China
\AND
\name Tongxin Hu \email hutongxin26@mails.ucas.ac.cn \\
\addr 
University of Chinese Academy of Sciences\\
State Key Laboratory of Mathematical Sciences,
    Academy of Mathematics and Systems Science\\ 
    Chinese Academy of Sciences\\
    Beijing, 100190, China
\AND
\name Bin Gao \email gaobin@lsec.cc.ac.cn \\
\addr State Key Laboratory of Mathematical Sciences,
    Academy of Mathematics and Systems Science\\ 
    Chinese Academy of Sciences\\
Beijing, 100190, China
\AND
\name Ran Gu\thanks{Corresponding author.} \email rgu@nankai.edu.cn \\
\addr NITFID, School of Statistics and Data Science, LPMC, KLMDASR, LEBPS and AAIS \\Nankai University\\Tianjin, 300071, China
}

\editor{Under Review}

\begin{NoHyper}
\maketitle
\end{NoHyper}

\begin{abstract}%
We develop three retraction-free Landing methods for nonsmooth composite optimization over the Stiefel manifold, combining an (approximately) tangential direction with an infeasibility-reducing normal direction.
The subgradient Landing method, with normal stepsizes parameterized by $\alpha\in[0,1]$, has best-iterate complexity bounds $\mathcal O(\epsilon^{-4})$ for surrogate stationarity and $\mathcal O(\epsilon^{-2/(2-\alpha)})$ for feasibility.
For $\alpha=0$, it converges locally $R$-linearly under local weak sharpness and mild assumptions.
The proximal gradient Landing method computes a tangential direction by solving a proximal subproblem and combines it with the same normal direction.
Moreover, the inexact proximal gradient Landing method allows the proximal subproblem to be solved approximately and applies an explicit correction to control violations of the tangency condition.
Both proximal methods have outer-iteration complexity bounds $\mathcal O(\epsilon^{-2})$ for stationarity and $\mathcal O(\epsilon^{-1})$ for feasibility, and every accumulation point is first-order stationary.
Under an additional asymptotic Bregman-error condition and the Kurdyka--{\L}ojasiewicz property of a suitable auxiliary function, the primal sequence of the inexact method converges to a first-order stationary point.
Numerical experiments illustrate the effectiveness of all three methods. On the tested sparse principal component analysis instances, the inexact method has favorable running times, and both proximal methods maintain small feasibility residuals.
\end{abstract}

\begin{keywords}
nonsmooth optimization, Stiefel manifold, retraction-free methods,
subgradient and proximal methods, Landing methods
\end{keywords}

\section{Introduction}
\label{sec:introduction}

We consider nonsmooth composite optimization over the Stiefel manifold:
\begin{equation}\label{problem}
\min_{X\in\St(n,p)} f(X):=h(X)+r(X),
\quad
\St(n,p):=\{X\in\R^{n\times p}:X^\top X=I_p\},
\end{equation}
where \(n\ge p\), \(h:\R^{n\times p}\to\R\) is continuously differentiable
and possibly nonconvex, and \(r:\R^{n\times p}\to\R\) is a convex,
possibly nonsmooth function. Applications of this model in data analysis
and scientific computing include sparse principal component analysis
\citep{jolliffe2003modified}, compressed-mode computation
\citep{ozolicnvs2013compressed}, orthogonality-constrained unsupervised
feature selection \citep{yang2011}, orthogonal dictionary learning
\citep{li2021weakly}, and least-absolute-deviation formulations of robust
subspace recovery \citep{lerman2018overview,li2021weakly}.

\subsection{Related Work}
\label{sec:related-work}

We review methods for problem~\eqref{problem} that maintain feasibility,
use variable splitting, or avoid orthogonalization at every iteration.

Riemannian subgradient and proximal gradient methods
maintain feasibility by combining tangent-space search directions with
retractions.
 \citet{li2021weakly} establish global iteration complexity bounds for the Riemannian subgradient method (RieSub) under weak convexity, as well as local \(R\)-linear convergence under a local weak sharpness condition.
The manifold proximal gradient
 method (ManPG) of \citet{chen2020proximal} exploits the composite structure by solving a convex proximal subproblem over the tangent space of the Stiefel manifold.
 \citet{huang2022riemannian} develop a proximal gradient framework for general Riemannian manifolds.  Its subsequent inexact extension \citep{huang2023inexact} allows approximate solutions of the proximal subproblems.

Variable-splitting methods separate the treatment of the nonsmooth term
from that of the orthogonality constraints by introducing auxiliary
variables linked through equality constraints. The splitting orthogonality constraints (SOC) method
\citep{lai2014splitting} and the proximal alternating minimized augmented
Lagrangian (PAMAL) method \citep{chen2016augmented}  split the problem by introducing an auxiliary variable and enforce orthogonality through projection.
In the manifold alternating direction method of multipliers
(MADMM) \citep{kovnatsky2016madmm} and the Riemannian alternating direction
method of multipliers (RADMM) \citep{li2025riemannian}, auxiliary variables
 are used to handle the nonsmooth term, while the orthogonal variable is updated using manifold optimization steps.
Beyond these splitting schemes, manifold augmented Lagrangian methods
\citep{zhou2021semismooth,deng2023manifold,deng2025oracle,xu2025oracle}
 are developed with convergence and oracle complexity guarantees.

The above methods use retractions or projections
at every iteration to enforce orthogonality on the original variable or
an auxiliary variable. These operations can incur substantial computational
cost, particularly when objective and gradient evaluations are inexpensive.
This motivates methods that allow infeasible
iterates without requiring orthogonalization at every iteration.
For smooth problems, \citet{gao2019parallelizable} develop PLAM and PCAL, which allow infeasible iterates and avoid orthonormalization during the main iterations. \citet{xiao2021exact} develop an
exact-penalty approach for \(\ell_{2,1}\)-regularized optimization over the
Stiefel manifold, while \citet{hu2024constraint} propose a
constraint-dissolving reformulation for nonsmooth optimization on this
manifold. The sequential linearized
proximal gradient method (SLPG)~\citep{liu2024penaltyfree}
instead alternates between tangential proximal steps and normal steps to improve optimality and feasibility, respectively, and allows inexact computation of the tangential steps.
The linearized smoothing augmented
 Lagrangian method (LSALM)~\citep{zhu2026primal} combines a linearized proximal update with
updates of a smoothing center and the constraint multiplier, giving a
single-loop scheme when the required proximal mapping is available.
For a class of embedded manifolds defined by nonlinear equality constraints,
the feasibility-safeguarded inexact proximal linearized method (FSIPL)
\citep{xie2026fsipl} combines inexact proximal linearization with a normal
correction and a feasibility-safeguarded nonmonotone line search.
Its main iterations require neither projection nor retraction onto the manifold.

\subsection{Motivation and Contributions}
\label{sec:motivation-contributions}

For smooth problems, Landing methods
\citep{ablin2022fast,gao2022optimization,ablin2024infeasible}
 avoid per-iteration orthogonalization by combining a tangential direction with an infeasibility-reducing normal direction.
For general smooth nonlinear equality-constrained problems, \citet{schechtman2023orthogonal}
propose an orthogonal-directions method that recovers the Landing algorithm under a suitable choice of projection metric.
We extend the Landing framework to the nonsmooth composite problem~\eqref{problem}, which is not straightforward and requires exquisite constructions.

Following the geometric
interpretation of Landing flows in~\citet{gao2022optimization},
for a full-column-rank matrix \(X\in\R^{n\times p}\),  we consider the layered manifold~\citep{goyens2024layered}
\begin{equation}\label{intro:layer}
\St_{X^\top X}=\{Y\in\R^{n\times p}:Y^\top Y=X^\top X\}.
\end{equation}
 When \(X\) is feasible, i.e., \(X\in\St(n,p)\), this layered manifold reduces to the Stiefel manifold.
We measure orthogonality violation by
\begin{equation}\label{intro:normal-potential}
\mathcal N(X)=\frac14\|X^\top X-I_p\|_{\mathrm F}^{2}.
\end{equation}
Its negative gradient defines the normal direction
\begin{equation}\label{intro:normal-direction}
d_{\mathrm N}(X)=-\nabla\mathcal N(X)=-X(X^\top X-I_p).
\end{equation}
 For any \(d_{\mathrm T}(X)\in\mathrm T_X\St_{X^\top X}\), the directions \(d_{\mathrm T}(X)\) and \(d_{\mathrm N}(X)\) are orthogonal with respect to the Frobenius inner product. This orthogonality provides the geometric basis for an additive update that combines a tangential direction to optimize the objective with a normal direction to reduce infeasibility.
 \par In this paper, we construct an (approximately) tangential direction \(d_{\mathrm T,k}\) at each iterate \(X_k\) and retain the normal direction \(d_{\mathrm N,k}=d_{\mathrm N}(X_k)\), yielding the retraction-free update
\begin{equation}\label{intro:landing-template}
X_{k+1}=X_k+\eta_k d_{\mathrm T,k}+\zeta_k d_{\mathrm N,k},
\end{equation}
where \(\eta_k,\zeta_k>0\) are the tangential and normal stepsizes,
respectively. We construct the tangential direction using subgradient information or by solving a proximal subproblem. For the proximal approach, we first
establish convergence guarantees when the subproblems are solved exactly. In practice, each proximal subproblem is solved by an iterative method, which can be costly when high accuracy is required at every outer iteration. This motivates an inexact variant. An inexact solution, however, may yield a direction that is not tangent to the layered manifold and therefore need not be orthogonal to the normal direction. We address this issue through an explicit correction that controls violations of the tangency condition, together with acceptance conditions that allow early termination of the inner iterations while ensuring convergence.
Our main contributions are as follows.

\begin{enumerate}
\item \textbf{Subgradient Landing with parameterized normal stepsizes.}
We develop a family of subgradient Landing methods (SubLanding) with
\(\zeta_k=\lambda\eta_k^\alpha\), where \(\lambda>0\) and
\(\alpha\in[0,1]\).
Given a subgradient \(\xi_k\in\partial r(X_k)\), we use the Riemannian
metric of~\citet{gao2022optimization} on the layered manifold to construct
the tangential direction as
\begin{equation}\label{intro:subgradient-direction}
d_{\mathrm T,k}=-2\skew\bigl((\nabla h(X_k)+\xi_k)X_k^\top\bigr)X_k,
\end{equation}
where \(\skew(A)=(A-A^\top)/2\). A direction formed from an arbitrarily selected
subgradient need not vanish at a stationary point, so its norm alone
is not a suitable stationarity measure. To analyze SubLanding, we
introduce a Moreau-type
surrogate stationarity measure \(\Theta_\mu\), motivated by the
construction of \citet{li2021weakly}. The surrogate and the feasibility
residual vanish simultaneously if and only if \(X\) is a first-order
stationary point. We establish
best-iterate complexity bounds of \(\mathcal O(\epsilon^{-4})\)
for \(\Theta_\mu(X_k)\leq\epsilon\) and
\(\mathcal O(\epsilon^{-2/(2-\alpha)})\) for
\(\|X_k^\top X_k^{}-I_p\|_{\mathrm F}\leq\epsilon\)
(\cref{thm:alpha_sublanding_complexity}). For \(\alpha=0\), we also prove local
\(R\)-linear convergence under local weak sharpness, sufficiently close
initialization, suitable parameters, and geometrically decreasing
tangential stepsizes (\cref{thm:sublanding_local_linear_convergence}).

\item \textbf{Proximal gradient Landing with exact tangential directions.}
We develop the proximal gradient Landing method (PGLanding) by combining
the exact solution of a proximal subproblem on the tangent space of the layered manifold with
the normal direction.  For a fixed proximal parameter \(t>0\), define
\begin{equation}\label{intro:proximal-model}
V(X):=\underset{Z\in\mathrm T_X\St_{X^\top X}}{\operatorname*{arg\,min}}\left\{\inner{\nabla h(X)}{Z}+\frac{1}{2t}\|Z\|_{\mathrm F}^{2}+r(X+Z)\right\}.
\end{equation}
The tangential direction is \(d_{\mathrm T,k}=V(X_k)\).
We choose \(\zeta_k=\lambda\eta_k\), with \(\lambda>0\), and use a
merit-function-based line search to select \(\eta_k\). We establish best-iterate
outer-iteration complexity bounds of \(\mathcal O(\epsilon^{-2})\) for
\(\|V(X_k)\|_{\mathrm F}/t\leq\epsilon\) and
\(\mathcal O(\epsilon^{-1})\) for
\(\|X_k^\top X_k^{}-I_p\|_{\mathrm F}\leq\epsilon\). We also prove that every
accumulation point is first-order stationary (\cref{thm:prox_complexity}).

\item \textbf{Inexact proximal gradient Landing with tangency correction.}
The inexact proximal gradient Landing method (IPGLanding) uses directions
computed from intermediate multipliers and applies an explicit correction
to control violations of the tangency condition.
Let \(V(X,\Gamma)\) denote the proximal direction associated with a
symmetric multiplier \(\Gamma\) for the tangency constraint.
When \(\Gamma\) solves the multiplier equation, \(V(X,\Gamma)=V(X)\)
(\cref{prop:prox_wellposedness}).
Let \(\Gamma_k\) denote the multiplier accepted at outer iteration \(k\).
The corrected direction is
\begin{equation}\label{intro:correction}
d_{\mathrm T,k}=V(X_k,\Gamma_k)
-X_k\operatorname{sym}\!\bigl(X_k^\top V(X_k,\Gamma_k)\bigr),
\end{equation}
where \(\operatorname{sym}(A)=(A+A^\top)/2\).
We choose \(\zeta_k=\lambda\eta_k\), with \(\lambda>0\), and use a
nonmonotone merit-function-based line search to select \(\eta_k\).
We establish best-iterate
outer-iteration complexity bounds of
\(\mathcal O(\epsilon^{-2})\) for
\(\|V(X_k,\Gamma_k)\|_{\mathrm F}/t\leq\epsilon\) and
\(\mathcal O(\epsilon^{-1})\) for
\(\|X_k^\top X_k^{}-I_p\|_{\mathrm F}\leq\epsilon\). We also prove that every
accumulation point is first-order stationary (\cref{thm:iprox_complexity}).
Under an additional asymptotic
Bregman-error condition and the Kurdyka--\L ojasiewicz property of a
suitable auxiliary function, we further prove that the primal sequence
converges to a first-order stationary point (\cref{thm:iprox_kl_convergence}).

\item \textbf{Numerical experiments.}
For SubLanding, we
investigate the effect of the
normal stepsize scaling on infeasibility reduction and examine convergence
with geometrically decreasing stepsizes. For the proximal methods, we
compare objective values, feasibility residuals, iteration counts, and
running times with ManPG-Ada, RADMM, SOC, LSALM, and SLPG on random,
structured-synthetic, and real-data sparse principal component analysis
instances. These experiments demonstrate the practical effectiveness of
the proposed methods on the tested instances, with favorable running
times for IPGLanding.
\end{enumerate}

\cref{tab:method-guarantees} compares the structural features and theoretical
guarantees of the proposed methods with existing methods.

\enlargethispage{\baselineskip}
\begin{table}[!htbp]
\begingroup
\centering
\small
\setlength{\tabcolsep}{2.2pt}
\renewcommand{\arraystretch}{1.08}
\begin{tabular*}{\textwidth}{@{\extracolsep{\fill}}lcccc@{}}
\toprule
& \raisebox{6pt}{\mbox{Retraction-free}}
& \shortstack{Stationarity\\complexity}
& \shortstack{Feasibility\\complexity}
& \shortstack{Full-sequence\\convergence} \\
\midrule
SOC~\citep{lai2014splitting} & -- & -- & -- & -- \\
ManPG~\citep{chen2020proximal} & -- & \(\mathcal O(\epsilon^{-2})\) & -- & -- \\
RieSub~\citep{li2021weakly} & -- & \(\mathcal O(\epsilon^{-4})\) & -- & -- \\
RPG~\citep{huang2022riemannian} & -- & \(\mathcal O(\epsilon^{-2})\) & -- & \(\checkmark\) \\
SLPG~\citep{liu2024penaltyfree} & \(\checkmark\) & \(\mathcal O(\epsilon^{-2})\) & \(\mathcal O(\epsilon^{-1})\) & -- \\
RADMM~\citep{li2025riemannian} & -- & \(\mathcal O(\epsilon^{-4})\) & -- & -- \\
LSALM~\citep{zhu2026primal} & \(\checkmark\) & \(\mathcal O(\epsilon^{-3})\) & \(\mathcal O(\epsilon^{-3})\) & \(\checkmark\) \\
FSIPL~\citep{xie2026fsipl} & \(\checkmark\) & \(\mathcal O(\epsilon^{-2})\) & \(\mathcal O(\epsilon^{-1})\) & \(\checkmark\) \\
\midrule
SubLanding (this paper) & \(\checkmark\) & \(\mathcal O(\epsilon^{-4})\) & \(\mathcal O(\epsilon^{-1})\) & -- \\
PGLanding (this paper) & \(\checkmark\) & \(\mathcal O(\epsilon^{-2})\) & \(\mathcal O(\epsilon^{-1})\) & -- \\
IPGLanding (this paper)   & \(\checkmark\) & \(\mathcal O(\epsilon^{-2})\) & \(\mathcal O(\epsilon^{-1})\) & \(\checkmark\) \\

\bottomrule
\end{tabular*}
\endgroup
\caption{Comparison of first-order methods for nonsmooth optimization over the Stiefel manifold. Retraction-free refers to the main iterations. Best-iterate outer-iteration complexity bounds use method-specific measures. SubLanding bounds correspond to \(\alpha=0\). A dash means not applicable or not established.}
\label{tab:method-guarantees}
\end{table}
\FloatBarrier

\subsection{Organization}
\label{sec:organization}
\cref{sec:preliminaries} introduces notation and preliminary results.
\cref{sec:sublanding,sec:proxlanding,sec:iproxlanding} present SubLanding,
PGLanding, and IPGLanding and their convergence analyses, respectively.
\cref{sec:experiments} reports numerical experiments, and
\cref{sec:conclusion} discusses future work.
The main proofs are collected in~\cref{app:technical-proofs}.

\FloatBarrier

\section{Preliminaries}
\label{sec:preliminaries}
This section fixes the notation and standing assumptions used throughout the
paper and records the constraint geometry and first-order stationarity
conditions needed in the subsequent analysis.

\subsection{Notation}

Let \(\R_*^{n\times p}=\{X\in\R^{n\times p}:\rank(X)=p\}\),
\(\mathbb S^p=\{S\in\R^{p\times p}:S^\top=S\}\), and
\(\mathrm O(n)=\{Q\in\R^{n\times n}:Q^\top Q=I_n\}\).
The Frobenius inner product is
\(\langle U,V\rangle=\operatorname{tr}(U^\top V)\), and
\(\|U\|_{\mathrm{F}}\) and \(\|U\|_2\) denote the Frobenius and spectral norms.
For a matrix \(U\), \(\|U\|_1=\sum_{i,j}|U_{ij}|\) is the entrywise
\(\ell_1\)-norm. For a square
matrix \(A\), let \(\sym(A)=(A+A^\top)/2\) and
\(\skew(A)=(A-A^\top)/2\).

For a smooth embedded manifold \(\mathcal M\),
\(\mathrm T_X\mathcal M\) denotes its tangent space at \(X\).
For a nonempty closed convex set \(\mathcal C\),
\(\operatorname{Proj}_{\mathcal C}\) denotes the Frobenius orthogonal
projection onto \(\mathcal C\). For a nonempty set \(\mathcal S\), define
\(\dist(U,\mathcal S)=\inf_{V\in\mathcal S}\|U-V\|_{\mathrm{F}}\).

For a proper lower semicontinuous function \(\varphi\), \(\partial\varphi\)
denotes its limiting (Mordukhovich) subdifferential
\citep{rockafellar1998variational} in the ambient Euclidean space.
When \(\varphi\) is convex, it coincides with the convex subdifferential.
For \(t>0\), the proximal mapping of
\(r\) is defined by
\[
\prox_{t r}(Z):=\argmin_Y\left\{r(Y)+\frac{1}{2t}\|Y-Z\|_{\mathrm F}^{2}\right\}.
\]
Throughout, \(\varepsilon\in(0,1)\) denotes the fixed safe-region radius,
whereas \(\epsilon>0\) denotes a target stationarity or feasibility accuracy.

\subsection{Assumptions, Geometry, and Stationarity}

\begin{assumption}\label{ass:objective_functions}
The following conditions hold.
\begin{enumerate}
\item The function \(h:\R^{n\times p}\to\R\) is continuously
differentiable, and its gradient is \(L_h\)-Lipschitz continuous for some
\(L_h>0\), that is,
\(\|\nabla h(X)-\nabla h(Y)\|_{\mathrm{F}}\leq L_h\|X-Y\|_{\mathrm{F}}\)
for all \(X,Y\in\R^{n\times p}\).
\item The function \(r:\R^{n\times p}\to\R\) is convex, its proximal
mapping is efficiently computable, and it is \(L_r\)-Lipschitz continuous
for some \(L_r>0\), that is,
\(\lvert r(X)-r(Y)\rvert\leq L_r\|X-Y\|_{\mathrm{F}}\)
for all \(X,Y\in\R^{n\times p}\).
\end{enumerate}
\end{assumption}
Under~\cref{ass:objective_functions}, the composite function is
subdifferentially regular~\citep{rockafellar1998variational}. Consequently,
\begin{equation}\label{composite_subdifferential}
\partial f(X)=\nabla h(X)+\partial r(X).
\end{equation}
Since \(r\) is convex and \(L_r\)-Lipschitz continuous, every \(\xi\in\partial r(X)\) satisfies
\begin{equation}\label{nonsmooth_subgradient_bound}
\|\xi\|_{\mathrm{F}}\leq L_r.
\end{equation}
Moreover, Lipschitz continuity of \(\nabla h\) and convexity of \(r\) imply
that \(f\) is \(L_h\)-weakly convex. Thus, for all \(X,Y\in\R^{n\times p}\),
\[
f(Y)\geq f(X)+\langle F,Y-X\rangle
-\frac{L_h}{2}\|Y-X\|_{\mathrm{F}}^2
\quad\text{for every }F\in\partial f(X).
\]
Since the methods allow infeasible iterates, we introduce the safe
region~\citep{ablin2024infeasible} for \(\varepsilon\in(0,1)\),
\[
\St(n,p)^\varepsilon
=\{X\in\R^{n\times p}:\|X^\top X-I_p\|_{\mathrm{F}}\leq\varepsilon\}\subset\R_*^{n\times p}.
\]
On the safe region, let
\mbox{\(L_f^\varepsilon=\|\nabla h(0)\|_{\mathrm{F}}+L_h\sqrt{p(1+\varepsilon)}+L_r\)}. Then
\begin{equation}\label{safe_region_objective_bounds}
|f(X)-f(Y)|\leq L_f^\varepsilon\|X-Y\|_{\mathrm{F}},
\quad
\|F\|_{\mathrm{F}}\leq L_f^\varepsilon,
\end{equation}
for all \(X,Y\in\St(n,p)^\varepsilon\) and
\(F\in\partial f(X)\).
For \(X\in\St(n,p)^\varepsilon\), the tangent space of the layered manifold defined in~\eqref{intro:layer} is
\begin{equation*}
\mathrm T_X\St_{X^\top X}
=
\{Z:X^\top Z+Z^\top X=0\}
=
\{WX:W\in\R^{n\times n},W^\top=-W\}.
\end{equation*}

\begin{definition}
\label[definition]{def:first_order_stationarity}
A matrix \(X\in\R_*^{n\times p}\) is called a first-order stationary point of
 problem~\eqref{problem} if there are \(\xi\in\partial r(X)\) and
\(\Gamma\in\mathbb S^p\) such that
\begin{equation}\label{explicit_nonsmooth_KKT}
X^\top X-I_p=0,
\quad
\nabla h(X)+\xi-2X\Gamma=0.
\end{equation}
We refer to~\cref{explicit_nonsmooth_KKT} as the
Karush--Kuhn--Tucker (KKT) system for~problem~\eqref{problem}.
\end{definition}

\section{The Subgradient Landing Method}
\label{sec:sublanding}
This section introduces the SubLanding iteration and its surrogate
stationarity measure, establishes safe-region invariance and global
complexity, and then derives a  local \(R\)-linear convergence result.

\subsection{Full-Rank Metric and the SubLanding Iteration}

To construct a tangential subgradient direction on the current  layered manifold, we use the metric of \citet{gao2022optimization}. For
\(X\in\R_*^{n\times p}\), this metric is given by
\begin{equation*}
g_X(U,V)
=
\left\langle
U,
\left(I_n-\frac12X(X^\top X)^{-1}X^\top\right)
V(X^\top X)^{-1}
\right\rangle,
\end{equation*}
for \(U,V\in\mathrm T_X\St_{X^\top X}\).
The Riemannian subdifferential of \(f\) on \(\St_{X^\top X}\) with respect to this metric
has the explicit form
\begin{equation}\label{canonical_riemannian_subdifferential}
\partial_{\St_{X^\top X}}^{g}f(X)
=
\left\{
2\skew\left((\nabla h(X)+\xi)X^\top\right)X:
\xi\in\partial r(X)
\right\}.
\end{equation}
Accordingly, at \(X\in\St(n,p)^\varepsilon\), select
\(\xi\in\partial r(X)\), set \(F=\nabla h(X)+\xi\), and define
\(d_{\mathrm{T}}(X;\xi)=-2\skew(FX^\top)X\in\mathrm T_X\St_{X^\top X}\).
By~\cref{canonical_riemannian_subdifferential}, \(d_{\mathrm{T}}(X;\xi)\) is a
negative Riemannian subgradient on \(\St_{X^\top X}\).   
We combine this tangential direction with the normal direction \(d_{\mathrm N}(X)\),
using tangential stepsizes \(\eta_k>0\) and normal stepsizes
\(\zeta_k=\lambda\eta_k^\alpha\), where \(\alpha\in[0,1]\) and \(\lambda>0\).
At each iterate \(X_k\), choose \(\xi_k\in\partial r(X_k)\) and set
\begin{equation}\label{sublanding_quantities}
F_k=\nabla h(X_k)+\xi_k,
\quad
W_k=2\skew(F_k^{}X_k^\top).
\end{equation}
Then the SubLanding update is
\begin{equation}\label{sublanding_iteration}
X_{k+1}
=X_k-\eta_kW_kX_k-\zeta_kX_k(X_k^\top X_k^{}-I_p).
\end{equation}
The iteration is summarized in~\cref{alg:SubLanding}.
\begin{algorithm}[H]
\caption{The Subgradient Landing Method (SubLanding)}
\label{alg:SubLanding}
\begin{algorithmic}[1]
\REQUIRE \(X_0\in\St(n,p)^\varepsilon\), \(\alpha\in[0,1]\),
\(\lambda>0\), and a sequence of stepsizes \(\eta_k>0\).
\FOR{\(k=0,1,\ldots\)}
    \STATE Choose \(\xi_k\in\partial r(X_k)\) and set
    \(F_k=\nabla h(X_k)+\xi_k\).
    \STATE Set \(W_k=2\skew(F_k^{}X_k^\top)\) and
    \(\zeta_k=\lambda\eta_k^\alpha\).
    \STATE Set
    \(X_{k+1}=X_k-\eta_kW_kX_k-\zeta_kX_k(X_k^\top X_k^{}-I_p)\).
\ENDFOR
\end{algorithmic}
\end{algorithm}
\subsection{A Moreau-Type Surrogate Stationarity Measure}
\label{subsec:sublanding-stationarity-measure}

For nonsmooth objectives, the
residual \(\dist(0,\partial_{\St_{X^\top X}}^g f(X))\) need not approach
zero even along a sequence converging to a first-order stationary point.
To address this difficulty in the global convergence analysis, we follow
the Moreau-envelope approach of~\citet{li2021weakly} and construct a surrogate
using the orthogonal-orbit geometry underlying the SubLanding direction.

For
\(W=2\skew(FX^\top)\), one has \(W^\top=-W\), and the curve
\(\tau\mapsto\exp(-\tau W)X\) has initial velocity \(-WX\). Thus a step
along the tangential direction is a first-order approximation of a left
orthogonal rotation. For every \(X\in\R_*^{n\times p}\),
\cref{prop:layer_orbit_identity} shows that the layered manifold is its left orthogonal orbit.
\begin{equation}\label{eq:layer_orbit_identity}
\St_{X^\top X}=\{QX:Q\in\mathrm O(n)\}.
\end{equation}
Optimization over this orbit preserves the feasibility residual. We
therefore regularize the objective over orthogonal transformations,
penalizing their deviation from the identity. For \(\mu>0\), define
\begin{equation}\label{orthogonal_envelope}
f_\mu(X)
=
\min_{Q\in\mathrm O(n)}
\left\{f(QX)+\frac{1}{2\mu}\|Q-I_n\|_{\mathrm{F}}^2\right\}.
\end{equation}
Since \(\mathrm O(n)\) is compact and the minimized function is continuous,
the minimum is attained. Choose any minimizer
\(\widehat Q_X\) in~\cref{orthogonal_envelope} and set
\begin{equation}\label{Theta_definition}
\widehat X=\widehat Q_XX,
\quad
\Theta_\mu(X)
=
\frac{2\sqrt{1+\varepsilon}}{\mu}
\|\widehat Q_X-I_n\|_{\mathrm{F}}.
\end{equation}
The minimizer \(\widehat Q_X\) need not be unique, but the following result
holds for every such choice. It shows that \(\Theta_\mu(X)\) controls both
the distance from \(X\) to \(\widehat X\) and the stationarity residual
at \(\widehat X\) on the  layered manifold. Together with feasibility, its
vanishing characterizes first-order stationarity of problem~\eqref{problem}.

\prooflink{app:orthogonal_stationarity_proof}
\begin{proposition}
\label[proposition]{prop:orthogonal_stationarity}
For every \(X\in\St(n,p)^\varepsilon\) and
\(0<\mu<1/(L_h+L_f^\varepsilon)\),
\begin{equation*}
\|\widehat X-X\|_{\mathrm{F}}\leq\frac\mu2\Theta_\mu(X),\quad
\dist\bigl(0,\partial_{\St_{\widehat X^\top \widehat X}}^g f(\widehat X)\bigr)
\leq\Theta_\mu(X).
\end{equation*}
Moreover,
\begin{equation*}
\Theta_\mu(X)=0,\; X^\top X-I_p=0
\;\Longleftrightarrow\;
X\text{ is a first-order stationary point of problem~\eqref{problem}}.
\end{equation*}
\end{proposition}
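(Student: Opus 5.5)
The plan is to treat the three claims separately. The first two follow from a norm estimate and from first-order optimality of $\widehat Q_X$ over $\mathrm O(n)$. The characterization uses weak convexity of $f$ together with the KKT system~\eqref{explicit_nonsmooth_KKT}.

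\emph{Distance bound.} Since $X\in\St(n,p)^\varepsilon$, we have $\|X\|_2^2=\|X^\top X\|_2\le 1+\varepsilon$. Hence
\[
\|\widehat X-X\|_{\mathrm F}=\|(\widehat Q_X-I_n)X\|_{\mathrm F}\le\sqrt{1+\varepsilon}\,\|\widehat Q_X-I_n\|_{\mathrm F}=\tfrac{\mu}{2}\Theta_\mu(X).
\]

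\emph{Stationarity bound.} Set $\phi(Q)=f(QX)+\frac1{2\mu}\|Q-I_n\|_{\mathrm F}^2$, viewed as a function on $\R^{n\times n}$.
\begin{itemize}
\item $Q\mapsto h(QX)$ is $C^1$ and $Q\mapsto r(QX)$ is convex, so $\phi$ is locally Lipschitz and subdifferentially regular.
\item By the chain rule through the linear map $Q\mapsto QX$, we get $\partial\phi(Q)=\{FX^\top+(Q-I_n)/\mu:\ F\in\partial f(QX)\}$.
\item $\mathrm O(n)$ is a smooth manifold with normal space $\{QS:\ S\in\mathbb S^n\}$ at $Q$.
\end{itemize}
The necessary condition $0\in\partial\phi(\widehat Q)+N_{\mathrm O(n)}(\widehat Q)$ then yields $F\in\partial f(\widehat X)$ with $\bigl(FX^\top+(\widehat Q-I_n)/\mu\bigr)\widehat Q^\top$ symmetric. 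Since $FX^\top\widehat Q^\top=F\widehat X^\top$, taking skew parts gives
\[
\skew(F\widehat X^\top)=-\tfrac1\mu\skew\bigl((\widehat Q-I_n)\widehat Q^\top\bigr)=-\tfrac1\mu\skew(I_n-\widehat Q^\top)=-\tfrac1\mu\skew(\widehat Q-I_n).
\]
Therefore $\|2\skew(F\widehat X^\top)\|_{\mathrm F}\le\frac2\mu\|\widehat Q-I_n\|_{\mathrm F}$. By~\eqref{canonical_riemannian_subdifferential}, $2\skew(F\widehat X^\top)\widehat X\in\partial^g_{\St_{\widehat X^\top\widehat X}}f(\widehat X)$. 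Using $\|\widehat X\|_2=\|X\|_2\le\sqrt{1+\varepsilon}$, its norm is at most $\Theta_\mu(X)$.

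I expect this step to be the main obstacle, namely justifying the nonsmooth manifold optimality condition. I would invoke the standard limiting-subdifferential rule for minimizing a locally Lipschitz function over a smooth closed set: $0\in\partial\phi(\widehat Q)+N_{\mathrm O(n)}(\widehat Q)$ (Rockafellar--Wets, Thm.~8.15). Regularity is used to identify $\partial\phi$.

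\emph{Characterization, forward direction.} Suppose $\Theta_\mu(X)=0$ and $X^\top X=I_p$. Then $\widehat Q=I_n$ and $\widehat X=X$, so by the bound just proved some $F=\nabla h(X)+\xi$ satisfies $\skew(FX^\top)X=0$, i.e.\ $FX^\top X=XF^\top X$.
\begin{itemize}
\item Using $X^\top X=I_p$, this reads $F=X(F^\top X)$.
\item Left-multiplying by $X^\top$ gives $X^\top F=F^\top X$, which is symmetric.
\end{itemize}
Thus $\Gamma=\frac12X^\top F\in\mathbb S^p$ satisfies $F=2X\Gamma$, which is~\eqref{explicit_nonsmooth_KKT}.

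\emph{Characterization, converse.} Let $X$ be stationary with $F=2X\Gamma$. It suffices to show that $I_n$ is the unique minimizer of $\phi$; then every choice of $\widehat Q_X$ equals $I_n$ and $\Theta_\mu(X)=0$.
\begin{itemize}
\item By weak convexity, $f(QX)\ge f(X)+\langle FX^\top,Q-I_n\rangle-\frac{L_h}2\|(Q-I_n)X\|_{\mathrm F}^2$.
\item $S:=FX^\top=2X\Gamma X^\top$ is symmetric, and for orthogonal $Q$ one has $\sym(Q-I_n)=-\frac12(Q-I_n)^\top(Q-I_n)$. Hence
\[
\langle S,Q-I_n\rangle=\langle S,\sym(Q-I_n)\rangle\ge-\tfrac12\|S\|_2\|Q-I_n\|_{\mathrm F}^2.
\]
\item Since $\|X\|_2=1$, we have $\|S\|_2\le\|F\|_{\mathrm F}\le L_f^\varepsilon$ by~\eqref{safe_region_objective_bounds}, and $\|(Q-I_n)X\|_{\mathrm F}\le\|Q-I_n\|_{\mathrm F}$.
\end{itemize}
Combining these,
\[
\phi(Q)\ge f(X)+\tfrac12\bigl(\tfrac1\mu-L_f^\varepsilon-L_h\bigr)\|Q-I_n\|_{\mathrm F}^2.
\]
The coefficient is strictly positive because $\mu<1/(L_h+L_f^\varepsilon)$. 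So $\phi(Q)>\phi(I_n)=f(X)$ for every $Q\ne I_n$, and $I_n$ is the unique minimizer.
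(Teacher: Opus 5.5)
Your proposal is correct and follows essentially the same route as the paper. You use the same norm estimate for the distance bound, and you obtain the stationarity bound from the nonsmooth Fermat rule on \(\mathrm O(n)\): you phrase it with the normal space \(\{QS:S\in\mathbb S^n\}\), the paper with the equivalent tangent projection \(Q\skew(Q^\top Z)\). For the converse you use the same weak-convexity argument with \(\Delta+\Delta^\top=-\Delta^\top\Delta\) to show that \(I_n\) is the unique minimizer.
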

We use \(\Theta_\mu\) as an analytical surrogate rather than a numerical
stopping criterion.

\subsection{Safe-Region Invariance and Global Complexity}
\label{sec:unified_sublanding_convergence}

To establish safe-region invariance for SubLanding, we first examine the
normal-only update
\(Y_\zeta(X)=X+\zeta d_{\mathrm{N}}(X)=X-\zeta X(X^\top X-I_p)\).
To quantify its effect uniformly over the safe region, define
\[q_\varepsilon(\zeta)=\max_{|s|\leq\varepsilon}
|1-\zeta(1+s)(2-\zeta s)|.\]
The following proposition bounds the residual after this update and
identifies the stepsize range for which \(q_\varepsilon(\zeta)<1\).
\prooflink{app:normal_contraction_proof}
\begin{proposition}
\label[proposition]{prop:sublanding_exact_lambda_range}
For every \(X\in\St(n,p)^\varepsilon\) and every \(\zeta>0\),
\begin{equation}\label{exact_normal_contraction}
\|Y_\zeta(X)^\top Y_\zeta(X)-I_p\|_{\mathrm{F}}
\leq
q_\varepsilon(\zeta)\|X^\top X-I_p\|_{\mathrm{F}}.
\end{equation}
For a constant normal stepsize \(\zeta=\lambda>0\), \(q_\varepsilon(\lambda)<1\) if and only if
\begin{equation}\label{eq:sublanding_lambda_range}
0<\lambda<\lambda_{\mathrm{crit}}(\varepsilon),
\quad
\lambda_{\mathrm{crit}}(\varepsilon)=
\begin{cases}
\dfrac{2}{1+\varepsilon+\sqrt{1-\varepsilon^2}},
&0<\varepsilon\leq1/\sqrt2,\\[1.5ex]
2(\sqrt2-1),
&1/\sqrt2<\varepsilon<1.
\end{cases}
\end{equation}
\end{proposition}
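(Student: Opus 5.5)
The plan is to reduce everything to a scalar spectral computation. Set $S=X^\top X-I_p\in\mathbb S^p$. Since $X\in\St(n,p)^\varepsilon$, every eigenvalue $s$ of $S$ satisfies $|s|\le\|S\|_{\mathrm F}\le\varepsilon$. Expanding $Y_\zeta(X)=X(I_p-\zeta S)$ gives
\[
Y_\zeta^\top Y_\zeta-I_p=(I_p-\zeta S)(I_p+S)(I_p-\zeta S)-I_p .
\]
All factors on the right are polynomials in $S$, so the matrix is diagonalized by the eigenbasis of $S$. On an eigenvalue $s$ it acts as the scalar $(1-\zeta s)^2(1+s)-1$.

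A direct expansion gives
\[
(1-\zeta s)^2(1+s)-1=s\bigl[1-\zeta(1+s)(2-\zeta s)\bigr].
\]
To check this, write $(1-\zeta s)^2(1+s)=(1+s)-2\zeta s(1+s)+\zeta^2s^2(1+s)$. Subtracting $1$ and factoring out $s$ leaves $1-\zeta(1+s)(2-\zeta s)$.

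Writing $\phi(s)=1-\zeta(1+s)(2-\zeta s)$, the Frobenius norm becomes
\[
\|Y_\zeta^\top Y_\zeta-I_p\|_{\mathrm F}^2=\sum_i s_i^2\phi(s_i)^2\le q_\varepsilon(\zeta)^2\sum_i s_i^2=q_\varepsilon(\zeta)^2\|S\|_{\mathrm F}^2 .
\]
This proves \eqref{exact_normal_contraction}.

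For the stepsize range, $q_\varepsilon(\lambda)<1$ holds iff $-1<\phi(s)<1$ for all $|s|\le\varepsilon$; compactness makes the strict inequality uniform.

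\emph{Upper bound $\phi(s)<1$.} This is equivalent to $(1+s)(2-\lambda s)>0$. Since $1+s>0$, we need $2-\lambda s>0$ for $s\le\varepsilon$, i.e.\ $\lambda<2/\varepsilon$. This turns out to be implied by the lower-bound condition.

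\emph{Lower bound $\phi(s)>-1$.} This reads $g(s):=\lambda(1+s)(2-\lambda s)<2$ on $[-\varepsilon,\varepsilon]$. The function $g$ is a concave quadratic with vertex at $s^*=(2-\lambda)/(2\lambda)$ and value $g(s^*)=(2+\lambda)^2/4$ there.

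\emph{Case split.} There are two cases depending on whether the vertex lies in the interval.
\begin{itemize}
\item If $s^*\le\varepsilon$, i.e.\ $\lambda\ge2/(1+2\varepsilon)$, the maximum is $(2+\lambda)^2/4$. It is $<2$ iff $\lambda<2\sqrt2-2$.
\item If $s^*>\varepsilon$, the maximum is $g(\varepsilon)$. The condition $\lambda(1+\varepsilon)(2-\lambda\varepsilon)<2$ is the quadratic inequality $\varepsilon(1+\varepsilon)\lambda^2-2(1+\varepsilon)\lambda+2>0$. Its smaller root is
\[
\frac{(1+\varepsilon)-\sqrt{(1+\varepsilon)^2-2\varepsilon(1+\varepsilon)}}{\varepsilon(1+\varepsilon)}=\frac{2}{1+\varepsilon+\sqrt{1-\varepsilon^2}},
\]
after rationalizing.
\end{itemize}

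The main obstacle is the bookkeeping of assembling these cases into the stated threshold, so I would handle it explicitly. Comparing $2/(1+\varepsilon+\sqrt{1-\varepsilon^2})$ with $2/(1+2\varepsilon)$ shows they cross exactly where $\sqrt{1-\varepsilon^2}=\varepsilon$, i.e.\ $\varepsilon=1/\sqrt2$. At that point both equal $2\sqrt2-2$. For $\varepsilon\le1/\sqrt2$ the smaller root is $\le2/(1+2\varepsilon)$, so the admissible set is $(0,\text{smaller root})$. For $\varepsilon>1/\sqrt2$ the vertex regime is active and the condition becomes $\lambda<2(\sqrt2-1)$. In that regime I must also check that the larger root lies outside the region $s^*>\varepsilon$, so that no second admissible interval appears.

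Finally, I will verify that $\lambda_{\mathrm{crit}}<2/\varepsilon$, so that the upper-bound condition is automatic. This holds because $\lambda_{\mathrm{crit}}\le 2\sqrt2-2<2<2/\varepsilon$.
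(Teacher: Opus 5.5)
Your proof follows the paper's route. You diagonalize $X^\top X-I_p$, reduce to the scalar factor $s\,[1-\zeta(1+s)(2-\zeta s)]$, and then locate the vertex of the concave quadratic $\lambda(1+s)(2-\lambda s)$ against the thresholds $2/(1+2\varepsilon)$, $2/(1+\varepsilon+\sqrt{1-\varepsilon^2})$, and $2(\sqrt2-1)$. The conclusions are right, but three local steps need repair.

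\textbf{The vertex case.} Your case ``$s^*\le\varepsilon$'' also needs $s^*\ge-\varepsilon$. If $\varepsilon<1/2$ and $\lambda>2/(1-2\varepsilon)$, the vertex lies left of the interval. The maximum is then $g(-\varepsilon)$, not $(2+\lambda)^2/4$. The conclusion still holds, because admissibility requires $g(0)=2\lambda<2$, i.e.\ $\lambda<1$. The paper imposes $\lambda<1$ at the outset. This puts the vertex at $s^*=1/\lambda-1/2>1/2$ and makes $\phi<1$ automatic, so your separate $\lambda<2/\varepsilon$ check becomes unnecessary.

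\textbf{The closing inequality.} The bound $\lambda_{\mathrm{crit}}\le2\sqrt2-2$ is false for $\varepsilon<1/\sqrt2$. There $\varepsilon+\sqrt{1-\varepsilon^2}\in(1,\sqrt2)$, so $\lambda_{\mathrm{crit}}\in(2\sqrt2-2,1)$. What you need is $\lambda_{\mathrm{crit}}<1<2/\varepsilon$.

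\textbf{The assembly for $\varepsilon\le1/\sqrt2$.} State explicitly that the vertex regime admits no $\lambda$. This holds because $2/(1+2\varepsilon)\ge2(\sqrt2-1)$. In the equality case $\varepsilon=1/\sqrt2$, $\lambda=2(\sqrt2-1)$, the maximum is exactly $2$, so that $\lambda$ is also excluded.
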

For the full SubLanding update, the tangential step contributes a
second-order term to the  feasibility residual. Combining a bound on this
term with the normal contraction yields safe-region invariance under
suitable stepsize conditions. Together with the surrogate stationarity
measure introduced above, this leads to the following global complexity
result.

\prooflink{app:sublanding_master_descent_proof}
\begin{theorem}
\label{thm:alpha_sublanding_complexity}
Fix \(\alpha\in[0,1]\) and \(\lambda>0\), and impose
\cref{eq:sublanding_lambda_range} when \(\alpha=0\). There exist constants
\(c_{\mathrm{N}}>0\), \(\eta^*>0\), \(\mu^*>0\), and
\(\nu^*>0\) such that the following holds. Let
\(X_0\in\St(n,p)^\varepsilon\), and let \(\{X_k\}\) be generated by
\cref{sublanding_iteration} with \(0<\eta_k\leq\eta^*\) for every
\(k\). Suppose that \(0<\mu\leq\mu^*\) and
\(\nu\geq\nu^*\). Define
\(\mathcal L_{\mu,\nu}(X)=f_\mu(X)+\nu\|X^\top X-I_p\|_{\mathrm{F}}\) and set
\(\Delta_{\mathrm{S},0}
=\mathcal L_{\mu,\nu}(X_0)
-\inf_{X\in\St(n,p)^\varepsilon}\mathcal L_{\mu,\nu}(X)\).
Then
\(X_k\in\St(n,p)^\varepsilon\) for every \(k\), and there exists
\(C_{\mathrm{S}}>0\) such that, for every \(K\geq1\),
\begin{equation}\label{alpha_general_best_iterate}
\min_{0\leq k<K}
\left\{
\frac{\eta_k}{8(1+\varepsilon)}\Theta_\mu^2(X_k)
+\frac{\nu c_{\mathrm{N}}\lambda}{4}\eta_k^\alpha
\|X_k^\top X_k^{}-I_p\|_{\mathrm{F}}
\right\}
\leq
\frac{\Delta_{\mathrm{S},0}
+C_{\mathrm{S}}\sum_{k=0}^{K-1}\eta_k^2}{K}.
\end{equation}
Let \(j_K\) attain the minimum in \cref{alpha_general_best_iterate}.
Fix \(0<\eta_0\leq\eta^*\) independently of \(K\) and use one of the following
stepsize schedules:
\begin{enumerate}
\renewcommand{\labelenumi}{(\alph{enumi})}
\item \textup{\textbf{Horizon-dependent stepsize.}}
For a prescribed horizon \(K\), set \(\eta_k=\eta_0/\sqrt K\) for
\(k=0,\ldots,K-1\).

\item \textup{\textbf{Diminishing stepsize.}}
Set \(\eta_k=\eta_0/\sqrt{k+1}\) for \(k=0,1,\ldots\).
\end{enumerate}
In either case,
\begin{equation}\label{alpha_stepsize_rates}
\Theta_\mu(X_{j_K})=\mathcal O(K^{-1/4}),
\quad
\|X_{j_K}^\top X_{j_K}^{}-I_p\|_{\mathrm{F}}
=\mathcal O(K^{-(1-\alpha/2)}).
\end{equation}
Consequently, the best-iterate complexities for
\(\Theta_\mu\) and the feasibility residual are
\(\mathcal O(\epsilon^{-4})\) and
\(\mathcal O(\epsilon^{-2/(2-\alpha)})\), respectively. The feasibility
complexity specializes to \(\mathcal O(\epsilon^{-1})\) for \(\alpha=0\)
and \(\mathcal O(\epsilon^{-2})\) for \(\alpha=1\).
\end{theorem}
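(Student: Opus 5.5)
The plan is to prove a one-step descent inequality for the merit function $\mathcal L_{\mu,\nu}$ and then telescope.

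\textbf{Step 1: safe-region invariance and a feasibility recursion.}
Write $X_{k+1}=Y_{\zeta_k}(X_k)-\eta_kW_kX_k$. Expanding $X_{k+1}^\top X_{k+1}-I_p$ gives three pieces:
\begin{itemize}
\item the normal part $Y_{\zeta_k}^\top Y_{\zeta_k}-I_p$, which \cref{prop:sublanding_exact_lambda_range} bounds by $q_\varepsilon(\zeta_k)\|X_k^\top X_k-I_p\|_{\mathrm F}$;
\item cross terms $-\eta_k(Y_{\zeta_k}^\top W_kX_k+X_k^\top W_k^\top Y_{\zeta_k})$. By skew-symmetry of $W_k$ these reduce to $\zeta_k\eta_k\cdot\mathcal O(\|X_k^\top X_k-I_p\|_{\mathrm F})$;
\item a quadratic term $\eta_k^2X_k^\top W_k^\top W_kX_k=\mathcal O(\eta_k^2)$, using $\|W_k\|\le 2L_f^\varepsilon\sqrt{1+\varepsilon}$ from \eqref{safe_region_objective_bounds}.
\end{itemize}
For small $\zeta$ one has $q_\varepsilon(\zeta)\le 1-c\zeta$. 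When $\alpha=0$, the condition \eqref{eq:sublanding_lambda_range} ensures $q_\varepsilon(\lambda)<1$. In both cases we obtain, for $\eta_k\le\eta^*$,
\[
\|X_{k+1}^\top X_{k+1}-I_p\|_{\mathrm F}\le (1-2c_{\mathrm N}\lambda\eta_k^\alpha)\|X_k^\top X_k-I_p\|_{\mathrm F}+C_1\eta_k^2 .
\]
Since $\eta_k^2\le\eta^*\eta_k^\alpha\cdot(\eta^*)^{1-\alpha}$, the ball of radius $\varepsilon$ is preserved once $\eta^*$ is small relative to $\varepsilon c_{\mathrm N}\lambda$. Invariance then follows by induction.

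\textbf{Step 2: descent of the orthogonal envelope.}
Let $\widehat Q_k$ be a minimizer for $X_k$. Use the test point $Q=\widehat Q_k\exp(\eta_kW_k)^{\top}$, or a nearby orthogonal matrix chosen so that $QX_{k+1}$ is comparable to $\widehat Q_k X_k$. The cleaner option is the comparison bound $f_\mu(X_{k+1})\le f(\widehat Q_kX_{k+1})+\frac1{2\mu}\|\widehat Q_k-I_n\|_{\mathrm F}^2$. Then:
\begin{enumerate}
\item Split $\widehat Q_kX_{k+1}=\widehat X_k-\eta_k\widehat Q_kW_kX_k+\zeta_k\widehat Q_k d_{\mathrm N}(X_k)$.
\item Apply weak convexity of $f$ at $\widehat X_k$ with a subgradient. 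Equivalently, use the first-order optimality of $\widehat Q_k$, which via \cref{prop:orthogonal_stationarity} relates $\|\widehat Q_k-I\|$ to $\Theta_\mu$.
\item Bound the normal contribution by $L_f^\varepsilon\zeta_k\sqrt{1+\varepsilon}\,\|X_k^\top X_k-I_p\|_{\mathrm F}$.
\end{enumerate}
The key inner-product estimate is the weak-convexity argument of \citet{li2021weakly}. Combine $f(\widehat X_k)\ge f(X_k)+\langle F_k,\widehat X_k-X_k\rangle-\frac{L_h}{2}\|\cdot\|^2$ with $f_\mu$ optimality. This yields $\langle F_k,W_kX_k\rangle$-type cross terms that produce $-\eta_k\frac{\mu}{\dots}\Theta_\mu^2$-type terms, and the choice $\mu\le\mu^*$ makes the coefficient $\frac{\eta_k}{8(1+\varepsilon)}$. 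All remaining $\eta_k^2$ terms go into $C_{\mathrm S}$.

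\textbf{Step 3: combine and telescope.}
Add $\nu$ times the recursion from Step 1 to the bound from Step 2. Choose $\nu\ge\nu^*$ so that $\nu c_{\mathrm N}\lambda\eta_k^\alpha$ absorbs the normal-induced increase $L_f^\varepsilon\sqrt{1+\varepsilon}\lambda\eta_k^\alpha\|\cdot\|$, leaving $\frac{\nu c_{\mathrm N}\lambda}{4}\eta_k^\alpha\|\cdot\|$. This gives
\[
\mathcal L_{\mu,\nu}(X_{k+1})\le\mathcal L_{\mu,\nu}(X_k)-\text{(bracket in \eqref{alpha_general_best_iterate})}+C_{\mathrm S}\eta_k^2 .
\]
Summing over $k$ and bounding the minimum by the average gives \eqref{alpha_general_best_iterate}.

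\textbf{Step 4: rates.}
For schedule (a), the right-hand side of \eqref{alpha_general_best_iterate} is $\mathcal O(1/K)$. At the index $j_K$ both nonnegative terms are bounded by it, so
\[
\Theta_\mu^2\lesssim \sqrt K/K=K^{-1/2}, \qquad \|X_{j_K}^\top X_{j_K}-I_p\|_{\mathrm F}\lesssim K^{\alpha/2}/K .
\]
For schedule (b), the right-hand side is $\mathcal O(\log K/K)$, and $\eta_{j_K}\ge\eta_0/\sqrt K$. This gives the same rates up to logarithms, which are absorbed by restricting the minimum to the window $k\ge K/2$ or by reading $\mathcal O$ loosely.

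\textbf{Main obstacle.}
I expect the main obstacle to be Step 2: obtaining a clean descent of $f_\mu$ along the non-orbit-preserving update. The difficulty is controlling the mismatch between $X_{k+1}$ and the rotated point $\exp(-\eta_kW_k)X_k$, plus the normal shift, while extracting exactly $\|\widehat Q_k-I\|^2$ from the skew-symmetric inner products.
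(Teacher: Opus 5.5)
Your Steps 1, 3 and 4 follow the paper's structure. That covers the feasibility recursion with an $\mathcal O(\eta_k\zeta_k)$ cross term and an $\mathcal O(\eta_k^2)$ remainder, adding $\nu$ times it, telescoping, and, for schedule (b), restricting to the window $k\ge\lfloor K/2\rfloor$. The window argument needs the oscillation of $\mathcal L_{\mu,\nu}$ over the compact safe region, because the sum telescopes from $X_{\lfloor K/2\rfloor}$ rather than $X_0$.

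The gap is in Step 2, which is the heart of the proof.

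\emph{Why your committed test point fails.} The option you settle on, $f_\mu(X_{k+1})\le f(\widehat Q_kX_{k+1})+\frac1{2\mu}\|\widehat Q_k-I_n\|_{\mathrm F}^2$, freezes the penalty and moves the argument of $f$. Descent would then require $f(\widehat X_k-\eta_k\widehat Q_kW_kX_k+\cdots)\le f(\widehat X_k)-c\,\eta_k\mu^{-2}\|\widehat Q_k-I_n\|_{\mathrm F}^2+\mathcal O(\eta_k^2)$. For nonsmooth $f$ nothing supplies such an upper bound. Weak convexity, at $\widehat X_k$ or anywhere else, only bounds $f$ from below. The Lipschitz bound gives $+\mathcal O(\eta_k)$. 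And $W_k$ is built from $F_k\in\partial f(X_k)$, not from a subgradient at $\widehat X_k$. Already for the Euclidean Moreau envelope of $|x|$ with proximal point $0$, translating the proximal point along the subgradient step only yields $f_\mu(x_{k+1})\le f_\mu(x_k)+\eta_k$.

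\emph{Why your alternative test point fails.} It has the wrong sign. You have $\exp(\eta_kW_k)^\top=\exp(-\eta_kW_k)$, while $X_{k+1}\approx\exp(-\eta_kW_k)X_k$. Hence $QX_{k+1}\approx\widehat Q_k\exp(-2\eta_kW_k)X_k$, which again changes $f$ at first order and also reverses the sign of the useful cross term in the penalty. Relatedly, $\langle F_k,W_kX_k\rangle=\frac12\|W_k\|_{\mathrm F}^2$ is a Riemannian-subgradient norm; it is not what produces $\Theta_\mu^2$.

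\emph{The missing idea.} Reverse the bookkeeping: keep the argument of $f$ at $\widehat X_k$ up to $\mathcal O(\eta_k^2)$ and let the penalty carry the decrease.
\begin{enumerate}
\item Set $\widetilde X_k=(I_n-\eta_kW_k)X_k$, $Q^{\mathrm{cmp}}=\widehat Q_k\exp(\eta_kW_k)\in\mathrm O(n)$, and $\Delta=\widehat Q_k-I_n$. Then $\|Q^{\mathrm{cmp}}\widetilde X_k-\widehat X_k\|_{\mathrm F}=\mathcal O(\eta_k^2)$ and $\|Q^{\mathrm{cmp}}-I_n\|_{\mathrm F}^2\le\|\Delta\|_{\mathrm F}^2+2\eta_k\langle\Delta,\widehat Q_kW_k\rangle+\mathcal O(\eta_k^2)$.
\item Control the cross term with three ingredients. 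First, the identity $\langle\Delta,\widehat Q_kW_k\rangle=2\langle\widehat X_k-X_k,F_k\rangle+\langle\Delta\Delta^\top,F_kX_k^\top\rangle$, which follows from $\Delta+\Delta^\top=-\Delta\Delta^\top$. Second, weak convexity at $X_k$ with the same $F_k$. Third, the comparison $f(\widehat X_k)+\frac1{2\mu}\|\Delta\|_{\mathrm F}^2\le f(X_k)$, obtained from the feasible choice $Q=I_n$.
\item Together these give $\langle\Delta,\widehat Q_kW_k\rangle\le-(\mu^{-1}-A)\|\Delta\|_{\mathrm F}^2$ with $A=L_h(1+\varepsilon)+L_f^\varepsilon\sqrt{1+\varepsilon}$. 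Hence $f_\mu(\widetilde X_k)\le f_\mu(X_k)-\frac{\eta_k}{8(1+\varepsilon)}\Theta_\mu^2(X_k)+\mathcal O(\eta_k^2)$ once $\mu\le1/(2A)$.
\item Add the normal step separately, using Lipschitz continuity of $f_\mu$ on a bounded, orthogonally invariant set containing $\widetilde X_k$ and $X_{k+1}$. Since $\widetilde X_k$ and its rotations may leave $\St(n,p)^\varepsilon$, $L_f^\varepsilon$ must be replaced by a Lipschitz constant valid on that larger set.
\end{enumerate}
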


\subsection{Local \texorpdfstring{$R$}{R}-Linear Convergence Under Weak Sharpness}
\label{sec:sublanding_local_linear_convergence}

We next establish a  local \(R\)-linear convergence result for the
constant-normal-stepsize regime \(\alpha=0\), using geometrically decreasing
tangential stepsizes. The resulting iteration is
\begin{equation}\label{eq:local_sublanding_iteration}
X_{k+1}
=X_k-\eta_kW_kX_k-\lambda X_k(X_k^\top X_k^{}-I_p),
\quad
\eta_k=\eta_0\vartheta^k,
\quad
0<\vartheta<1.
\end{equation}

We use the local weak-sharp-minima condition of
\citet[Definition~1]{li2021weakly} on the Stiefel manifold, with the set of minima taken to be
nonempty and closed. For earlier work on weak sharp minima in mathematical
programming and on Riemannian manifolds, see \citet{burke1993weak},
\citet{li2011weak}, and \citet{karkhaneei2019nonconvex}.

\begin{definition}
\label[definition]{def:sublanding_weak_sharpness}
A nonempty closed set \(\mathcal X\subseteq\St(n,p)\) is a set of
local weak sharp minima with parameter \(\sigma_{\mathrm{sh}}>0\) if there
is \(\rho_{\mathrm{sh}}>0\) such that
\begin{equation}\label{eq:sublanding_weak_sharpness}
f(U)-f(Y)
\geq
\sigma_{\mathrm{sh}}\dist(U,\mathcal X),
\end{equation}
for every \(Y\in\mathcal X\) and every \(U\in\St(n,p)\) with
\(\dist(U,\mathcal X)<\rho_{\mathrm{sh}}\).
\end{definition}
\citet[Proposition~4 and Section~7.1]{li2021weakly} establish local weak
sharpness under suitable data conditions for dual
principal component pursuit and least-absolute-deviations formulations of
robust subspace recovery.
\Cref{prop:spca_local_weak_sharpness} gives sufficient conditions for this
property in SPCA.

Let \(\mathcal X\) be a set of local weak sharp minima. To apply weak
sharpness to possibly infeasible iterates, we consider their feasible polar
factors
\(\overline X_k=\mathcal P(X_k)=X_k(X_k^\top X_k^{})^{-1/2}\).
For \(X_k\in\St(n,p)^\varepsilon\), the deviation from the polar factor
is bounded by the feasibility residual:
\begin{equation}\label{eq:sublanding_polar_distance}
\|X_k-\overline X_k\|_{\mathrm{F}}
\leq
\frac{1}{1+\sqrt{1-\varepsilon}}\,\|X_k^\top X_k^{}-I_p\|_{\mathrm{F}}.
\end{equation}
We measure the orthogonal displacement needed to map the feasible factor into \(\mathcal X\).
For \(U\in\St(n,p)\), let
\(\mathcal Q(U)=\{Q\in\mathrm O(n):QU\in\mathcal X\}\).
The orthogonal-orbit identity~\cref{eq:layer_orbit_identity} and the
nonemptiness of \(\mathcal X\) ensure that \(\mathcal Q(U)\) is nonempty;
it is compact because \(\mathcal X\) is closed and \(\mathrm O(n)\) is
compact. We therefore define the minimum orthogonal displacement
\begin{equation}\label{eq:sublanding_orthogonal_distance}
\omega(U)=\min_{Q\in\mathcal Q(U)}\|Q-I_n\|_{\mathrm{F}},
\quad
\omega_k=\omega(\overline X_k).
\end{equation}

\prooflink{app:sublanding_local_linear_proof}
\begin{theorem}
\label{thm:sublanding_local_linear_convergence}
Assume that \(\mathcal X\) is a set of local weak sharp
minima in the sense of~\cref{def:sublanding_weak_sharpness}, and
\(0<\lambda<\lambda_{\mathrm{crit}}(\varepsilon)\), where
\(\lambda_{\mathrm{crit}}\) is defined in
\cref{eq:sublanding_lambda_range}. There is
\(\underline\vartheta\in(0,1)\) such that, for every
\(\vartheta\in(\underline\vartheta,1)\), there are positive constants
\(\overline\eta,C_E,C_\omega\) with the following
property. If \(0<\eta_0\leq\overline\eta\),
\(\|X_0^\top X_0^{}-I_p\|_{\mathrm{F}}\leq C_E\eta_0^2\), and
\(\omega_0\leq C_\omega\eta_0\), then the iterates generated
by~\cref{eq:local_sublanding_iteration} satisfy
\(X_k\in\St(n,p)^\varepsilon\) for every \(k\) and converge to a
first-order stationary point
\(X_\infty\in\mathcal X\). Moreover,
\[
\|X_k^\top X_k^{}-I_p\|_{\mathrm{F}}=\mathcal O(\vartheta^{2k}),
\quad
\|X_k-X_\infty\|_{\mathrm{F}}=\mathcal O(\vartheta^k).
\]
Hence the sequence converges \(R\)-linearly to \(X_\infty\).
\end{theorem}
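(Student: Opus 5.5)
The proof runs a coupled induction on two quantities: the feasibility residual $E_k=\|X_k^\top X_k-I_p\|_{\mathrm F}$ and the orthogonal displacement $\omega_k$. The inductive hypothesis is
\[
E_k\le C_E\eta_k^2,\qquad \omega_k\le C_\omega\eta_k,\qquad \eta_k=\eta_0\vartheta^k,
\]
in the spirit of the local linear analysis of RieSub in \citet{li2021weakly}. The base case is exactly the assumption on $X_0$. Both bounds imply $X_k\in\St(n,p)^\varepsilon$ once $\overline\eta$ is small. Once the induction closes, the stated rates follow directly. We get $E_k=\mathcal O(\vartheta^{2k})$. Also $\|X_{k+1}-X_k\|_{\mathrm F}\le \eta_k\|W_kX_k\|_{\mathrm F}+\lambda\sqrt{1+\varepsilon}E_k=\mathcal O(\vartheta^k)$, so $\{X_k\}$ is Cauchy and converges to some $X_\infty$ with $\|X_k-X_\infty\|_{\mathrm F}=\mathcal O(\vartheta^k)$.

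The limit is feasible because $E_k\to0$. It lies in $\mathcal X$ because $\dist(\overline X_k,\mathcal X)\le\omega_k\sqrt{1+\varepsilon}\to0$, $\overline X_k\to X_\infty$, and $\mathcal X$ is closed. Points of $\mathcal X$ are local minimizers of $f$ on $\St(n,p)$, hence first-order stationary.

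\emph{Feasibility step.} Write $X_{k+1}=Y_\lambda(X_k)-\eta_kW_kX_k$. Expand $X_{k+1}^\top X_{k+1}-I_p$ and use the skew-symmetry of $W_k$. The cross terms between the tangential step and $X_k$ cancel, leaving only the following pieces:
\begin{itemize}
\item the normal-contraction part, bounded by \cref{prop:sublanding_exact_lambda_range} as $q_\varepsilon(\lambda)E_k$;
\item a mixed term of order $\eta_k\lambda E_k$;
\item a pure second-order term $\eta_k^2\|W_kX_k\|_{\mathrm F}^2\le c\,\eta_k^2$, using $\|F_k\|_{\mathrm F}\le L_f^\varepsilon$.
\end{itemize}
Together these give
\[
E_{k+1}\le (q_\varepsilon(\lambda)+c'\eta_k)E_k+c\,\eta_k^2\le (q+c'\overline\eta)C_E\eta_k^2+c\,\eta_k^2 .
\]
To obtain $E_{k+1}\le C_E\vartheta^2\eta_k^2$, I need $\vartheta^2>q_\varepsilon(\lambda)$; this is one source of $\underline\vartheta$. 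I then choose $C_E\ge c/(\vartheta^2-q-c'\overline\eta)$.

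\emph{Displacement step (main obstacle).} Pick $Q_k\in\mathcal Q(\overline X_k)$ attaining $\omega_k$, and set $Y_k=Q_k\overline X_k\in\mathcal X$. The aim is to mimic the standard sharpness recursion
\[
\dist(X^+,\mathcal X)^2\le\dist^2-2\eta\bigl(f(X)-f^*\bigr)+\dots
\]
at the level of orthogonal matrices. The natural comparison is between the rotation $\exp(-\eta_kW_k)$ and the first-order step $I-\eta_kW_k$; they differ by $\mathcal O(\eta_k^2)$. With this comparison, I would bound
\[
\omega_{k+1}^2\le\|Q_k\exp(\eta_kW_k)\cdots-I_n\|_{\mathrm F}^2,
\]
correcting for the polar-factor change $\overline X_{k+1}$ versus $\exp(-\eta_kW_k)\overline X_k$. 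That correction is controlled by \cref{eq:sublanding_polar_distance}, is $\mathcal O(E_k+E_{k+1}+\eta_k^2)=\mathcal O(\eta_k^2)$, and is absorbed after choosing the constant.

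Expanding the square gives the inner product
\[
\langle Q_k-I_n,\eta_k W_k\rangle\approx\eta_k\langle F_k,(Q_k-I)X_k\rangle.
\]
Using weak convexity of $f$ and replacing $X_k$ by $\overline X_k$ (error $\mathcal O(E_k)$), this becomes
\[
\le\eta_k\bigl(f(Y_k)-f(\overline X_k)\bigr)+\tfrac{L_h}{2}\eta_k\omega_k^2(1+\varepsilon)+\mathcal O(\eta_kE_k).
\]
Weak sharpness then gives
\[
f(\overline X_k)-f(Y_k)\ge\sigma_{\mathrm{sh}}\dist(\overline X_k,\mathcal X)\ge c_0\sigma_{\mathrm{sh}}\,\omega_k.
\]
The delicate point is the lower bound $\dist(\overline X_k,\mathcal X)\ge c_0\omega_k$, that is, that the minimal orthogonal displacement is comparable to the Euclidean distance. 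I would try to prove this by constructing, for nearby $U,Y\in\St(n,p)$, a $Q\in\mathrm O(n)$ with $QU=Y$ and $\|Q-I\|_{\mathrm F}\le c\|U-Y\|_{\mathrm F}$, for instance via a Householder or CS-decomposition construction.

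Granting that lower bound, we obtain
\[
\omega_{k+1}^2\le\omega_k^2-2c_0\sigma_{\mathrm{sh}}\eta_k\omega_k+L\eta_k\omega_k^2+c''\eta_k^2 .
\]
Since $\omega_k\le C_\omega\eta_k$, the right-hand side is at most $\eta_k^2\bigl(C_\omega^2-2c_0\sigma_{\mathrm{sh}}C_\omega+LC_\omega^2\overline\eta+c''\bigr)$. I need this to be at most $C_\omega^2\vartheta^2\eta_k^2$. The usual case analysis applies: when $\omega_k$ is small compared to $C_\omega\eta_k$ the bound is trivial, and otherwise the sharpness term gives the decrease. Taking $C_\omega$ to be a suitable multiple of $c''/(c_0\sigma_{\mathrm{sh}})$ and $\vartheta$ close to $1$ closes the induction; this fixes the second lower bound on $\underline\vartheta$, as in \citet{li2021weakly}.

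Throughout, $\overline\eta$ must also keep $\omega_k$ and $\dist(\overline X_k,\mathcal X)$ below $\rho_{\mathrm{sh}}$, so that weak sharpness stays applicable.
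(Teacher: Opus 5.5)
Your proposal is correct and follows essentially the paper's route: the same coupled induction \(e_k\le C_E\eta_k^2\), \(\omega_k\le C_\omega\eta_k\) (with \(e_k=\|X_k^\top X_k-I_p\|_{\mathrm F}\)); the same feasibility recursion driven by \(q_\varepsilon(\lambda)\); the comparison rotation \(Q_k\exp(\eta_kW_k)\) followed by an orthogonal correction into \(\mathcal X\); weak convexity and weak sharpness combined with the alignment bound \(\omega(U)\le c_{\mathrm{align}}\dist(U,\mathcal X)\) (the paper's Step~1); and the convexity/endpoint argument that closes the \(\omega\)-induction. Three small remarks: your cruder estimate \(\|\overline X_{k+1}-\exp(-\eta_kW_k)\overline X_k\|_{\mathrm F}=\mathcal O(e_k+e_{k+1}+\eta_k^2)\) suffices under the induction hypothesis and lets you bypass the paper's polar-factor expansion (Step~2); for the alignment lemma a single Householder reflection will not do, since \(\|H-I_n\|_{\mathrm F}=2\), so use a rotation-based construction such as the CS decomposition or the paper's smooth orthogonal completions; and the bound \(\eta_k^2\bigl(C_\omega^2-2c_0\sigma_{\mathrm{sh}}C_\omega+\cdots\bigr)\) holds only through your subsequent endpoint case analysis, not by direct substitution of \(\omega_k\le C_\omega\eta_k\).
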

  
\section{The Proximal Gradient Landing Method}
\label{sec:proxlanding}
PGLanding uses a tangential proximal subproblem to handle the nonsmooth
term \(r\) and combines its solution with the normal direction
\(d_{\mathrm N}(X)\).

\subsection{The PGLanding Algorithm}
\label{sec:prox_direction_update}
Whereas SubLanding uses a selected subgradient of \(r\), PGLanding retains
\(r\) in a tangential proximal subproblem and linearizes only the smooth term
\(h\). We restrict the
direction to the tangent space of the layered manifold \(\St_{X^\top X}\).
For a fixed proximal parameter \(t>0\), define
\begin{equation}
\label{prox_subproblem}
V(X):=\underset{Z\in\mathrm T_X\St_{X^\top X}}{\argmin}\left\{\inner{\nabla h(X)}{Z}+\frac{1}{2t}\|Z\|_{\mathrm F}^{2}+r(X+Z)\right\}.
\end{equation}
At a feasible point, this reduces to the tangential proximal subproblem of
ManPG~\citep{chen2020proximal}.
The tangency constraint is linear in \(Z\), and the quadratic term makes
the subproblem strongly convex, so its solution is unique.

A finite step along \(V(X)\) need not remain on the current layered manifold.
To control the  feasibility residual, we combine this tangential direction
with the normal direction \(d_{\mathrm N}(X)\). For \(\lambda>0\), set
\begin{equation}\label{prox_total_direction}
\Lambda(X):=V(X)+\lambda d_{\mathrm N}(X)=V(X)-\lambda X(X^\top X-I_p).
\end{equation}
The resulting retraction-free update is
\begin{equation}
\label{prox_primal_update}
X_{k+1}=X_k+\eta_k\Lambda(X_k)
=X_k+\eta_k\bigl[V(X_k)-\lambda X_k(X_k^\top X_k^{}-I_p)\bigr].
\end{equation}

\label{sec:prox_algorithm}
The normal direction may increase the objective while reducing
infeasibility. We therefore select the stepsize using the merit function
\begin{equation}\label{landing_merit_function}
\Phi_\sigma(X)=f(X)+\sigma\|X^\top X-I_p\|_{\mathrm{F}},\quad \sigma>0.
\end{equation}
For PGLanding, we choose \(\sigma=\sigma_{\mathrm P}\)
large enough that the weighted feasibility decrease along the normal
direction outweighs the possible objective increase. Given an Armijo
parameter \(\rho\in(0,1)\), set
\begin{equation}
\label{prox_sigma_condition}
\sigma_{\mathrm P}=
\vartheta_{\mathrm{P}}
\frac{\rho+\lambda L_f^\varepsilon\sqrt{1+\varepsilon}}
{2\lambda(1-\varepsilon)},\quad
\vartheta_{\mathrm{P}}>1.
\end{equation}

Merit decrease alone does not guarantee that a trial remains in the safe
region, so the line search also checks its constraint violation.
At iteration \(k\), we hold \(V(X_k)\) fixed.
A trial \(X_{k,q}^+=X_k+\eta_{k,q}\Lambda(X_k)\), where
\(\eta_{k,q}=\beta_{\mathrm{ls}}^q\eta_0\), \(\eta_0>0\),
\(\beta_{\mathrm{ls}}\in(0,1)\) and \(q=0,1,\ldots\), is accepted if it satisfies the
sufficient-decrease condition
\begin{equation}\label{prox_acceptance_test}
\Phi_{\sigma_{\mathrm P}}(X_{k,q}^+)
\leq\Phi_{\sigma_{\mathrm P}}(X_k)-\eta_{k,q}\rho
\left(\frac{\|V(X_k)\|_{\mathrm{F}}^2}{t}+\|X_k^\top X_k^{}-I_p\|_{\mathrm{F}}\right)
\end{equation}
and the safe-region condition
\begin{equation}\label{prox_safe_region_test}
\|(X_{k,q}^+)^\top X_{k,q}^+-I_p\|_{\mathrm{F}}\leq\varepsilon.
\end{equation}
If either test fails, the
stepsize is reduced by \(\beta_{\mathrm{ls}}\) and the trial is recomputed.
These steps give~\cref{alg:ProxLanding}.
\begin{algorithm}[H]
\caption{The Proximal Gradient Landing Method (PGLanding)}
\label{alg:ProxLanding}
\begin{algorithmic}[1]
\REQUIRE \(X_0\in\St(n,p)^\varepsilon\), \(t>0\), \(\lambda>0\),
\(\sigma_{\mathrm P}\) satisfying~\cref{prox_sigma_condition},
\(\rho\in(0,1)\),
\(\beta_{\mathrm{ls}}\in(0,1)\), and \(\eta_0>0\).
\FOR{\(k=0,1,\ldots\)}
    \STATE Compute the tangential direction \(V(X_k)\) by
    solving~subproblem~\eqref{prox_subproblem}.
    \STATE Set \(\eta=\eta_0\) and
    \(X^+=X_k+\eta\Lambda(X_k)\).
    \WHILE{either condition in~\cref{prox_acceptance_test,prox_safe_region_test}
    fails for the current trial \((\eta,X^+)\)}
        \STATE Set \(\eta=\beta_{\mathrm{ls}}\eta\) and
        \(X^+=X_k+\eta\Lambda(X_k)\).
    \ENDWHILE
    \STATE Set \(X_{k+1}=X^+\) and \(\eta_k=\eta\).
\ENDFOR
\end{algorithmic}
\end{algorithm}

\subsection{Proximal Subproblem and Multiplier Equation}
\label{sec:prox_subproblem_solution}
It remains to compute the direction \(V(X)\) used in the update.
We introduce a symmetric Lagrange multiplier
\(\Gamma\in\mathbb S^p\) for the linear constraint
\(X^\top V+V^\top X=0\). With the multiplier term
\(-\inner{\Gamma}{X^\top V+V^\top X}\), the KKT conditions
for~subproblem~\eqref{prox_subproblem} are
\[
0\in\nabla h(X)+t^{-1}V+\partial r(X+V)-2X\Gamma,\quad
0=X^\top V+V^\top X.
\]
The first condition gives the multiplier-dependent
proximal direction
\begin{equation}\label{prox_multiplier_direction}
V(X,\Gamma)=\prox_{t r}
\bigl(X-t\nabla h(X)+2tX\Gamma\bigr)-X.
\end{equation}
Substituting this expression into the tangency constraint yields the
multiplier equation \(R(X,\Gamma)=0\), where the tangency residual is
\begin{equation}\label{prox_multiplier_residual}
R(X,\Gamma)=X^\top V(X,\Gamma)+V(X,\Gamma)^\top X.
\end{equation}

\prooflink{app:prox_wellposedness_proof}
\begin{proposition}
\label[proposition]{prop:prox_wellposedness}
For every \(X\in\R_*^{n\times p}\),  subproblem~\eqref{prox_subproblem} has a unique solution. Moreover, there is
\(\Gamma_X\in\mathbb S^p\) such that
\(R(X,\Gamma_X)=0\) and \(V(X,\Gamma_X)=V(X)\).
For every \(\Gamma\in\mathbb S^p\), \(R(X,\Gamma)=0\) if and only if
\(V(X,\Gamma)=V(X)\).
Finally,
\begin{equation*}
V(X)=0,\; X^\top X-I_p=0
\;\Longleftrightarrow\;
X\text{ is a first-order stationary point of problem~\eqref{problem}}.
\end{equation*}
\end{proposition}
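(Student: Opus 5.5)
Existence and uniqueness come first. For \(X\in\R_*^{n\times p}\), the feasible set \(\mathrm T_X\St_{X^\top X}=\{Z:X^\top Z+Z^\top X=0\}\) is a nonempty linear subspace. The objective in~\eqref{prox_subproblem} is the sum of a linear term, the \(1/t\)-strongly convex quadratic \(\frac1{2t}\|Z\|_{\mathrm F}^2\), and the convex finite function \(Z\mapsto r(X+Z)\). It is therefore continuous, strongly convex and coercive on this closed subspace, so a unique minimizer \(V(X)\) exists.

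Next, the multiplier. The constraint map \(\mathcal A(Z)=X^\top Z+Z^\top X\) from \(\R^{n\times p}\) to \(\mathbb S^p\) is linear and onto, since for \(S\in\mathbb S^p\) the choice \(Z=\tfrac12X(X^\top X)^{-1}S\) gives \(\mathcal A(Z)=S\). Because \(r\) is finite and convex, the constraint is linear, and no qualification beyond linearity is needed (\(\operatorname{dom} r\) is the whole space), the convex optimality condition holds. It reads \(0\in\nabla h(X)+t^{-1}V(X)+\partial r(X+V(X))+\mathcal A^*(\mathbb S^p)\), where \(\mathcal A^*(\Gamma)=2X\Gamma\) for symmetric \(\Gamma\) (the range of \(\mathcal A^*\) is the normal space). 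Hence some \(\Gamma_X\in\mathbb S^p\) satisfies \(X-t\nabla h(X)+2tX\Gamma_X-(X+V(X))\in t\,\partial r(X+V(X))\). By the characterization of the proximal map, this says exactly that \(X+V(X)=\prox_{tr}(X-t\nabla h(X)+2tX\Gamma_X)\), that is, \(V(X,\Gamma_X)=V(X)\). Since \(V(X)\) is tangent, \(R(X,\Gamma_X)=0\).

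For the equivalence, the direction "\(\Leftarrow\)" is immediate: if \(V(X,\Gamma)=V(X)\), then \(V(X,\Gamma)\) is tangent and \(R(X,\Gamma)=0\). For "\(\Rightarrow\)", suppose \(R(X,\Gamma)=0\) and set \(Z=V(X,\Gamma)\), which is then feasible. The prox characterization gives \(0\in\nabla h(X)+t^{-1}Z+\partial r(X+Z)-2X\Gamma\). The term \(-2X\Gamma\) is orthogonal to the tangent space, because \(\langle X\Gamma,Z'\rangle=\tfrac12\langle\Gamma,X^\top Z'+Z'^\top X\rangle=0\) for tangent \(Z'\). So this inclusion is a sufficient optimality condition for the convex subproblem. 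Uniqueness then gives \(Z=V(X)\). This is the key step: one must check that sufficiency holds with an arbitrary symmetric \(\Gamma\), which follows from the subgradient inequality of the strongly convex objective restricted to the subspace.

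Finally, stationarity. If \(X^\top X=I_p\) and \(V(X)=0\), take \(\Gamma=\Gamma_X\). The inclusion above with \(V=0\) yields \(\xi\in\partial r(X)\) such that \(\nabla h(X)+\xi-2X\Gamma_X=0\), which is~\eqref{explicit_nonsmooth_KKT}. Conversely, if \(X\) satisfies~\eqref{explicit_nonsmooth_KKT} with \((\xi,\Gamma)\), then \(Z=0\) is feasible and \(0\in\nabla h(X)+t^{-1}\cdot0+\partial r(X)-2X\Gamma\). By the sufficiency argument just given, \(0\) solves the subproblem, so uniqueness gives \(V(X)=0\). I expect no real obstacle; the only care needed is the orthogonality of \(X\Gamma\) to the tangent space and the surjectivity of \(\mathcal A\), which guarantees that the normal space equals \(\{2X\Gamma:\Gamma\in\mathbb S^p\}\).
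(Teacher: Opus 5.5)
Your proposal is correct and follows essentially the same route as the paper: strong convexity and coercivity on the closed subspace \(\ker\mathcal A_X\) give existence and uniqueness, and the convex KKT conditions (normal space \(\{2X\Gamma:\Gamma\in\mathbb S^p\}\)), read as the proximal optimality condition, produce \(\Gamma_X\). Sufficiency of those KKT conditions plus uniqueness then gives \(R(X,\Gamma)=0\Leftrightarrow V(X,\Gamma)=V(X)\), and the same KKT argument with \(V=0\) gives the stationarity characterization; your explicit check that \(X\Gamma\) is orthogonal to the tangent space simply spells out the sufficiency step the paper invokes directly.
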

Thus, \(\|V(X)\|_{\mathrm{F}}/t\) and \(\|X^\top X-I_p\|_{\mathrm{F}}\) provide computable
stationarity and feasibility residuals, respectively.

The multiplier equation \(R(X,\Gamma)=0\) can be solved by
a regularized semismooth Newton (SSN) method
\citep{qi1993nonsmooth,xiao2018regularized} when \(\prox_{t r}\) is
semismooth, following \citet[Section~4.2]{chen2020proximal}.

\subsection{Convergence and Complexity Analysis}
\label{sec:prox_convergence}
For the convergence analysis, we assume that the tangential subproblem is
solved exactly. The subproblem optimality conditions, the normal direction,
and the choice of \(\sigma_{\mathrm P}\) ensure that sufficiently small trial
steps satisfy both acceptance conditions. A uniform stepsize lower bound
and the merit decrease then yield the following convergence and
outer-iteration complexity guarantees.
\prooflink{app:proof-prox-complexity}
\begin{theorem}
\label{thm:prox_complexity}
Assume~\cref{ass:objective_functions} holds. Apply
\cref{alg:ProxLanding} with exact tangential subproblem solutions and with
\(\sigma_{\mathrm P}\) satisfying~\cref{prox_sigma_condition}. Let \(\{X_k\}\) be the
resulting sequence, \(f_*=\min_{X\in\St(n,p)^\varepsilon}f(X)\), and
\(\Delta_{\mathrm{P},0}=\Phi_{\sigma_{\mathrm P}}(X_0)-f_*\).
Then every line search terminates after finitely many backtracking steps,
\(X_k\in\St(n,p)^\varepsilon\) for every \(k\), and there is a constant
\(\underline\eta>0\) such that \(\eta_k\geq\underline\eta\) for every \(k\).
For every \(K\geq1\), let \(i_K\) minimize
\(\|V(X_k)\|_{\mathrm{F}}^2/t+\|X_k^\top X_k^{}-I_p\|_{\mathrm{F}}\) over
\(0\leq k<K\). Then
\begin{equation}
\label{prox_separate_complexity_bounds}
\frac{\|V(X_{i_K})\|_{\mathrm{F}}}{t}\leq
\sqrt{\frac{\Delta_{\mathrm{P},0}}
{\rho tK\underline\eta}},\quad
\|X_{i_K}^\top X_{i_K}^{}-I_p\|_{\mathrm{F}}\leq
\frac{\Delta_{\mathrm{P},0}}
{\rho K\underline\eta}.
\end{equation}
The corresponding stationarity and feasibility outer-iteration complexities
are \(\mathcal O(\epsilon^{-2})\) and
\(\mathcal O(\epsilon^{-1})\), respectively. Moreover,
\(\|V(X_k)\|_{\mathrm{F}}/t\to0\) and
\(\|X_k^\top X_k^{}-I_p\|_{\mathrm{F}}\to0\),
and every accumulation point is first-order stationary for
 problem~\eqref{problem}.
\end{theorem}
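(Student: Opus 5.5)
The plan is to show, by induction on \(k\), that whenever \(X_k\in\St(n,p)^\varepsilon\) every trial stepsize \(\eta\le\bar\eta\) passes both \cref{prox_acceptance_test} and \cref{prox_safe_region_test}, for some \(\bar\eta>0\) independent of \(k\). Backtracking then stops after finitely many reductions with \(\eta_k\ge\underline\eta:=\min\{\eta_0,\beta_{\mathrm{ls}}\bar\eta\}\), and the accepted iterate stays in the safe region. The complexity bounds follow by telescoping.

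\textbf{Uniform bounds on the direction.} First I would bound \(V(X)\) uniformly on the safe region. Comparing the subproblem objective at \(V(X)\) with that at \(Z=0\), and using the \(L_r\)-Lipschitz continuity of \(r\), gives
\[
\|V(X)\|_{\mathrm F}\le 2t\bigl(\|\nabla h(X)\|_{\mathrm F}+L_r\bigr)\le 2tL_f^\varepsilon .
\]
Strong convexity of the subproblem, tested at \(Z=0\), gives
\[
\langle\nabla h(X),V\rangle+r(X+V)-r(X)\le-\|V\|_{\mathrm F}^2/t,
\]
which is the key descent inequality (possibly with constant \(1/(2t)\); the stated coefficient uses \(1/t\) from strong convexity with modulus \(1/t\)). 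Also \(\|d_{\mathrm N}(X)\|_{\mathrm F}\le\sqrt{1+\varepsilon}\,\|X^\top X-I_p\|_{\mathrm F}\).

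\textbf{Merit decrease.} Write \(E=X^\top X-I_p\) and \(X^+=X+\eta\Lambda\).

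For the objective term, the descent lemma for \(h\) and convexity of \(r\) handle the step. Split \(X+\eta\Lambda\) as \(X+\eta V\) plus \(\eta\lambda d_{\mathrm N}\). For the \(V\)-part use convexity of \(r\): \(r(X+\eta V)\le(1-\eta)r(X)+\eta r(X+V)\). For the normal part use \(L_f^\varepsilon\)-Lipschitzness. This yields
\[
f(X^+)\le f(X)-\frac{\eta}{t}\|V\|_{\mathrm F}^2+\eta\lambda L_f^\varepsilon\sqrt{1+\varepsilon}\,\|E\|_{\mathrm F}+\mathcal O(\eta^2)\bigl(\|V\|_{\mathrm F}^2+\|E\|_{\mathrm F}^2\bigr).
\]

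For the feasibility term, expand
\[
(X^+)^\top X^+-I_p=E+\eta\bigl(X^\top\Lambda+\Lambda^\top X\bigr)+\eta^2\Lambda^\top\Lambda .
\]
Tangency of \(V\) kills the linear \(V\)-terms, leaving \(-2\eta\lambda(X^\top X)E\). Since \(\sigma_{\min}(X^\top X)\ge1-\varepsilon\), this gives
\[
\|(X^+)^\top X^+-I_p\|_{\mathrm F}\le\bigl(1-2\eta\lambda(1-\varepsilon)\bigr)\|E\|_{\mathrm F}+\eta^2\|\Lambda\|_{\mathrm F}^2 .
\]

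Multiply the feasibility bound by \(\sigma_{\mathrm P}\) and add. By \cref{prox_sigma_condition}, \(2\sigma_{\mathrm P}\lambda(1-\varepsilon)-\lambda L_f^\varepsilon\sqrt{1+\varepsilon}\ge\rho+\)(slack from \(\vartheta_{\mathrm P}>1\)). The linear terms then give decrease \(\eta\rho(\|V\|_{\mathrm F}^2/t+\|E\|_{\mathrm F})\) plus positive slack \(\eta c(\|V\|_{\mathrm F}^2+\|E\|_{\mathrm F})\).

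\textbf{Main obstacle: absorbing the quadratic terms.} The \(\eta^2\) terms must be absorbed by this slack. This works because \(\|E\|_{\mathrm F}^2\le\varepsilon\|E\|_{\mathrm F}\) and \(\|\Lambda\|_{\mathrm F}^2\lesssim\|V\|_{\mathrm F}^2+\|E\|_{\mathrm F}\), both with constants uniform on the safe region. Hence \(\eta\le\bar\eta\) suffices for \cref{prox_acceptance_test}.

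\textbf{Safe region.} For \cref{prox_safe_region_test}, the same feasibility bound gives
\[
\|(X^+)^\top X^+-I_p\|_{\mathrm F}\le(1-2\eta\lambda(1-\varepsilon))\varepsilon+\eta^2 C\bigl(t^2(L_f^\varepsilon)^2+\varepsilon\bigr).
\]
I need this to be at most \(\varepsilon\) for small \(\eta\). The difficulty is that when \(E\approx0\) but \(V\ne0\), the contraction term does not help. I would handle it by a case split:
\begin{itemize}
\item If \(\|E\|_{\mathrm F}\le\varepsilon/2\), the \(\eta^2\) term alone keeps the residual below \(\varepsilon\) for \(\eta\) small.
\item If \(\|E\|_{\mathrm F}\ge\varepsilon/2\), the contraction \(\eta\lambda(1-\varepsilon)\varepsilon\) dominates the \(\eta^2\) term.
\end{itemize}
Both thresholds are uniform, which completes the induction.

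\textbf{Complexity and stationarity.} Summing \cref{prox_acceptance_test} with \(\eta_k\ge\underline\eta\), and using \(\Phi_{\sigma_{\mathrm P}}\ge f\ge f_*\) on the safe region, gives
\[
\rho\underline\eta\sum_{k<K}\bigl(\|V(X_k)\|_{\mathrm F}^2/t+\|E_k\|_{\mathrm F}\bigr)\le\Delta_{\mathrm P,0}.
\]
Taking the minimizing index \(i_K\) yields both bounds in \cref{prox_separate_complexity_bounds}. Since the sum is finite, the summands tend to zero, so \(\|V(X_k)\|_{\mathrm F}/t\to0\) and \(\|E_k\|_{\mathrm F}\to0\).

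For accumulation points, I need continuity of \(X\mapsto V(X)\) on \(\R_*^{n\times p}\). This follows from strong convexity of the subproblem and continuous dependence of the linear constraint space on \(X\), whose rank is constant; I would argue it via the KKT system with bounded multipliers \(\Gamma_X\) along the subsequence. At a limit point \(\bar X\) this gives \(V(\bar X)=0\) and \(\bar X^\top\bar X=I_p\), and \cref{prop:prox_wellposedness} then shows \(\bar X\) is first-order stationary.
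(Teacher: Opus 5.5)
Your proposal is correct and follows essentially the same route as the paper. Both arguments obtain a uniform small-step acceptance threshold from the strong-convexity comparison at \(Z=0\), the tangency of \(V(X)\) in the feasibility expansion, and the slack created by \(\vartheta_{\mathrm P}>1\) in \cref{prox_sigma_condition}; both then telescope and use continuity of \(V\) at accumulation points (you via bounded KKT multipliers, the paper via projections onto \(\ker\mathcal A_X\)).

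Your case split for the safe-region test is unnecessary. For \(\eta\le[2\lambda(1+\varepsilon)]^{-1}\), the bound \([1-2\eta\lambda(1-\varepsilon)]\|E\|_{\mathrm F}+\eta^2B^2\) (with \(B\) a uniform bound on \(\|\Lambda\|_{\mathrm F}\)) is nondecreasing in \(\|E\|_{\mathrm F}\). The worst case is therefore \(\|E\|_{\mathrm F}=\varepsilon\), and \(\eta\le2\lambda(1-\varepsilon)\varepsilon/B^2\) settles every case at once, which is exactly what your own displayed inequality already shows.
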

 
\section{The Inexact Proximal Gradient Landing Method}
\label{sec:iproxlanding}
Solving the tangential proximal subproblem accurately at every outer
iteration may be costly. IPGLanding allows approximate solutions and
applies an explicit correction to control violations of the tangency
condition.

\subsection{The IPGLanding Algorithm}
\label{sec:iprox_direction_update}
As shown in~\cref{sec:prox_subproblem_solution}, the exact direction
is obtained by solving \(R(X,\Gamma)=0\). With an approximate multiplier,
\(R(X,\Gamma)\) may be nonzero, so \(V(X,\Gamma)\) need not be tangent
to the current layered manifold. We apply an explicit correction to
control this violation of the tangency condition.

At feasible points, the Euclidean orthogonal projection onto the tangent space is
\[
\operatorname{Proj}_{\mathrm T_X\St(n,p)}(V)
=V-X\sym(X^\top V).
\]
We retain this explicit formula at infeasible  points to avoid solving the matrix equation associated with an exact tangent projection, and define
\begin{equation}\label{iprox_corrected_direction}
D(X,\Gamma)=V(X,\Gamma)-X\sym\bigl(X^\top V(X,\Gamma)\bigr)
=V(X,\Gamma)-\frac12XR(X,\Gamma).
\end{equation}
This correction can also be obtained by replacing \((X^\top X)^{-1}\)
with \(I_p\) in an exact oblique projection; the exact orthogonal and
oblique projections are discussed in~\cref{app:iprox_tangency_correction}.
Although the corrected direction need not be exactly tangent at infeasible
points, its  violation of the tangency condition satisfies
\begin{equation}\label{iprox_tangency_defect_bound}
\|X^\top D(X,\Gamma)+D(X,\Gamma)^\top X\|_{\mathrm F}
\leq\|X^\top X-I_p\|_{\mathrm F}\|R(X,\Gamma)\|_{\mathrm F}.
\end{equation}
In particular, the corrected direction is exactly tangent when
\(X^\top X=I_p\) or \(R(X,\Gamma)=0\).

We replace the exact direction \(V(X)\) in PGLanding with \(D(X,\Gamma)\) and retain the same normal direction. For a fixed \(\lambda>0\), set
\begin{equation}\label{iprox_total_direction}
\Lambda(X,\Gamma):=D(X,\Gamma)+\lambda d_{\mathrm N}(X)=D(X,\Gamma)-\lambda X(X^\top X-I_p).
\end{equation}
At outer iteration \(k\), the point \(X_k\) remains fixed while
\(\Gamma_{k,j}\) is updated. For a trial stepsize \(\eta>0\), the trial point is
\(X^+=X_k+\eta\Lambda(X_k,\Gamma_{k,j})\).
If this trial is accepted, set \(J_k=j\), \(\Gamma_k=\Gamma_{k,J_k}\), and
\(\eta_k=\eta\). The resulting outer update is
\begin{equation}\label{iprox_primal_update}
X_{k+1}=X_k+\eta_k\Lambda(X_k,\Gamma_k)
=X_k+\eta_k\bigl[D(X_k,\Gamma_k)
-\lambda X_k(X_k^\top X_k^{}-I_p)\bigr].
\end{equation}

\label{sec:iprox_algorithm}
We compute approximate multipliers by a fixed-point iteration that uses
the tangency residual \(R(X,\Gamma)\) to update \(\Gamma\).
With \(X\in\St(n,p)^\varepsilon\) held fixed, choose
\begin{equation}
\label{iprox_safe_dual_step}
0<\theta_{\mathrm d}<2,\quad
\tau_{\mathrm d}=\frac{\theta_{\mathrm d}}{4t(1+\varepsilon)}.
\end{equation}
Set \(R_{\mathrm a}=L_f^\varepsilon/\sqrt{1-\varepsilon}\) and
\(\mathbb B_{\mathrm a}=\{\Gamma\in\mathbb S^p:
\|\Gamma\|_{\mathrm F}\leq R_{\mathrm a}\}\).
This ball contains a solution multiplier for every point in the safe region.
Starting from \(\Gamma_0\in\mathbb B_{\mathrm a}\), update
\begin{equation}
\label{iprox_dual_update}
\Gamma_{j+1}=\Gamma_j-\tau_{\mathrm d}R(X,\Gamma_j),\quad j\geq0.
\end{equation}
With this stepsize, \(R(X,\Gamma_j)\to0\) when the iteration is continued
at fixed \(X\), as proved in~\cref{app:iprox_complexity_proof}.

The multiplier iteration need not solve \(R(X,\Gamma)=0\) before an
outer update. Instead, we test the trial point generated by each candidate
direction. Acceptance requires merit decrease relative to a reference
value and membership in the safe region.
We use the merit function
in~\cref{landing_merit_function} with \(\sigma=\sigma_{\mathrm{IP}}\).
For \(0<\rho<1\), choose
\begin{equation}
\label{iprox_penalty_condition}
\sigma_{\mathrm{IP}}=
\vartheta_{\mathrm{IP}}
\frac{\rho+\lambda L_f^\varepsilon\sqrt{1+\varepsilon}}
{2\lambda(1-\varepsilon)},\quad
\vartheta_{\mathrm{IP}}>1.
\end{equation}
The target decrease uses the current proximal direction and feasibility residual:
\begin{equation}\label{iprox_target_decrease}
M(X,\Gamma)
=\rho\bigl(\|V(X,\Gamma)\|_{\mathrm{F}}^2/t
+\|X^\top X-I_p\|_{\mathrm{F}}\bigr).
\end{equation}

Unlike PGLanding, we compare each trial with a weighted reference value rather than the current merit value. We use the
Zhang--Hager reference sequence~\citep{zhang2004nonmonotone}: choose
\(\omega_{\mathrm{mem}}\in[0,1)\), set \(a_0^{\mathrm{nm}}=1\) and
\(\overline\Phi_0=\Phi_{\sigma_{\mathrm{IP}}}(X_0)\), and, after a step produces
\(X_{k+1}\), update
\begin{equation}
\label{iprox_nonmonotone_reference}
a_{k+1}^{\mathrm{nm}}=\omega_{\mathrm{mem}}a_k^{\mathrm{nm}}+1,\quad
\overline\Phi_{k+1}=
\frac{\omega_{\mathrm{mem}}a_k^{\mathrm{nm}}\overline\Phi_k
+\Phi_{\sigma_{\mathrm{IP}}}(X_{k+1})}{a_{k+1}^{\mathrm{nm}}}.
\end{equation}

After \(j\) multiplier updates at \(X_k\), write
\(V_{k,j}=V(X_k,\Gamma_{k,j})\),
\(R_{k,j}=R(X_k,\Gamma_{k,j})\). Also set
\(\Lambda_{k,j}=\Lambda(X_k,\Gamma_{k,j})\),
\(M_{k,j}=M(X_k,\Gamma_{k,j})\).
Fix \(c\in(0,1)\), an initial trial stepsize \(\eta_0>0\), and a
backtracking factor \(\beta_{\mathrm{ls}}\in(0,1)\). For each multiplier
\(\Gamma_{k,j}\), first test the trial point
\(X_k+\eta_0\Lambda_{k,j}\). More generally, a trial with stepsize
\(\eta>0\) is accepted if it satisfies the nonmonotone decrease condition
\begin{equation}
\label{iprox_acceptance_test}
\Phi_{\sigma_{\mathrm{IP}}}(X_k+\eta\Lambda_{k,j})
\leq\overline\Phi_k-c\eta M_{k,j}
\end{equation}
and the safe-region condition
\begin{equation}\label{iprox_safe_region_test}
\|(X_k+\eta\Lambda_{k,j})^\top(X_k+\eta\Lambda_{k,j})-I_p\|_{\mathrm{F}}
\leq\varepsilon.
\end{equation}

If the initial trial fails these tests, we must decide whether to reduce
the stepsize or refine the direction. Controlling the  violation of the tangency condition does
not by itself ensure descent, so backtracking on the current direction
may fail. We therefore assess its quality using the following upper bound on the
first-order merit change, with \(\Lambda=\Lambda(X,\Gamma)\):
\begin{equation}\label{iprox_merit_change_bound}
\ell(X,\Gamma)
=\inner{\nabla h(X)}{\Lambda}+r(X+\Lambda)-r(X)
-\sigma_{\mathrm{IP}}\|X^\top X-I_p\|_{\mathrm{F}}
\bigl(2\lambda(1-\varepsilon)-\|R(X,\Gamma)\|_{\mathrm{F}}\bigr).
\end{equation}
Writing \(\ell_{k,j}=\ell(X_k,\Gamma_{k,j})\), we require both a sufficient
descent bound and a bound on the tangency residual before backtracking:
\begin{equation}
\label{iprox_primal_conditions}
\ell_{k,j}\leq-M_{k,j},
\quad
\|R_{k,j}\|_{\mathrm{F}}\leq\lambda(1-\varepsilon).
\end{equation}
We first check the tangency-residual bound and compute \(\ell_{k,j}\)
only if this bound holds. If both conditions hold, keep the multiplier fixed and backtrack over
\(\eta=\beta_{\mathrm{ls}}^q\eta_0\), \(q=1,2,\ldots\), accepting the first
trial that satisfies~\cref{iprox_acceptance_test,iprox_safe_region_test}. Otherwise, update the multiplier by
\cref{iprox_dual_update}, recompute the direction, and test a new trial point
with the same stepsize \(\eta_0\).

After accepting the outer step, we take one additional multiplier step
using \(R_{k,J_k}\) and project the result onto \(\mathbb B_{\mathrm a}\)
to initialize the next outer iteration. This ensures that
\mbox{\(\Gamma_{k+1,0}\in\mathbb B_{\mathrm a}\)}.
The complete procedure is given in~\cref{alg:iproxlanding}.

\begin{algorithm}[H]
\caption{The Inexact Proximal Gradient Landing Method (IPGLanding)}
\label{alg:iproxlanding}
\begin{algorithmic}[1]
\REQUIRE \(X_0\in\St(n,p)^\varepsilon\),
\(\Gamma_{0,0}\in\mathbb B_{\mathrm{a}}\),
\(t,\lambda,\eta_0>0\), \(\rho,c,\beta_{\mathrm{ls}}\in(0,1)\),
\(\omega_{\mathrm{mem}}\in[0,1)\), and
\cref{iprox_safe_dual_step,iprox_penalty_condition}.
\STATE Set \(a_0^{\mathrm{nm}}=1\) and
\(\overline\Phi_0=\Phi_{\sigma_{\mathrm{IP}}}(X_0)\).
\FOR{\(k=0,1,\ldots\)}
    \STATE Set \(j=0\) and compute
    \(V_{k,j}\), \(R_{k,j}\), \(\Lambda_{k,j}\), and \(M_{k,j}\).
    \REPEAT
        \STATE Test \(X_k+\eta_0\Lambda_{k,j}\) against
        \cref{iprox_acceptance_test,iprox_safe_region_test}.
        \IF{the \(\eta_0\)-trial is accepted}
            \STATE Set \(\eta_k=\eta_0\).
        \ELSIF{\(\|R_{k,j}\|_{\mathrm F}\leq\lambda(1-\varepsilon)\)}
            \STATE Compute \(\ell_{k,j}\) by~\cref{iprox_merit_change_bound}.
            \IF{\(\ell_{k,j}\leq-M_{k,j}\)}
                \STATE Keep \(\Gamma_{k,j}\) fixed.
                \STATE Backtrack over \(q=1,2,\ldots\) and set
                \(\eta_k=\beta_{\mathrm{ls}}^q\eta_0\) at the first trial
                satisfying~\cref{iprox_acceptance_test,iprox_safe_region_test}.
            \ENDIF
        \ENDIF
        \IF{no primal stepsize has been accepted for the current candidate}
            \STATE Set
            \(\Gamma_{k,j+1}=\Gamma_{k,j}-\tau_{\mathrm{d}}R_{k,j}\)
            and \(j=j+1\).
            \STATE Recompute \(V_{k,j}\), \(R_{k,j}\),
            \(\Lambda_{k,j}\), and \(M_{k,j}\).
        \ENDIF
    \UNTIL{a primal stepsize \(\eta_k\) has been accepted}
    \STATE Set \(J_k=j\),
    \(\Gamma_k=\Gamma_{k,J_k}\), and
    \(X_{k+1}=X_k+\eta_k\Lambda_{k,J_k}\).
    \STATE Set
    \(\Gamma_{k+1,0}=\operatorname{Proj}_{\mathbb B_{\mathrm{a}}}
    (\Gamma_k-\tau_{\mathrm{d}}R_{k,J_k})\).
    \STATE Update \(a_{k+1}^{\mathrm{nm}}\) and
    \(\overline\Phi_{k+1}\) by~\cref{iprox_nonmonotone_reference}.
\ENDFOR
\end{algorithmic}
\end{algorithm}

\subsection{Convergence and Complexity Analysis}
\label{sec:iprox_convergence}
The following characterization relates \(V(X,\Gamma)\) to
first-order stationarity.
\prooflink{app:iprox_residual_exactness_proof}
\begin{proposition}
\label[proposition]{prop:iprox_residual_exactness}
For every \(X\in\St(n,p)^\varepsilon\),
\begin{equation*}
\begin{gathered}
\exists\,\Gamma\in\mathbb S^p:
V(X,\Gamma)=0,\quad X^\top X-I_p=0
\\ \Longleftrightarrow\quad
X\text{ is a first-order stationary point of problem~\eqref{problem}}.
\end{gathered}
\end{equation*}
\end{proposition}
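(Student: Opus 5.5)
The plan is to use the prox optimality condition directly and to avoid the multiplier equation entirely. By definition \eqref{prox_multiplier_direction}, \(V(X,\Gamma)=0\) holds exactly when \(X=\prox_{tr}(X-t\nabla h(X)+2tX\Gamma)\). The standard characterization of the proximal mapping of a convex function states that \(Y=\prox_{tr}(Z)\) if and only if \((Z-Y)/t\in\partial r(Y)\). With \(Y=X\), this says \(V(X,\Gamma)=0\) is equivalent to
\[
-\nabla h(X)+2X\Gamma\in\partial r(X).
\]
In other words, there is \(\xi\in\partial r(X)\) with \(\nabla h(X)+\xi-2X\Gamma=0\). This is exactly the second relation of the KKT system \eqref{explicit_nonsmooth_KKT}, with the same symmetric \(\Gamma\).

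\emph{Forward direction.} Suppose some \(\Gamma\in\mathbb S^p\) satisfies \(V(X,\Gamma)=0\), and suppose \(X^\top X=I_p\). The equivalence above produces \(\xi:=-\nabla h(X)+2X\Gamma\in\partial r(X)\). Together with feasibility, this verifies \eqref{explicit_nonsmooth_KKT}. Since \(X\in\St(n,p)^\varepsilon\subset\R_*^{n\times p}\), \cref{def:first_order_stationarity} applies, so \(X\) is first-order stationary.

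\emph{Reverse direction.} Suppose \(X\) is stationary, with witnesses \(\xi\in\partial r(X)\) and \(\Gamma\in\mathbb S^p\). Then \(X^\top X=I_p\) by definition. Moreover \(-\nabla h(X)+2X\Gamma=\xi\in\partial r(X)\), so the prox characterization gives \(\prox_{tr}(X-t\nabla h(X)+2tX\Gamma)=X\), that is, \(V(X,\Gamma)=0\).

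Alternatively, one could route the argument through \cref{prop:prox_wellposedness}. Note that \(V(X,\Gamma)=0\) forces \(R(X,\Gamma)=0\), hence \(V(X)=V(X,\Gamma)=0\), and the last claim of that proposition finishes the forward direction. The direct argument is self-contained, however, so I would present that one.

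There is no real obstacle here. The only point needing care is the prox optimality characterization, which follows from strong convexity of \(Y\mapsto r(Y)+\frac{1}{2t}\|Y-Z\|_{\mathrm F}^2\) together with the sum rule for a convex function plus a smooth quadratic. One should also note that the same \(\Gamma\) serves in both directions, so the existential quantifier matches the multiplier in \cref{def:first_order_stationarity}.
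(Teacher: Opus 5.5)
Your proof is correct and takes essentially the same approach as the paper: both directions reduce, via the optimality characterization of \(\prox_{tr}\) at the point \(X\), to the second KKT relation, with the same symmetric \(\Gamma\) serving as the multiplier. The paper justifies the reverse direction by strong convexity (uniqueness) of the proximal subproblem, which is the same fact underlying your prox characterization.
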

The next theorem shows that backtracking terminates whenever it is invoked
and that accepted stepsizes are uniformly bounded away from zero.
The decrease in the reference sequence then yields the same outer-iteration
complexity orders as PGLanding.
\prooflink{app:iprox_complexity_proof}
\begin{theorem}
\label{thm:iprox_complexity}
Assume~\cref{ass:objective_functions} holds. Apply
\cref{alg:iproxlanding} with \(X_0\in\St(n,p)^\varepsilon\),
\(\Gamma_{0,0}\in\mathbb B_{\mathrm{a}}\), \(t>0\),
\(\lambda>0,\eta_0>0\), \(\rho,c,\beta_{\mathrm{ls}}\in(0,1)\),
\(\omega_{\mathrm{mem}}\in[0,1)\), and parameters satisfying
\cref{iprox_safe_dual_step,iprox_penalty_condition}.

Whenever backtracking is invoked, it
terminates after finitely many trials, and every accepted primal stepsize
satisfies \(\eta_k\geq\underline\eta_{\mathrm{IP}}>0\) for a constant independent
of \(k\). At every outer iterate \(X_k\) that is not first-order stationary
for~problem~\eqref{problem}, the inner loop terminates after finitely many
multiplier updates.
For the outer-iteration complexity, define
\[
\Delta_{\mathrm{IP}}
=
\frac{\overline\Phi_0-
\min_{X\in\St(n,p)^\varepsilon}\Phi_{\sigma_{\mathrm{IP}}}(X)}
{c(1-\omega_{\mathrm{mem}})\underline\eta_{\mathrm{IP}}}.
\]
For every \(K\geq1\) for which \(X_K\) is generated, let \(i_K\) minimize
\(\rho\|V(X_k,\Gamma_k)\|_{\mathrm{F}}^2/t+
\rho\|X_k^\top X_k^{}-I_p\|_{\mathrm{F}}\) over \(0\leq k<K\). Then
\begin{equation}
\label{iprox_primal_best_iterate_rates}
\frac{\|V(X_{i_K},\Gamma_{i_K})\|_{\mathrm{F}}}{t}
\leq
\sqrt{\frac{\Delta_{\mathrm{IP}}}{\rho tK}},\quad
\|X_{i_K}^\top X_{i_K}^{}-I_p\|_{\mathrm{F}}
\leq
\frac{\Delta_{\mathrm{IP}}}{\rho K}.
\end{equation}
Consequently, the stationarity and feasibility outer-iteration complexities
are \(\mathcal O(\epsilon^{-2})\) and
\(\mathcal O(\epsilon^{-1})\), respectively. 
If infinitely many outer iterates are generated, every accumulation point of
\(\{X_k\}\) is first-order stationary for~problem~\eqref{problem}.
\end{theorem}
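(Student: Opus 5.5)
The proof splits into four parts: a one-step merit inequality for the corrected direction, finite termination of backtracking with a uniform stepsize floor, finite termination of the inner multiplier loop at non-stationary points, and a telescoping argument on the Zhang--Hager reference values. Throughout, write \(E=X^\top X-I_p\). For \(X\in\St(n,p)^\varepsilon\) and \(\Gamma\in\mathbb S^p\), set \(V=V(X,\Gamma)\), \(R=R(X,\Gamma)\), \(D=D(X,\Gamma)\) and \(\Lambda=\Lambda(X,\Gamma)\).

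\textbf{One-step merit inequality.} First I would bound \(\Phi_{\sigma_{\mathrm{IP}}}(X+\eta\Lambda)\). For the smooth part I use the descent lemma for \(h\). For the nonsmooth part, convexity of \(r\) gives \(r(X+\eta\Lambda)\le (1-\eta)r(X)+\eta r(X+\Lambda)\) for \(\eta\le1\). For the feasibility term, I expand
\[
(X+\eta\Lambda)^\top(X+\eta\Lambda)-I_p=E+\eta\bigl(X^\top D+D^\top X\bigr)-2\eta\lambda\,\sym(X^\top XE)+\eta^2\Lambda^\top\Lambda.
\]
Three facts then control this expression.
\begin{itemize}
\item By \cref{iprox_tangency_defect_bound}, the middle tangency term is bounded by \(\|E\|_{\mathrm F}\|R\|_{\mathrm F}\).
\item Since \(X^\top X=I_p+E\) commutes with \(E\) and has eigenvalues in \([1-\varepsilon,1+\varepsilon]\), we get \(\|E-2\eta\lambda(I_p+E)E\|_{\mathrm F}\le(1-2\eta\lambda(1-\varepsilon))\|E\|_{\mathrm F}\) for small \(\eta\).
\item The quadratic term is \(\mathcal O(\eta^2)\).
\end{itemize}
Together these give
\[
\Phi_{\sigma_{\mathrm{IP}}}(X+\eta\Lambda)\le\Phi_{\sigma_{\mathrm{IP}}}(X)+\eta\,\ell(X,\Gamma)+C\eta^2,
\]
where the constant \(C\) is uniform over the safe region and over \(\Gamma\) in a bounded set. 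I control the size of \(\Lambda\) using \(\|V\|_{\mathrm F}\le t(L_f^\varepsilon+2\sqrt{1+\varepsilon}\|\Gamma\|_{\mathrm F})\). This needs the multipliers to stay bounded, which I would show from nonexpansiveness of the dual iteration toward a solution multiplier in \(\mathbb B_{\mathrm a}\).

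\textbf{Backtracking and stepsize floor.} Backtracking is invoked only when \(\ell_{k,j}\le-M_{k,j}\) and \(\|R_{k,j}\|_{\mathrm F}\le\lambda(1-\varepsilon)\). In that case the one-step inequality gives
\[
\Phi_{\sigma_{\mathrm{IP}}}(X_k+\eta\Lambda_{k,j})\le\Phi_{\sigma_{\mathrm{IP}}}(X_k)-\eta M_{k,j}+C\eta^2\le\overline\Phi_k-c\eta M_{k,j}
\]
once \(\eta\le(1-c)M_{k,j}/C\). Here I use \(\Phi_{\sigma_{\mathrm{IP}}}(X_k)\le\overline\Phi_k\), the standard Zhang--Hager property, which holds by induction.

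This alone only gives a floor proportional to \(M_{k,j}\). To make the floor uniform I would use a sharper scaling. Let \(s=\|V\|_{\mathrm F}^2/t+\|E\|_{\mathrm F}\). I would show the second-order remainder is at most \(C'\eta^2\bigl(\|V\|_{\mathrm F}^2+\|E\|_{\mathrm F}\bigr)\le C''\eta^2 s\). This follows because \(\|\Lambda\|_{\mathrm F}^2\lesssim\|V\|_{\mathrm F}^2+\|E\|_{\mathrm F}^2\), and the smooth part is quadratic in \(\Lambda\). The safe-region test is handled the same way: the residual is at most \((1-\eta\lambda(1-\varepsilon))\|E\|_{\mathrm F}+C\eta^2\|\Lambda\|_{\mathrm F}^2\), which is \(\le\varepsilon\) for \(\eta\le\bar\eta\) uniformly. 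This yields \(\underline\eta_{\mathrm{IP}}=\min\{\eta_0,\beta_{\mathrm{ls}}\bar\eta\}\).

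\textbf{Inner-loop termination.} Next, at a fixed non-stationary \(X_k\), I show the inner loop ends. The dual iteration \cref{iprox_dual_update} is a gradient ascent on the concave dual of subproblem~\eqref{prox_subproblem}. Its gradient map \(\Gamma\mapsto R\) is Lipschitz with constant \(4t(1+\varepsilon)\) via firm nonexpansiveness of \(\prox_{tr}\), so \(\tau_{\mathrm d}\) gives \(R_{k,j}\to0\) and \(V_{k,j}\to V(X_k)\).

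In the limit, \(\ell\) becomes the exact quantity. By the optimality of \(V(X_k)\) over the tangent space and the choice of \(\sigma_{\mathrm{IP}}\) in \cref{iprox_penalty_condition}, the exact quantity is at most
\[
-\|V(X_k)\|_{\mathrm F}^2/(2t)-\vartheta_{\mathrm{IP}}\rho\|E\|_{\mathrm F}\cdot(\text{const}),
\]
strictly below \(-M\) up to the slack factor \(\vartheta_{\mathrm{IP}}>1\). Here I compare \(\|V\|^2/(2t)\) against \(\rho\|V\|^2/t\), which may require \(\rho<1/2\) or the same argument as for PGLanding. Non-stationarity, together with \cref{prop:prox_wellposedness}, makes \(M>0\). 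By continuity, both conditions in \cref{iprox_primal_conditions} eventually hold, so backtracking is triggered and terminates. I expect this step, and especially making the margins uniform in \(k\), to be the main obstacle.

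\textbf{Complexity and accumulation points.} Acceptance gives \(\Phi_{\sigma_{\mathrm{IP}}}(X_{k+1})\le\overline\Phi_k-c\underline\eta_{\mathrm{IP}}M_k\). Substituting into the reference update \cref{iprox_nonmonotone_reference}, and using \(a_{k+1}^{\mathrm{nm}}\le1/(1-\omega_{\mathrm{mem}})\), gives
\[
\overline\Phi_{k+1}\le\overline\Phi_k-c(1-\omega_{\mathrm{mem}})\underline\eta_{\mathrm{IP}}M_k.
\]
Telescoping, with \(\overline\Phi_k\) bounded below by the minimum of the merit over the safe region, gives \(\sum_{k<K}M_k\le\Delta_{\mathrm{IP}}\). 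The minimal term is therefore at most \(\Delta_{\mathrm{IP}}/K\), which yields \cref{iprox_primal_best_iterate_rates}.

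For accumulation points, summability gives \(V(X_k,\Gamma_k)\to0\) and \(E_k\to0\). The multipliers \(\Gamma_k\) are bounded, so along a subsequence \(X_k\to X^*\) and \(\Gamma_k\to\Gamma^*\). Continuity of \(\prox_{tr}\) gives \(V(X^*,\Gamma^*)=0\) and \(X^*\in\St(n,p)\), and \cref{prop:iprox_residual_exactness} then shows that \(X^*\) is first-order stationary.
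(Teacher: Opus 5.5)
Your proposal is correct and follows essentially the same route as the paper's proof. The shared steps are: the one-step bound $\Phi_{\sigma_{\mathrm{IP}}}(X+\eta\Lambda)\le\Phi_{\sigma_{\mathrm{IP}}}(X)+\eta\,\ell(X,\Gamma)+C\eta^2\|\Lambda\|_{\mathrm F}^2$ with $\|\Lambda\|_{\mathrm F}^2$ bounded by a multiple of $M$; the cocoercive multiplier iteration, which gives $R_{k,j}\to0$ and $\|\Gamma_{k,j}\|_{\mathrm F}\le3R_{\mathrm a}$; a strict descent margin at non-stationary $X_k$; and Zhang--Hager telescoping.

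The loose end you flagged closes without any restriction on $\rho$. The subgradient $-\nabla h(X)+2X\Gamma-V/t\in\partial r(X+V)$ from proximal optimality (equivalently, the $1/t$-strong-convexity comparison used for PGLanding) gives $\inner{\nabla h(X)}{V}+r(X+V)-r(X)\le-\|V\|_{\mathrm F}^2/t+\inner{\Gamma}{R}$, and the last term vanishes at the exact tangent solution, so $\rho<1$ suffices. You also do not need margins uniform in $k$, because the theorem only asserts finite termination at each fixed non-stationary $X_k$. The paper replaces your limiting argument by the explicit bound $-M-\ell\ge(a_{\mathrm{IP}}/\rho)M-b_{\mathrm I}\|R\|_{\mathrm F}$, with constants $a_{\mathrm{IP}},b_{\mathrm I}>0$, valid at every inner iterate.
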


\subsection{Sequence Convergence under the KL Property}
\label{sec:iprox_kl_convergence}
For full-sequence convergence, we use the KL property and an additional
error condition on the mismatch between the current iterate and the point
at which proximal optimality supplies a subgradient.
At every completed outer iteration set
\(M_k=M_{k,J_k}=M(X_k,\Gamma_k)\) and
\(Z_k=X_k+V(X_k,\Gamma_k)\).
We use the proximal optimality condition to define
\begin{equation}
\label{iprox_selected_subgradient}
\xi_k=-\nabla h(X_k)-\frac{1}{t}V(X_k,\Gamma_k)+2X_k\Gamma_k
\in\partial r(Z_k).
\end{equation}
The gap between \(r(X_k)\) and its subgradient model at \(Z_k\) is the
Bregman error
\begin{equation}
\label{iprox_bregman_error}
B_k=r(X_k)-r(Z_k)-\inner{\xi_k}{X_k-Z_k}\geq0.
\end{equation}
We require this gap to be controlled asymptotically by the decrease in the
nonmonotone reference sequence.
\begin{assumption}
\label{ass:iprox_bregman}
There exist a constant \(C_B>0\) and an integer \(k_B\geq1\) such that
\begin{equation*}
B_k\leq C_B\bigl(\delta_{k-1}^{\mathrm{nm}}+\delta_k^{\mathrm{nm}}\bigr),
\quad
\delta_k^{\mathrm{nm}}=\overline\Phi_k-\overline\Phi_{k+1},
\quad k\geq k_B.
\end{equation*}
\end{assumption}
A sufficient condition for \cref{ass:iprox_bregman} is
\(B_k\leq L_B\|V(X_k,\Gamma_k)\|_{\mathrm{F}}^2\)
for some \(L_B>0\) and all sufficiently large \(k\). This quadratic bound
holds when \(r\) is continuously differentiable with a locally Lipschitz
gradient. For
\(r=\gamma\|\cdot\|_1\), it also holds with \(B_k=0\) once \(X_k\) and \(Z_k\)
have the same support and signs. Soft thresholding may set entries of \(Z_k\)
exactly to zero while the corresponding entries of \(X_k\) remain nonzero.
Thus this identification is not guaranteed by the proximal step, and
\cref{ass:iprox_bregman} remains an explicit assumption.

Under \cref{ass:iprox_bregman}, we introduce an auxiliary function
to account for both the nonmonotone reference sequence and the
mismatch between \(X_k\) and \(Z_k\) in the KL analysis. We adopt the standard Kurdyka--{\L}ojasiewicz property
\citep[Definition~2.4]{attouch2013convergence}.
Following the finite-length framework of \citet{attouch2013convergence}
and the uniformization argument of \citet{bolte2014proximal}, we adapt the
construction in \citet[Section~4]{xie2026fsipl} to the Zhang--Hager reference
sequence. Define the lifted function on
\((\R^{n\times p})^2\times\mathbb S^p\times\R^{n\times p}\times\R\),
equipped with the standard Euclidean product norm, by
\begin{equation}
\label{iprox_kl_lift}
\mathcal H_{\mathrm{IP}}(X,Z,\Gamma,\xi,u)
=h(X)+r(Z)+\inner{\xi}{X-Z}
-\inner{\Gamma}{X^\top X-I_p}+u\lvert u\rvert.
\end{equation}
The term \(u\lvert u\rvert\) provides a continuously differentiable correction
of either sign, allowing the lifted energy to match a decreasing average of
the reference values. If \(h\) and \(r\) are semialgebraic, then
\(\mathcal H_{\mathrm{IP}}\) is semialgebraic and hence a KL function
\citep[Theorem~3]{bolte2014proximal}.

\prooflink{app:iprox_kl_convergence_proof}
\begin{theorem}
\label{thm:iprox_kl_convergence}
Assume the hypotheses of~\cref{thm:iprox_complexity} hold,
\cref{alg:iproxlanding} generates infinitely many outer iterates,
\cref{ass:iprox_bregman} holds, and the function in
\cref{iprox_kl_lift} is a KL function. Then
\begin{equation}
\label{iprox_kl_finite_length}
\sum_{k=0}^{\infty}\|X_{k+1}-X_k\|_{\mathrm{F}}<\infty.
\end{equation}
Consequently, the primal sequence \(\{X_k\}\) converges. Its limit \(X_\infty\) is
a first-order stationary point of problem~\eqref{problem}.
\end{theorem}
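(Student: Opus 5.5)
We follow the Attouch--Bolte--Svaiter finite-length scheme, applied to the lifted function \(\mathcal H_{\mathrm{IP}}\) in \cref{iprox_kl_lift}. It is evaluated along the lifted sequence
\[
w_k=(X_k,Z_k,\Gamma_k,\xi_k,u_k),
\]
where \(\xi_k\) is the selected subgradient from \cref{iprox_selected_subgradient} and \(u_k\) is a scalar chosen to align the lifted value with the nonmonotone reference.

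\textbf{Step 1: energy identity.} We compare \(\mathcal H_{\mathrm{IP}}(w_k)\) with \(\Phi_{\sigma_{\mathrm{IP}}}(X_k)\). The difference consists of three pieces:
\begin{itemize}
\item \(r(Z_k)+\inner{\xi_k}{X_k-Z_k}-r(X_k)=-B_k\);
\item the multiplier term \(-\inner{\Gamma_k}{X_k^\top X_k-I_p}\), which is controlled by \(R_{\mathrm a}\|X_k^\top X_k-I_p\|_{\mathrm F}\);
\item the penalty term.
\end{itemize}
We choose \(u_k|u_k|=\overline\Phi_k-h(X_k)-r(X_k)+\inner{\Gamma_k}{X_k^\top X_k-I_p}+(\text{a correction with }B_k)\), so that \(\mathcal H_{\mathrm{IP}}(w_k)\) equals \(\overline\Phi_k\) up to an error bounded by \(C_B(\delta^{\mathrm{nm}}_{k-1}+\delta^{\mathrm{nm}}_k)\), using \cref{ass:iprox_bregman}. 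Then \(\overline\Phi_k\) is nonincreasing and bounded below, and \(\delta_k^{\mathrm{nm}}\ge c\,\underline\eta_{\mathrm{IP}}M_k/a_{k+1}^{\mathrm{nm}}\ge c(1-\omega_{\mathrm{mem}})\underline\eta_{\mathrm{IP}}M_k\). This lower bound follows from \cref{iprox_acceptance_test} and the Zhang--Hager recursion, which also underlie \cref{thm:iprox_complexity}. Consequently \(\mathcal H_{\mathrm{IP}}(w_k)\) converges to a limit \(\overline\Phi_\infty\), and \(\sum_k M_k<\infty\).

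\textbf{Step 2: sufficient decrease in terms of step length.} From \cref{iprox_primal_update}, \cref{iprox_corrected_direction} and boundedness on the safe region,
\[
\|X_{k+1}-X_k\|_{\mathrm F}\le \eta_0\bigl(C\|V(X_k,\Gamma_k)\|_{\mathrm F}+C'\|X_k^\top X_k-I_p\|_{\mathrm F}\bigr),
\]
where the \(R_k\)-part is also absorbed, since \(\|R_k\|\le 2\sqrt{1+\varepsilon}\|V_k\|\). Hence \(\|X_{k+1}-X_k\|^2\lesssim M_k\) when \(\|X_k^\top X_k-I_p\|\le1\). The feasibility part enters only linearly in \(M_k\), which is even stronger: it allows the bound \(\|X_{k+1}-X_k\|\lesssim \sqrt{M_k}+M_k\).

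\textbf{Step 3: relative error bound.} We compute \(\partial\mathcal H_{\mathrm{IP}}(w_k)\). Its components are:
\begin{itemize}
\item in \(X\): \(\nabla h(X_k)+\xi_k-2X_k\Gamma_k=-V(X_k,\Gamma_k)/t\);
\item in \(Z\): \(\partial r(Z_k)-\xi_k\ni0\);
\item in \(\Gamma\): \(-(X_k^\top X_k-I_p)\);
\item in \(\xi\): \(X_k-Z_k=-V_k\);
\item in \(u\): \(2|u_k|\).
\end{itemize}
The \(u\)-component is the delicate one. Since \(u_k|u_k|\) is tied to differences of reference values and \(\Phi\), we bound \(|u_k|\) by \(\sqrt{\overline\Phi_k-\Phi(X_k)+\dots}\). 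By the averaging formula, \(\overline\Phi_k-\Phi(X_k)\) is a combination of past decrements, so it is bounded by \(\sum\) of \(\delta^{\mathrm{nm}}\) over a short window times geometric weights. This gives \(\dist(0,\partial\mathcal H_{\mathrm{IP}}(w_k))\lesssim\sqrt{M_k}+\sqrt{\delta_{k-1}^{\mathrm{nm}}+\delta_k^{\mathrm{nm}}}\), possibly after passing to a windowed (geometrically weighted) combination.

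\textbf{Step 4: KL argument and conclusion.} The set of cluster points of \(\{w_k\}\) is compact; boundedness comes from the safe region and \(\Gamma\in\mathbb B_{\mathrm a}\), together with the bounds on \(\xi_k\) and \(u_k\). On this set \(\mathcal H_{\mathrm{IP}}\) is constant, equal to \(\overline\Phi_\infty\). We apply the uniformized KL inequality of \citet{bolte2014proximal} with desingularizer \(\varphi\). Set \(\Delta_k=\varphi(\mathcal H(w_k)-\overline\Phi_\infty)-\varphi(\mathcal H(w_{k+1})-\overline\Phi_\infty)\). Concavity of \(\varphi\), combined with Steps 1--3, gives \(\delta_k^{\mathrm{nm}}\lesssim\Delta_k\cdot(\text{relative error})\). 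A standard AM--GM manipulation then gives
\[
\sqrt{\delta_k^{\mathrm{nm}}}\le \tfrac12\textstyle\sum_{i\in\text{window}}\theta^{k-i}\sqrt{\delta_i^{\mathrm{nm}}}+C\Delta_k.
\]
Summing over \(k\) and using \(\theta<1\) to absorb the windowed terms yields \(\sum_k\sqrt{\delta^{\mathrm{nm}}_k}<\infty\). Hence \(\sum\sqrt{M_k}<\infty\), and by Step 2, \cref{iprox_kl_finite_length} holds. Thus \(\{X_k\}\) is Cauchy and converges, and its limit is first-order stationary by \cref{thm:iprox_complexity}.

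The main obstacle is Step 3's handling of the \(u\)-component and the nonmonotone memory. The relative error involves \(\sqrt{\delta^{\mathrm{nm}}_{k-1}}\) and weighted past decrements rather than only the current decrease, so the telescoping must be arranged with geometric weights \(\omega_{\mathrm{mem}}^{k-i}\) in the style of \citet[Section~4]{xie2026fsipl}. I would first try to show \(\overline\Phi_k-\Phi(X_k)\le C\sum_{i<k}\omega_{\mathrm{mem}}^{k-1-i}\delta_i^{\mathrm{nm}}\) and carry those weights through the summation.
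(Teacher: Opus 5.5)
Your proposal is correct in outline and follows the paper's architecture:
\begin{itemize}
\item the free scalar $u$ pins the lifted value to a reference value;
\item the subgradient $(-V_k/t,\,0,\,-E_k,\,-V_k,\,2\lvert u_k\rvert)$ is built from the selected $\xi_k\in\partial r(Z_k)$;
\item $\delta_k^{\mathrm{nm}}\geq c(1-\omega_{\mathrm{mem}})\underline\eta_{\mathrm{IP}}M_k$ converts reference decrease into $M_k$;
\item the uniformized KL lemma is applied on the cluster set;
\item finite length follows from $\sum_k\sqrt{M_k}<\infty$ and $\|X_{k+1}-X_k\|_{\mathrm F}^2\lesssim M_k$.
\end{itemize}
The real difference is how the nonmonotone memory is handled. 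You pin the lifted value to $\overline\Phi_k$. The energy then drops by $\delta_k^{\mathrm{nm}}$, while the relative error is of order $\sqrt{\delta_{k-1}^{\mathrm{nm}}}+\sqrt{\delta_k^{\mathrm{nm}}}$, so this one-step lag must be absorbed by an AM--GM argument, as in PALM-type analyses. The paper instead pins to $\Psi_k=(\overline\Phi_{k-1}+\overline\Phi_k)/2$. Its decrease is exactly $s_k^2/2$ with $s_k=\sqrt{\delta_{k-1}^{\mathrm{nm}}+\delta_k^{\mathrm{nm}}}$, which is the same quantity in the relative-error bound $\dist(0,\partial\mathcal H_{\mathrm{IP}}(\mathcal W_k))\leq C_Rs_k$; this two-term form is dictated by the Bregman assumption. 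Concavity of $\varphi$ then gives $s_k\leq2C_R[\varphi(H_k)-\varphi(H_{k+1})]$, with $H_k=\Psi_k-\overline\Phi_\infty$, and no absorption step is needed. Both routes work; the paper's avoids the lag bookkeeping entirely.

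Your route needs three points tightened.

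\textbf{Exact pinning.} The pinning must be exact, not ``up to an error''. Take $u_k\lvert u_k\rvert=\overline\Phi_k-\mathcal J_k$ with $\mathcal J_k=f(X_k)-B_k-\inner{\Gamma_k}{E_k}$; that is, your correction is $+B_k$. An inexact alignment would make the lifted decrease $\delta_k^{\mathrm{nm}}-B_k+B_{k+1}$, which can be negative, and the concavity step would break. The Bregman assumption should then enter only through the bound on $\lvert u_k\rvert$.

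\textbf{No window.} No geometric window is needed. The Zhang--Hager recursion gives the exact identity $\overline\Phi_k-\Phi_{\sigma_{\mathrm{IP}}}(X_k)=(a_k^{\mathrm{nm}}-1)\delta_{k-1}^{\mathrm{nm}}$. Together with $\|E_k\|_{\mathrm F}\leq M_k/\rho$, $\|\Gamma_k\|_{\mathrm F}\leq3R_{\mathrm a}$, and the assumption, this yields $\lvert u_k\rvert^2\leq C(\delta_{k-1}^{\mathrm{nm}}+\delta_k^{\mathrm{nm}})$ for $k\geq k_B$.

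\textbf{AM--GM weights.} Start from $\delta_k^{\mathrm{nm}}\leq C\Delta_k\bigl(\sqrt{\delta_{k-1}^{\mathrm{nm}}}+\sqrt{\delta_k^{\mathrm{nm}}}\bigr)$. A factor $\frac12$ on both terms leaves only $\frac12\sqrt{\delta_k^{\mathrm{nm}}}\leq\frac12\sqrt{\delta_{k-1}^{\mathrm{nm}}}+C'\Delta_k$, which does not sum. A factor $\frac14$ works. With a full geometric tail of ratio $\theta$ you would need a factor below $1-\theta$, not merely $\theta<1$.

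Minor points:
\begin{itemize}
\item Only $\Gamma_{k,0}$ lies in $\mathbb B_{\mathrm a}$; the accepted $\Gamma_k$ satisfy only $\|\Gamma_k\|_{\mathrm F}\leq3R_{\mathrm a}$.
\item Boundedness of $Z_k=X_k+V_k$ should be recorded.
\item The case in which $\overline\Phi_k$ reaches $\overline\Phi_\infty$ after finitely many steps must be treated separately; there $M_j=0$ and the iterates freeze.
\end{itemize}
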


\section{Numerical Experiments}
\label{sec:experiments}
We evaluate the proposed methods on sparse principal component analysis
(SPCA) problems. The SubLanding experiments examine the effects of the
normal stepsize parameters and compare its convergence behavior with
RieSub. We also compare PGLanding and IPGLanding with
ManPG-Ada\footnote{The authors' MATLAB implementation of ManPG-Ada is available at 
 \url{https://github.com/chenshixiang/ManPG/tree/master/manpg_code_share/SPCA}.},
SLPG\footnote{The Python reference implementation of SLPG is provided in PySTOP v0.2.2, available at
 \url{https://pypi.org/project/pystop/0.2.2/}.},
RADMM\footnote{The authors' RADMM implementation is available at
 \url{https://github.com/JasonJiaxiangLi/RADMM}.},
SOC\footnote{A MATLAB implementation of SOC is available at
 \url{https://github.com/chenshixiang/ManPG/tree/master/manpg_code_share/SPCA}.},
and LSALM.
All computations are performed in MATLAB~R2025b on a computer with an
Intel Core Ultra~9 275HX processor and 32~GB of RAM.
The source code for the proposed algorithms will be made publicly available at \url{https://github.com/cui-zilong/Nonsmooth-Stiefel-Landing}.

\subsection{Sparse PCA and Experimental Setup}
\label{sec:spca}

Given a symmetric positive semidefinite matrix $L\in\R^{n\times n}$,
we consider the following orthogonality-constrained SPCA problem:
\begin{equation}
\label{eq:spca}
\min_{X\in\R^{n\times p}}
f(X)=-\frac{1}{2}\operatorname{tr}(X^\top L X)
+\gamma\|X\|_1,
\quad
\text{subject to}\quad X^\top X=I_p,
\end{equation}
where $n$ is the number of features, $p\leq n$ is the number of principal components, and
$\gamma>0$ is the sparsity regularization parameter.
We construct \(L\) from three types of data: randomly generated data,
structured synthetic data, and the Colon gene-expression data set.

\emph{Randomly generated data.}
We generate an \(m\times n\) matrix with independent standard Gaussian
entries, where \(m\) is the number of samples. We then center each column
and normalize it to unit Euclidean norm. Denoting the resulting matrix
by \(A\), we set \(L=2A^\top A\).

\emph{Structured synthetic data.}
Following the five-pattern construction used by
\citet{zhu2026primal}, we use the profiles illustrated in
\citet[Figure~4]{huang2022riemannian}, repeat each profile \(m/5\) times,
and add independent Gaussian noise with mean zero and variance
\(0.25\) to each entry. We then center each column and normalize it
to unit Euclidean norm, obtaining \(A\in\R^{m\times n}\), and set
\(L=2A^\top A\).

\emph{Real-world data.}
 We use the Colon gene-expression data set~\citep{alon1999colon}\footnote{The data are available at \url{https://github.com/alxiang/Colon-Cancer-Classification/blob/master/ColonX.csv}.},
which contains 62 samples and 2000 variables. After removing
the sample identifiers and variables with no observed values,
we replace non-finite entries by the mean of the finite values
in the corresponding variable and discard constant variables.
We then center each retained variable and scale it to unit
sample variance. Denoting the resulting matrix by
\(A\in\R^{m\times n}\), we set \(L=2A^\top A/(m-1)\).
We use \(p=5\), following~\citet{thiao2010dc}, who
report that the first five principal components explain approximately
\(70\%\) of the total variance.

Following~\citet{zhu2026primal}, all methods on each problem instance
start from a common point obtained by projecting a standard Gaussian
matrix onto the Stiefel manifold and running the Riemannian subgradient
method~\citep{li2021weakly} for 250 iterations with stepsizes
$\eta_k=(k+1)^{-3/4}$, $k=0,\ldots,249$.
In the following experiments, sparsity is measured as the proportion of
entries satisfying \(|X_{ij}|\leq10^{-5}\).
CPU times reported in the tables are averaged over 10 runs with fixed
data and initialization. These repetitions measure timing variability,
not variability across random initializations.

\subsection{SubLanding: Effects of the Normal Direction and Stepsize}\label{subsec:sublanding-numerics}

The first comparison evaluates SubLanding against the Riemannian subgradient
method (RieSub) of~\citet{li2021weakly} and examines the role of the normal
direction. Both methods use the same subgradient selection
\(F_k=-LX_k+\gamma\operatorname{sign}(X_k)\in\partial f(X_k)\),
where \(\operatorname{sign}\) is applied entrywise with
\(\operatorname{sign}(0)=0\).
\begin{samepage}
RieSub uses the update
\[
X_{k+1}=\operatorname{Retr}_{X_k}
\left(-\eta_k\operatorname{Proj}_{\mathrm T_{X_k}\St(n,p)}(F_k)\right),
\]
where \(\operatorname{Proj}_{\mathrm T_{X_k}\St(n,p)}(\cdot)\) denotes the
orthogonal projection onto the tangent space at \(X_k\), and
\(\operatorname{Retr}_{X_k}(\cdot)\) is the polar retraction at \(X_k\).
SubLanding uses \(F_k\) in~\cref{sublanding_quantities,sublanding_iteration}.
\end{samepage}

A randomly generated SPCA instance with \(n=m=500\), \(p=20\), and
\(\gamma=0.5\) is used.
To examine the effect of \(\alpha\) and \(\lambda\), each setting is run for
\(K=50000\) iterations using
\(\eta_k=10^{-3}(k+1)^{-1/2}\), \(k=0,1,\ldots,K-1\). The tested values are
\(\lambda\in\{5,50,500\}\) for \(\alpha=1\) and
\(\lambda\in\{0.25,0.5,0.75\}\) for \(\alpha=0\).

\begin{table}[tbp]
\centering
\small
\setlength{\tabcolsep}{3pt}
\begin{tabular}{@{}lcccccr@{}}
\toprule
Method & \(\alpha\) & \(\lambda\) & \(f(X)\) & \(\|X^\top X-I_p\|_{\mathrm{F}}\) & Sparsity & Time (s) \\
\midrule
RieSub & -- & -- & \(-9.995\) & \(2.0\times10^{-14}\) & 0.9980 & 21.206 \\
\midrule
SubLanding & 1 & 5 & \(-9.996\) & \(3.9\times10^{-04}\) & 0.9980 & 10.748 \\
SubLanding & 1 & 50 & \(-9.995\) & \(2.5\times10^{-05}\) & 0.9980 & 10.571 \\
SubLanding & 1 & 500 & \(-9.995\) & \(2.5\times10^{-06}\) & 0.9980 & 10.699 \\
\midrule
SubLanding & 0 & 0.25 & \(-9.995\) & \(2.3\times10^{-08}\) & 0.9980 & 10.542 \\
SubLanding & 0 & 0.50 & \(-9.995\) & \(1.2\times10^{-08}\) & 0.9980 & 10.925 \\
SubLanding & 0 & 0.75 & \(-9.995\) & \(7.9\times10^{-09}\) & 0.9980 & 10.887 \\
\bottomrule
\end{tabular}
\caption{Effects of \(\alpha\) and \(\lambda\) on SubLanding
after \(50000\) iterations, with RieSub as a baseline.}
\label{tab:sublanding-parameter-screening}
\end{table}

\begin{figure}[t]
\centering
\includegraphics[width=\textwidth]{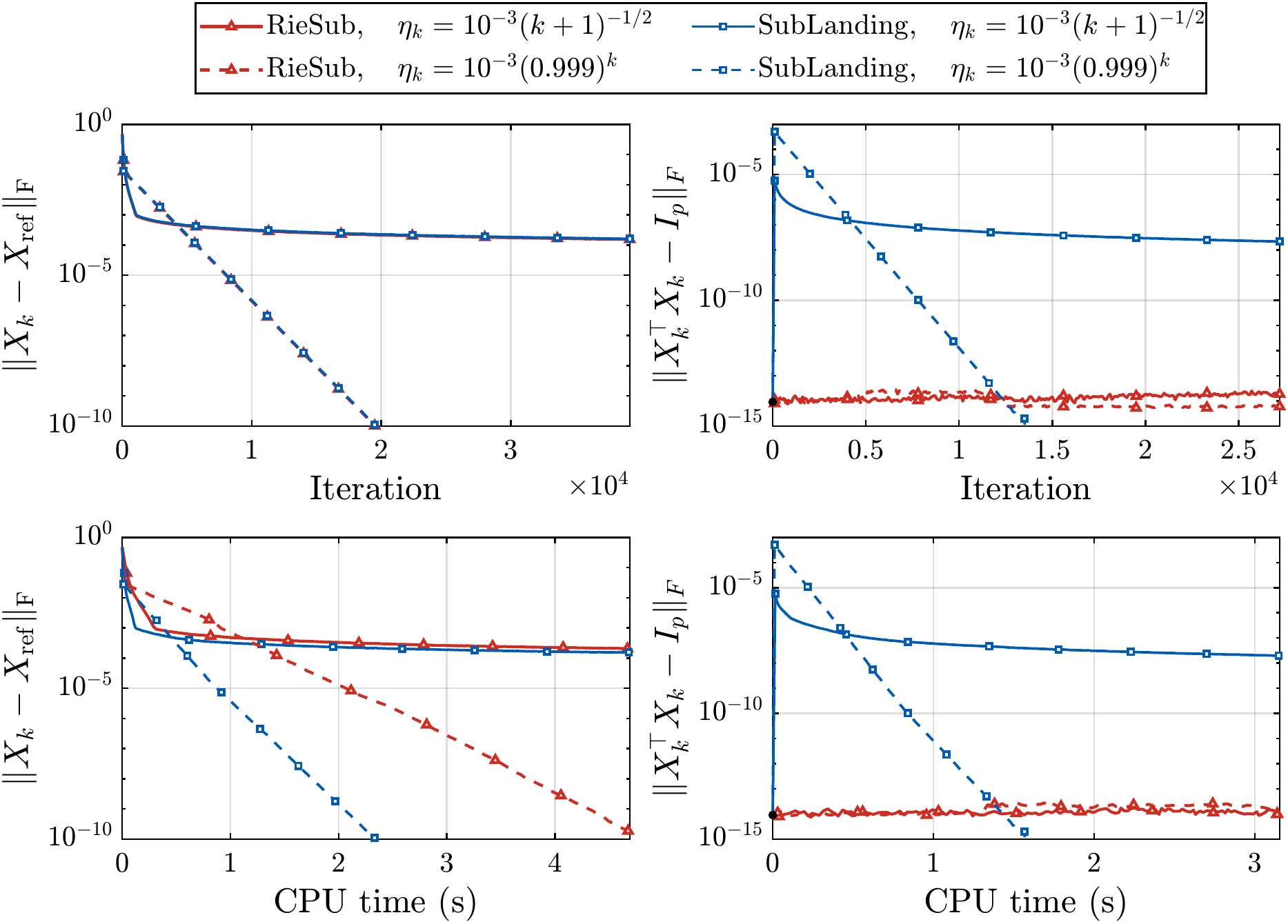}
\caption{Comparison of RieSub and SubLanding in iterate distance
(left) and  feasibility residual (right). Black dots mark the
common initial feasibility residual. The horizontal axes are
truncated for clarity.}
\label{fig:sublanding-riesub-convergence}
\end{figure}

As shown in~\cref{tab:sublanding-parameter-screening}, after \(50000\)
iterations on this instance, the tested choices with
\(\alpha=0\) give substantially smaller feasibility residuals than those
with \(\alpha=1\). This qualitative difference is compatible with the
best-iterate asymptotic bounds in~\cref{thm:alpha_sublanding_complexity}, which give
feasibility complexity \(\mathcal O(\epsilon^{-2})\) for
\(\alpha=1\) and \(\mathcal O(\epsilon^{-1})\) for \(\alpha=0\).

For the convergence comparison in~\cref{fig:sublanding-riesub-convergence},
we use \((\alpha,\lambda)=(0,0.5)\) for SubLanding and test both methods
with the polynomially diminishing stepsize \(\eta_k=10^{-3}(k+1)^{-1/2}\)
and the geometrically decreasing stepsize \(\eta_k=10^{-3}(0.999)^k\).

We use ManPG-Ada~\citep{chen2020proximal} to compute a
high-accuracy reference point \(X_{\mathrm{ref}}\) satisfying
\(\|V_{\mathrm{ref}}/t_{\mathrm{ref}}\|_{\mathrm{F}}\leq10^{-10}\),
where \(V_{\mathrm{ref}}\) is the computed tangential proximal direction
and \(t_{\mathrm{ref}}\) is the corresponding proximal parameter.
\Cref{fig:sublanding-riesub-convergence} reports the distance
\(\|X_k-X_{\mathrm{ref}}\|_{\mathrm{F}}\) and the  feasibility residual
\(\|X_k^\top X_k^{}-I_p\|_{\mathrm{F}}\) against iteration count and CPU time,
with each curve obtained from a single run.
With geometrically decreasing stepsizes, both methods attain smaller
distances to \(X_{\mathrm{ref}}\) over the displayed range than with
polynomially diminishing stepsizes. SubLanding has lower measured CPU time
on this instance. RieSub preserves feasibility up to numerical precision,
whereas the SubLanding feasibility residual decreases over the run. For the
geometric stepsize, the iterate distance and feasibility residual of SubLanding
exhibit approximately linear decay on the semilogarithmic plots over the
displayed decreasing portions.

\FloatBarrier
\subsection{Comparison with Existing Methods}
\label{sec:prox-implementation}
\label{subsec:cross-method-comparison}

We compare PGLanding and IPGLanding with
ManPG-Ada~\citep{chen2020proximal},
SLPG~\citep{liu2024penaltyfree}, RADMM~\citep{li2025riemannian},
SOC~\citep{lai2014splitting}, and LSALM~\citep{zhu2026primal}.

\emph{Test instances.}
We use the three types of data described in~\cref{sec:spca}.
For the random instances, we fix \(m=100\), \(n=1000\), and
\(\gamma=0.5\), and vary \(p\in\{5,10,15,20\}\).
For the structured synthetic instances, we fix \(m=1000\) and
\(\gamma=0.5\), and use
\((n,p)\in\{(200,100),(300,150),(400,200),(500,250)\}\).
For the Colon data set, we use \(m=62\), \(n=2000\), and \(p=5\),
and vary \(\gamma\in\{1,5,10\}\).

\emph{Initialization and reference points.}
All methods use the common initialization in~\cref{sec:spca}.
Starting from this point, ManPG-Ada computes the numerical reference point
\(X_{\mathrm{ref}}\) and value \(f_{\mathrm{ref}}=f(X_{\mathrm{ref}})\).
Let \(V_{\mathrm{ref}}\) be the computed tangential proximal direction
for subproblem~\eqref{prox_subproblem} at \(X_{\mathrm{ref}}\), with
proximal parameter \(t_{\mathrm{ref}}\). The reference computation terminates when 
\(\|V_{\mathrm{ref}}/t_{\mathrm{ref}}\|_{\mathrm{F}}\leq10^{-6}\). 
We tighten the SSN tolerance to \(10^{-12}\) for this computation.
All reference points satisfy this stationarity criterion.

\begin{samepage}
\emph{Stopping and reporting criteria.}
All comparison runs use the common tolerance \(\epsilon=10^{-4}\). 
For \(k\geq1\), let \(X_k\) and \(X_{k-1}\) denote consecutive reported
iterates. A run terminates when
\begin{equation}\label{eq:common-dataset-stopping}
f(X_k)-f_{\mathrm{ref}}\leq\epsilon,
\quad
\|X_k-X_{k-1}\|_{\mathrm{F}}\leq\epsilon,
\quad
\|X_k^\top X_k^{}-I_p\|_{\mathrm{F}}\leq\epsilon.
\end{equation}
\end{samepage}
For SOC, \(U_k\), \(Q_k\), and \(X_k\) denote its three primal split
 variables, with \(X_k\) satisfying the orthogonality constraints.
SOC additionally requires
\[
\frac{\|U_k-Q_k\|_{\mathrm{F}}}
{\max\{1,\|U_k\|_{\mathrm{F}},\|Q_k\|_{\mathrm{F}}\}}
+
\frac{\|U_k-X_k\|_{\mathrm{F}}}
{\max\{1,\|U_k\|_{\mathrm{F}},\|X_k\|_{\mathrm{F}}\}}
\leq\epsilon.
\]
For RADMM, \(X_k\) and \(Z_k\) denote its two primal split variables, and the
additional requirement is
\[
\frac{\|X_k-Z_k\|_{\mathrm{F}}}{\max\{1,\|X_k\|_{\mathrm{F}},\|Z_k\|_{\mathrm{F}}\}}
\leq\epsilon.
\]
After termination, we verify
\(\|X-X_{\mathrm{ref}}\|_{\mathrm{F}}<0.1\) for the returned point \(X\).
All methods are limited to 30000 outer iterations.
``Iter'' denotes the iteration count, and
``Time/Iter'' is the average CPU time per iteration.

\begin{table}[!t]
\centering
\begingroup
\small
\setlength{\tabcolsep}{2.5pt}
\resizebox{\textwidth}{!}{
\begin{tabular}{ccclcccrcc}
\toprule
\(n\) & \(m\) & \(p\) & Method & \(f(X)\) & Sparsity & Iter & Time (s) & Time/Iter (s) & \(\|X^\top X-I_p\|_{\mathrm{F}}\) \\
\midrule
\multirow{7}{*}{1000} & \multirow{7}{*}{100} & \multirow{7}{*}{5} & ManPG-Ada   & \(-28.0296\) & \(0.5016\) & \(452\)  & \(0.230\) & \(5.09\times10^{-04}\)  & \(6.24\times10^{-15}\) \\
& & & PGLanding & \(-28.0296\) & \(0.5020\) & \(1374\) & \(0.710\) & \(5.17\times10^{-04}\)  & \(6.16\times10^{-09}\) \\
& & & RADMM       & \(-28.0295\) & \(0.5016\) & \(2774\) & \(0.590\) & \(2.13\times10^{-04}\)  & \(3.08\times10^{-15}\) \\
& & & SOC         & \(-28.0295\) & \(0.5016\) & \(2282\) & \(1.310\) & \(5.74\times10^{-04}\)  & \(1.51\times10^{-15}\) \\
& & & LSALM       & \(-28.0296\) & \(0.5018\) & \(2454\) & \(0.482\) & \(1.96\times10^{-04}\) & \(1.72\times10^{-05}\) \\
& & & SLPG        & \(-28.0296\) & \(0.5018\) & \(598\) & \(0.194\) & \(3.24\times10^{-04}\) & \(8.74\times10^{-14}\) \\
& & & IPGLanding   & \(-28.0295\) & \(0.5018\) & \(622\) & \(0.169\) & \(2.72\times10^{-04}\) & \(6.06\times10^{-09}\) \\
\midrule
\multirow{7}{*}{1000} & \multirow{7}{*}{100} & \multirow{7}{*}{10} & ManPG-Ada   & \(-52.4990\) & \(0.5193\) & \(1031\) & \(1.340\) & \(1.30\times10^{-03}\)  & \(8.96\times10^{-15}\) \\
& & & PGLanding & \(-52.4990\) & \(0.5192\) & \(3730\) & \(4.150\) & \(1.11\times10^{-03}\)  & \(4.94\times10^{-09}\) \\
& & & RADMM       & \(-52.4989\) & \(0.5194\) & \(7257\) & \(3.330\) & \(4.59\times10^{-04}\)  & \(8.91\times10^{-15}\) \\
& & & SOC         & \(-52.4989\) & \(0.5195\) & \(6030\) & \(3.270\) & \(5.42\times10^{-04}\)  & \(5.05\times10^{-15}\) \\
& & & LSALM       & \(-52.4989\) & \(0.5191\) & \(6910\) & \(2.250\) & \(3.26\times10^{-04}\) & \(1.19\times10^{-05}\) \\
& & & SLPG        & \(-52.4990\) & \(0.5193\) & \(1518\) & \(1.379\) & \(9.08\times10^{-04}\) & \(6.90\times10^{-14}\) \\
& & & IPGLanding   & \(-52.4989\) & \(0.5193\) & \(1597\) & \(0.846\) & \(5.30\times10^{-04}\) & \(2.36\times10^{-09}\) \\
\midrule
\multirow{7}{*}{1000} & \multirow{7}{*}{100} & \multirow{7}{*}{15} & ManPG-Ada   & \(-74.9531\) & \(0.5301\) & \(1829\)  & \(4.230\) & \(2.31\times10^{-03}\) & \(1.34\times10^{-14}\) \\
& & & PGLanding & \(-74.9531\) & \(0.5301\) & \(7275\)  & \(9.970\) & \(1.37\times10^{-03}\) & \(4.58\times10^{-09}\) \\
& & & RADMM       & \(-74.9531\) & \(0.5300\) & \(15854\) & \(9.330\) & \(5.88\times10^{-04}\) & \(1.28\times10^{-14}\) \\
& & & SOC         & \(-74.9531\) & \(0.5301\) & \(13660\) & \(9.160\) & \(6.71\times10^{-04}\) & \(5.95\times10^{-15}\) \\
& & & LSALM       & \(-74.9531\) & \(0.5302\) & \(13558\) & \(4.281\) & \(3.16\times10^{-04}\) & \(7.04\times10^{-06}\) \\
& & & SLPG        & \(-74.9532\) & \(0.5297\) & \(3162\) & \(3.020\) & \(9.55\times10^{-04}\) & \(1.48\times10^{-13}\) \\
& & & IPGLanding   & \(-74.9531\) & \(0.5297\) & \(3316\) & \(2.065\) & \(6.23\times10^{-04}\) & \(2.86\times10^{-09}\) \\
\midrule
\multirow{7}{*}{1000} & \multirow{7}{*}{100} & \multirow{7}{*}{20} & ManPG-Ada   & \(-94.2129\) & \(0.5509\) & \(659\)  & \(1.820\) & \(2.76\times10^{-03}\)  & \(1.77\times10^{-14}\) \\
& & & PGLanding & \(-94.2129\) & \(0.5512\) & \(2323\) & \(4.870\) & \(2.10\times10^{-03}\)  & \(2.80\times10^{-09}\) \\
& & & RADMM       & \(-94.2129\) & \(0.5502\) & \(6616\) & \(6.010\) & \(9.08\times10^{-04}\)  & \(1.69\times10^{-14}\) \\
& & & SOC         & \(-94.2129\) & \(0.5501\) & \(6392\) & \(5.270\) & \(8.24\times10^{-04}\)  & \(7.29\times10^{-15}\) \\
& & & LSALM       & \(-94.2129\) & \(0.5509\) & \(4160\) & \(1.834\) & \(4.41\times10^{-04}\) & \(1.19\times10^{-05}\) \\
& & & SLPG        & \(-94.2130\) & \(0.5506\) & \(1050\) & \(1.339\) & \(1.28\times10^{-03}\) & \(2.84\times10^{-13}\) \\
& & & IPGLanding   & \(-94.2129\) & \(0.5500\) & \(1343\) & \(0.875\) & \(6.52\times10^{-04}\) & \(1.70\times10^{-10}\) \\
\bottomrule
\end{tabular}}
\endgroup
\caption{Comparison on randomly generated SPCA instances.}
\label{tab:comparison-random-n1000-m100-tol1e-4}
\end{table}

\begin{table}[!t]
\centering
\begingroup
\small
\setlength{\tabcolsep}{2.5pt}
\resizebox{\textwidth}{!}{
\begin{tabular}{ccclcccrcc}
\toprule
\(n\) & \(m\) & \(p\) & Method & \(f(X)\) & Sparsity & Iter & Time (s) & Time/Iter (s) & \(\|X^\top X-I_p\|_{\mathrm{F}}\) \\
\midrule
\multirow{7}{*}{200} & \multirow{7}{*}{1000} & \multirow{7}{*}{100} & ManPG-Ada   & \(-51.9269\) & \(0.9938\) & \(14\)  & \(1.434\) & \(1.02\times10^{-01}\)  & \(1.02\times10^{-14}\) \\
& & & PGLanding & \(-51.9269\) & \(0.9938\) & \(10\)  & \(0.813\) & \(8.13\times10^{-02}\)  & \(7.38\times10^{-07}\) \\
& & & RADMM       & \(-51.9269\) & \(0.9938\) & \(94\)  & \(0.140\) & \(1.49\times10^{-03}\)  & \(2.50\times10^{-14}\) \\
& & & SOC         & \(-51.9269\) & \(0.9938\) & \(71\)  & \(0.204\) & \(2.87\times10^{-03}\)  & \(2.92\times10^{-14}\) \\
& & & LSALM       & \(-51.9274\) & \(0.9938\) & \(51\) & \(0.019\) & \(3.73\times10^{-04}\) & \(6.23\times10^{-05}\) \\
& & & SLPG        & \(-51.9269\) & \(0.9938\) & \(11\) & \(0.015\) & \(1.36\times10^{-03}\) & \(1.48\times10^{-15}\) \\
& & & IPGLanding   & \(-51.9269\) & \(0.9938\) & \(8\) & \(0.011\) & \(1.38\times10^{-03}\) & \(1.33\times10^{-09}\) \\
\midrule
\multirow{7}{*}{300} & \multirow{7}{*}{1000} & \multirow{7}{*}{150} & ManPG-Ada   & \(-78.0380\) & \(0.9956\) & \(67\)  & \(10.660\) & \(1.59\times10^{-01}\)  & \(1.74\times10^{-14}\) \\
& & & PGLanding & \(-78.0380\) & \(0.9956\) & \(125\) & \(9.987\) & \(7.99\times10^{-02}\)  & \(8.01\times10^{-09}\) \\
& & & RADMM       & \(-78.0379\) & \(0.9956\) & \(202\) & \(0.496\) & \(2.46\times10^{-03}\)  & \(3.52\times10^{-14}\) \\
& & & SOC         & \(-78.0379\) & \(0.9956\) & \(188\) & \(0.640\) & \(3.40\times10^{-03}\)  & \(4.24\times10^{-14}\) \\
& & & LSALM       & \(-78.0380\) & \(0.9956\) & \(262\) & \(0.159\) & \(6.07\times10^{-04}\) & \(6.11\times10^{-06}\) \\
& & & SLPG        & \(-78.0380\) & \(0.9956\) & \(40\) & \(0.090\) & \(2.25\times10^{-03}\) & \(8.38\times10^{-16}\) \\
& & & IPGLanding   & \(-78.0380\) & \(0.9956\) & \(39\) & \(0.042\) & \(1.08\times10^{-03}\) & \(2.90\times10^{-09}\) \\
\midrule
\multirow{7}{*}{400} & \multirow{7}{*}{1000} & \multirow{7}{*}{200} & ManPG-Ada   & \(-108.5487\) & \(0.9951\) & \(109\) & \(27.940\) & \(2.56\times10^{-01}\)  & \(1.85\times10^{-14}\) \\
& & & PGLanding & \(-108.5487\) & \(0.9951\) & \(780\) & \(59.180\) & \(7.59\times10^{-02}\)  & \(4.37\times10^{-08}\) \\
& & & RADMM       & \(-108.5487\) & \(0.9951\) & \(465\) & \(1.891\) & \(4.07\times10^{-03}\)  & \(5.11\times10^{-14}\) \\
& & & SOC         & \(-108.5487\) & \(0.9951\) & \(308\) & \(1.939\) & \(6.30\times10^{-03}\)  & \(5.32\times10^{-14}\) \\
& & & LSALM       & \(-108.5488\) & \(0.9951\) & \(416\) & \(0.358\) & \(8.61\times10^{-04}\) & \(1.60\times10^{-05}\) \\
& & & SLPG        & \(-108.5488\) & \(0.9951\) & \(99\) & \(0.316\) & \(3.19\times10^{-03}\) & \(1.95\times10^{-11}\) \\
& & & IPGLanding   & \(-108.5487\) & \(0.9951\) & \(82\) & \(0.123\) & \(1.50\times10^{-03}\) & \(1.13\times10^{-10}\) \\
\midrule
\multirow{7}{*}{500} & \multirow{7}{*}{1000} & \multirow{7}{*}{250} & ManPG-Ada   & \(-131.6638\) & \(0.9968\) & \(108\) & \(55.320\) & \(5.12\times10^{-01}\)  & \(3.36\times10^{-14}\) \\
& & & PGLanding & \(-131.6638\) & \(0.9968\) & \(356\) & \(76.720\) & \(2.16\times10^{-01}\) & \(1.25\times10^{-08}\) \\
& & & RADMM       & \(-131.6637\) & \(0.9968\) & \(391\) & \(3.170\) & \(8.11\times10^{-03}\)  & \(5.48\times10^{-14}\) \\
& & & SOC         & \(-131.6637\) & \(0.9968\) & \(386\) & \(4.482\) & \(1.16\times10^{-02}\)  & \(6.39\times10^{-14}\) \\
& & & LSALM       & \(-131.6638\) & \(0.9968\) & \(426\) & \(0.501\) & \(1.18\times10^{-03}\) & \(1.60\times10^{-06}\) \\
& & & SLPG        & \(-131.6638\) & \(0.9968\) & \(81\) & \(0.439\) & \(5.42\times10^{-03}\) & \(2.22\times10^{-16}\) \\
& & & IPGLanding   & \(-131.6638\) & \(0.9968\) & \(81\) & \(0.167\) & \(2.06\times10^{-03}\) & \(2.05\times10^{-09}\) \\
\bottomrule
\end{tabular}}
\endgroup
\caption{Comparison on structured synthetic SPCA instances.}
\label{tab:comparison-synthetic-tol1e4}
\end{table}

\begin{table}[!t]
\centering
\begingroup
\small
\setlength{\tabcolsep}{2.5pt}
\resizebox{\textwidth}{!}{
\begin{tabular}{lcccclcccrcc}
\toprule
Data & \(n\) & \(m\) & \(p\) & \(\gamma\) & Method & \(f(X)\) & Sparsity & Iter & Time (s) & Time/Iter (s) & \(\|X^\top X-I_p\|_{\mathrm{F}}\) \\
\midrule
\multirow{7}{*}{Colon} & \multirow{7}{*}{2000} & \multirow{7}{*}{62} & \multirow{7}{*}{5} & \multirow{7}{*}{1} & ManPG-Ada   & \(-1256.1860\) & \(0.2144\) & \(577\)  & \(0.700\) & \(1.21\times10^{-03}\)  & \(2.40\times10^{-15}\) \\
& & & & & PGLanding & \(-1256.1860\) & \(0.2144\) & \(1608\) & \(2.680\) & \(1.67\times10^{-03}\)  & \(2.86\times10^{-10}\) \\
& & & & & RADMM       & \(-1256.1860\) & \(0.2142\) & \(2554\) & \(1.710\) & \(6.70\times10^{-04}\)  & \(2.90\times10^{-15}\) \\
& & & & & SOC         & \(-1256.1860\) & \(0.2140\) & \(2622\) & \(3.960\) & \(1.51\times10^{-03}\)  & \(2.32\times10^{-15}\) \\
& & & & & LSALM       & \(-1256.1860\) & \(0.2143\) & \(1274\) & \(0.622\) & \(4.88\times10^{-04}\) & \(2.89\times10^{-06}\) \\
& & & & & SLPG        & \(-1256.1860\) & \(0.2144\) & \(591\) & \(0.517\) & \(8.75\times10^{-04}\) & \(3.23\times10^{-16}\) \\
& & & & & IPGLanding   & \(-1256.1860\) & \(0.2141\) & \(575\) & \(0.274\) & \(4.77\times10^{-04}\) & \(1.21\times10^{-09}\) \\
\midrule
\multirow{7}{*}{Colon} & \multirow{7}{*}{2000} & \multirow{7}{*}{62} & \multirow{7}{*}{5} & \multirow{7}{*}{5} & ManPG-Ada   & \(-779.5406\) & \(0.7104\) & \(1545\) & \(1.990\) & \(1.29\times10^{-03}\)  & \(2.40\times10^{-15}\) \\
& & & & & PGLanding & \(-779.5406\) & \(0.7104\) & \(4103\) & \(6.260\) & \(1.53\times10^{-03}\)  & \(4.93\times10^{-10}\) \\
& & & & & RADMM       & \(-779.5406\) & \(0.7104\) & \(7974\) & \(5.500\) & \(6.90\times10^{-04}\)  & \(1.25\times10^{-15}\) \\
& & & & & SOC         & \(-779.5406\) & \(0.7104\) & \(8977\) & \(13.300\) & \(1.48\times10^{-03}\)  & \(1.97\times10^{-15}\) \\
& & & & & LSALM       & \(-779.5406\) & \(0.7106\) & \(4813\) & \(2.860\) & \(5.94\times10^{-04}\) & \(5.42\times10^{-06}\) \\
& & & & & SLPG        & \(-779.5406\) & \(0.7104\) & \(1717\) & \(1.615\) & \(9.41\times10^{-04}\) & \(1.54\times10^{-14}\) \\
& & & & & IPGLanding   & \(-779.5406\) & \(0.7104\) & \(1677\) & \(0.793\) & \(4.73\times10^{-04}\) & \(2.15\times10^{-08}\) \\
\midrule
\multirow{7}{*}{Colon} & \multirow{7}{*}{2000} & \multirow{7}{*}{62} & \multirow{7}{*}{5} & \multirow{7}{*}{10} & ManPG-Ada   & \(-424.4652\) & \(0.8004\) & \(174\) & \(0.280\) & \(1.61\times10^{-03}\)  & \(2.85\times10^{-16}\) \\
& & & & & PGLanding & \(-424.4652\) & \(0.8004\) & \(356\) & \(0.880\) & \(2.47\times10^{-03}\)  & \(3.88\times10^{-08}\) \\
& & & & & RADMM       & \(-424.4651\) & \(0.8004\) & \(772\) & \(0.590\) & \(7.64\times10^{-04}\)  & \(6.43\times10^{-15}\) \\
& & & & & SOC         & \(-424.4652\) & \(0.8004\) & \(678\) & \(1.110\) & \(1.64\times10^{-03}\)  & \(2.37\times10^{-15}\) \\
& & & & & LSALM       & \(-424.5024\) & \(0.8004\) & \(565\) & \(0.272\) & \(4.81\times10^{-04}\) & \(6.59\times10^{-05}\) \\
& & & & & SLPG        & \(-424.4652\) & \(0.8004\) & \(165\) & \(0.157\) & \(9.52\times10^{-04}\) & \(8.39\times10^{-15}\) \\
& & & & & IPGLanding   & \(-424.4652\) & \(0.8004\) & \(180\) & \(0.096\) & \(5.33\times10^{-04}\) & \(6.76\times10^{-10}\) \\
\bottomrule
\end{tabular}}
\endgroup
\caption{Comparison on the Colon data set.}
\label{tab:comparison-realworld-tol1e-4}
\end{table}

\emph{Algorithmic parameters.}
We implement SLPG in MATLAB based on PySTOP v0.2.2
and LSALM following~\citet{zhu2026primal}.
We set \(t_0=1/\|L\|_2\) for these comparisons.

For PGLanding, we set
\((t,\varepsilon,\lambda)=(t_0,0.5,0.5)\),
\(\rho=10^{-3}\), \(\eta_0=1\), \(\beta_{\mathrm{ls}}=0.5\),
and \(\vartheta_{\mathrm{P}}=1.1\).
In the numerical implementation, each tangential proximal subproblem
is solved by a regularized semismooth Newton method with an adaptive
residual tolerance between \(10^{-12}\) and \(10^{-7}\) and at most
20 inner iterations. This approximates the exact subproblem solution
used in the PGLanding convergence analysis.
For IPGLanding, we use the multiplier update
in~\cref{iprox_dual_update}, initialize
\(\Gamma_{0,0}=0_{p\times p}\), and set
\(t=2t_0\), \(\varepsilon=0.5\), \(\lambda=5/12\),
\(\eta_0=1.20\), \(\theta_{\mathrm{d}}=1.95\),
\(\omega_{\mathrm{mem}}=0.95\), \(\rho=c=10^{-3}\),
\(\beta_{\mathrm{ls}}=0.4\), and \(\vartheta_{\mathrm{IP}}=1.1\).

For the comparison runs, ManPG-Ada~\citep{chen2020proximal} uses the
parameter settings supplied in the authors' official implementation.
For SLPG, we use the default parameter settings of the reference
implementation, except that the Barzilai--Borwein stepsize
\citep{barzilai1988two} is capped at \(2.5t_0\) to improve numerical
stability. No final polar-decomposition postprocessing is applied.
For SOC, we set the penalty parameter to \(\|L\|_2\).
Following~\citet{zhu2026primal}, RADMM uses
\(\gamma_{\mathrm{R}}=10^{-12}\), \(\rho_{\mathrm{R}}=\|L\|_2\),
and \(\eta_{\mathrm{R}}=(2\|L\|_2)^{-1}\).
For LSALM, we adopt the parameter settings in~\citet{zhu2026primal} and set
\(\rho_{\mathrm{LS}}=10\), \(r_{\mathrm{LS}}=15\),
\(\alpha_{\mathrm{LS}}=\operatorname{round}(0.07\sqrt{np})\),
\(\beta_{\mathrm{LS}}=0.5\), \(\lambda_{\mathrm{LS}}=t_0\),
\(\epsilon_{\mathrm{d}}=10^{-10}\),
\(R_X^{\operatorname{op}}=10\), and \(R_Y=1000\).
We use these settings on the random instances. To facilitate convergence
on the structured synthetic and Colon instances, we set
\(r_{\mathrm{LS}}=25\) for the former and
\((\rho_{\mathrm{LS}},\alpha_{\mathrm{LS}})=(100,600)\) for the latter,
leaving the other parameters unchanged.

The results in
\cref{tab:comparison-random-n1000-m100-tol1e-4,tab:comparison-synthetic-tol1e4,tab:comparison-realworld-tol1e-4}
show that all seven methods terminate before reaching the iteration limit on all 11 instances
and satisfy both the stopping conditions and
\(\|X-X_{\mathrm{ref}}\|_{\mathrm{F}}<0.1\).
IPGLanding achieves the shortest CPU times across the tested instances.
LSALM also has a low per-iteration computational cost.
The lower running times of IPGLanding relative to PGLanding under the same
stopping criteria support the practical value of the inexact proximal
approach on these instances.

ManPG-Ada, RADMM, and SOC explicitly enforce orthogonality through
retraction or orthogonal projection, yielding feasibility residuals on the
order of \(10^{-16}\) to \(10^{-14}\) in these experiments.
SLPG applies a Newton--Schulz-type normal correction after each tangential
step and attains feasibility residuals on the order of
\(10^{-16}\) to \(10^{-11}\), without final orthogonalization.
The feasibility residuals are on the order of
\(10^{-10}\) to \(10^{-8}\) for IPGLanding and
\(10^{-10}\) to \(10^{-7}\) for PGLanding. LSALM returns larger
feasibility residuals, on the order of \(10^{-6}\) to \(10^{-5}\).

\FloatBarrier
\section{Conclusion}
\label{sec:conclusion}
This work developed three retraction-free methods for nonsmooth composite
optimization over the Stiefel manifold.  All three combine an (approximately) tangential direction to optimize the objective with a normal direction to reduce infeasibility.
 SubLanding uses a tangential subgradient direction, and PGLanding solves a tangential proximal subproblem. IPGLanding solves the proximal subproblem approximately and applies an explicit correction to control violations of the tangency condition.
Under the respective assumptions, we established outer-iteration complexity
bounds for stationarity and feasibility. For SubLanding with \(\alpha=0\),
we further proved local \(R\)-linear convergence under local weak sharpness
and the stated initialization, parameter, and stepsize conditions.
For IPGLanding, the KL property and an additional asymptotic Bregman-error
condition yield finite length and convergence of the primal sequence to a
first-order stationary point.

Numerical experiments on SPCA illustrated the effects of normal-direction
scaling and stepsize choices in SubLanding. On the tested instances,
IPGLanding achieved favorable running times, while both proximal methods
maintained small feasibility residuals.  Future work includes extending Landing methods to nonsmooth composite optimization under general equality constraints. Another direction is to design stochastic Landing methods for nonsmooth problems while controlling both stationarity and feasibility.
 
\begin{samepage}
\acks{BG was supported by the National Key R\&D Program of China
(grant 2023YFA1009300) and the National Natural Science Foundation of China (grant 12288201).
RG was supported by the National Key R\&D Program of China (grant
2022YFA1003800), the National Natural Science Foundation of China (grant
12671364), and the Natural Science Foundation of Tianjin (grant
25JCJQJC00300).
The authors declare no competing interests.
}
\end{samepage}

\appendix
\crefalias{section}{appendix}
\crefalias{subsection}{appendix}
\crefalias{subsubsection}{appendix}

\section{Technical Proofs}
\label{app:technical-proofs}
This appendix collects the proofs and auxiliary results used in the main
text.
Throughout the appendix, we use the residual notation
\[
E(X):=X^\top X-I_p.
\]
For proofs involving an iterate sequence \(\{X_k\}\), we write
\(E_k:=E(X_k)\).

\subsection{Layer-Orbit Identity}
\label{app:layer_orbit_identity}

\begin{proposition}
\label[proposition]{prop:layer_orbit_identity}
Let \(X\in\R_*^{n\times p}\). For every \(Y\in\R^{n\times p}\),
\[
Y^\top Y=X^\top X
\quad\Longleftrightarrow\quad
Y=QX\text{ for some }Q\in\mathrm O(n).
\]
Equivalently, the  layered manifold \(\St_{X^\top X}\) coincides with the
left orthogonal orbit \(\{QX:Q\in\mathrm O(n)\}\).
\end{proposition}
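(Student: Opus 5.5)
The reverse implication is immediate: if \(Y=QX\) with \(Q\in\mathrm O(n)\), then \(Y^\top Y=X^\top Q^\top QX=X^\top X\). All the work is in the forward implication, where I would construct \(Q\) explicitly from polar-type factors of \(X\) and \(Y\).

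\emph{Construction.} Assume \(Y^\top Y=X^\top X\) and set \(S=(X^\top X)^{1/2}\), which is symmetric positive definite because \(X\in\R_*^{n\times p}\). Define
\[
U_X=XS^{-1},\qquad U_Y=YS^{-1}.
\]
Then
\[
U_X^\top U_X=S^{-1}X^\top XS^{-1}=I_p,\qquad U_Y^\top U_Y=S^{-1}Y^\top YS^{-1}=I_p,
\]
so \(U_X,U_Y\in\St(n,p)\). (In particular \(Y\) also has full column rank.)

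\emph{Completion and definition of \(Q\).} Complete both to orthogonal matrices: choose \(U_X^\perp,U_Y^\perp\in\R^{n\times(n-p)}\) whose columns form orthonormal bases of the orthogonal complements of the ranges of \(U_X\) and \(U_Y\). This is possible by Gram--Schmidt, and is vacuous when \(n=p\). Then
\[
P_X=[U_X\;U_X^\perp],\qquad P_Y=[U_Y\;U_Y^\perp]
\]
lie in \(\mathrm O(n)\). Set \(Q=P_YP_X^\top\in\mathrm O(n)\). Since \(P_X^\top U_X=\begin{bmatrix}I_p\\0\end{bmatrix}\), we get \(QU_X=U_Y\). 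Multiplying on the right by \(S\) gives \(QX=Y\).

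\emph{Set equality.} The two implications together show that \(\St_{X^\top X}=\{QX:Q\in\mathrm O(n)\}\).

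There is no real obstacle. The only points needing care are that \(S\) is invertible, which is where full column rank is used, and that the orthogonal completion exists.
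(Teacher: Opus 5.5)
Your proof is correct and follows essentially the same route as the paper. Both arguments normalize \(X\) and \(Y\) by \((X^\top X)^{-1/2}\) to obtain Stiefel points, complete each to an orthogonal matrix, set \(Q=[U_Y\;U_Y^\perp][U_X\;U_X^\perp]^\top\), and obtain the reverse inclusion from \(Q^\top Q=I_n\).
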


\begin{proof}
To verify the set equality in~\cref{eq:layer_orbit_identity}, first observe
that \((QX)^\top(QX)=X^\top X\) for every \(Q\in\mathrm O(n)\), so
\(QX\in\St_{X^\top X}\). Conversely, let \(Y\in\St_{X^\top X}\) and set
\(G=X^\top X=Y^\top Y\succ0\). Then
\(U=XG^{-1/2}\) and \(V=YG^{-1/2}\) belong to \(\St(n,p)\). Choose
orthogonal completions \([U\ U_\perp],[V\ V_\perp]\in\mathrm O(n)\) and set
\(Q=[V\ V_\perp][U\ U_\perp]^\top\). Since \(Q\in\mathrm O(n)\) and
\(QU=V\), we have \(QX=Y\). Hence every point of
\(\St_{X^\top X}\) is obtained in this way. The same calculation gives
\(E(QX)=X^\top Q^\top QX-I_p=E(X)\).
\end{proof}

\subsection{Proof of the Safe-Region Objective Bounds}
\label{app:safe_region_objective_bounds}

\begin{proof}
For every \(X\in\St(n,p)^\varepsilon\), the eigenvalues of
\(X^\top X\) lie in \([1-\varepsilon,1+\varepsilon]\). Consequently,
\[
\sigma_{\min}(X)\geq\sqrt{1-\varepsilon},\quad
\|X\|_2\leq\sqrt{1+\varepsilon},\quad
\|X\|_{\mathrm{F}}\leq\sqrt{p(1+\varepsilon)}.
\]
Every line segment joining two points of \(\St(n,p)^\varepsilon\) lies in
the ball of radius \(\sqrt{p(1+\varepsilon)}\). Hence every point \(Z\) on
such a segment satisfies
\[
\|\nabla h(Z)\|_{\mathrm{F}}
\leq
\|\nabla h(0)\|_{\mathrm{F}}+L_h\|Z\|_{\mathrm{F}}
\leq
\|\nabla h(0)\|_{\mathrm{F}}+L_h\sqrt{p(1+\varepsilon)}.
\]
Integration along the segment between \(X\) and \(Y\), together with the
\(L_r\)-Lipschitz continuity of \(r\), gives
\[
|f(X)-f(Y)|
\leq
\bigl(\|\nabla h(0)\|_{\mathrm{F}}+L_h\sqrt{p(1+\varepsilon)}+L_r\bigr)
\|X-Y\|_{\mathrm{F}}.
\]
For any \(F\in\partial f(X)\),
\cref{composite_subdifferential} gives
\(F=\nabla h(X)+\xi\) for some \(\xi\in\partial r(X)\). Therefore,
\cref{nonsmooth_subgradient_bound} and the same gradient estimate yield
\[
\|F\|_{\mathrm{F}}
\leq
\|\nabla h(0)\|_{\mathrm{F}}+L_h\sqrt{p(1+\varepsilon)}+L_r
=L_f^\varepsilon.
\]
This proves~\cref{safe_region_objective_bounds}.
\end{proof}

\subsection{Proof of \texorpdfstring{\cref{prop:orthogonal_stationarity}}{Proposition~\ref{prop:orthogonal_stationarity}}}
\label{app:orthogonal_stationarity_proof}

\begin{proof}
We first prove the stationarity equivalence. Let
\(F=\nabla h(X)+\xi\). If~\cref{explicit_nonsmooth_KKT} holds, then
\(F=2X\Gamma\) with \(\Gamma\) symmetric, so
\(\skew(FX^\top)=0\). Conversely, if \(E(X)=0\) and
\(0\in\partial_{\St_{X^\top X}}^g f(X)\), then
\(F-XF^\top X=0\) for some \(\xi\in\partial r(X)\). Multiplication by
\(X^\top\) shows that \(F^\top X\) is symmetric. Thus
\(\Gamma=F^\top X/2\) satisfies~\cref{explicit_nonsmooth_KKT}.

We next establish the claims involving the surrogate.
Let \(\widehat Q=\widehat Q_X\) and
\(\widehat X=\widehat QX\). Since \(\widehat Q\) is orthogonal,
\(\widehat X\in\St_{X^\top X}\) and \(E(\widehat X)=E(X)\). Also,
\(\|X\|_2\leq\sqrt{1+\varepsilon}\) gives
\[
\|\widehat X-X\|_{\mathrm{F}}
\leq\sqrt{1+\varepsilon}\|\widehat Q-I_n\|_{\mathrm{F}}
=\frac\mu2\Theta_\mu(X).
\]
Because \(X\) has full column rank, the
linear map \(A\mapsto AX\) is surjective: for every
\(Y\in\R^{n\times p}\), the choice
\(A=Y(X^\top X)^{-1}X^\top\) gives \(AX=Y\). The exact chain rule
\citep[Chapter~10]{rockafellar1998variational} for the regular function
\(f=h+r\) therefore gives
\(\partial_Q f(QX)=\{FX^\top:F\in\partial f(QX)\}\).
The nonsmooth Fermat rule on \(\mathrm O(n)\) yields
\(F_X\in\partial f(\widehat X)\) such that
\begin{equation}\label{app_orthogonal_projected_fermat}
0
=
\operatorname{Proj}_{\mathrm T_{\widehat Q}\mathrm O(n)}
\left(F_XX^\top+\frac1\mu(\widehat Q-I_n)\right).
\end{equation}
For \(Q\in\mathrm O(n)\),
\(\operatorname{Proj}_{\mathrm T_Q\mathrm O(n)}(Z)=Q\skew(Q^\top Z)\).
Applying this identity to~\cref{app_orthogonal_projected_fermat} and
conjugating the resulting skew-symmetric part by \(\widehat Q\) give
\[
\begin{aligned}
0&=\skew\left(F_X\widehat X^\top+\mu^{-1}(I_n-\widehat Q^\top)\right),\\
2\skew(F_X\widehat X^\top)\widehat X
&=-\mu^{-1}(\widehat Q-\widehat Q^\top)\widehat X.
\end{aligned}
\]
The left-hand side of the second identity belongs to
\(\partial_{\St_{\widehat X^\top \widehat X}}^g f(\widehat X)\) by
\cref{canonical_riemannian_subdifferential}. Therefore,
\[
\dist\left(0,\partial_{\St_{\widehat X^\top \widehat X}}^g f(\widehat X)\right)
\leq\frac1\mu\|(\widehat Q-\widehat Q^\top)\widehat X\|_{\mathrm{F}}
\leq\frac{2\sqrt{1+\varepsilon}}{\mu}\|\widehat Q-I_n\|_{\mathrm{F}}=\Theta_\mu(X).
\]
If \(\Theta_\mu(X)=0\), then \(\widehat Q=I_n\) and
\(\widehat X=X\). The second identity then gives an element equal to zero
in \(\partial_{\St_{X^\top X}}^g f(X)\).
If \(\Theta_\mu(X)=0\) and \(E(X)=0\), this conclusion and
the stationarity equivalence established above give the KKT system in
\cref{def:first_order_stationarity}. Conversely, assume that \(X\) satisfies
this KKT system. Then \(E(X)=0\), and there is \(F\in\partial f(X)\) such
that \(FX^\top\) is symmetric. For every \(Q\in\mathrm O(n)\), set
\(\Delta=Q-I_n\). Since
\(\Delta+\Delta^\top=-\Delta\Delta^\top\),
\(\|FX^\top\|_2\leq L_f^\varepsilon\), and \(\|X\|_2=1\), weak convexity
gives
\[
\begin{aligned}
f(QX)-f(X)
&\geq\langle F,\Delta X\rangle-\frac{L_h}{2}\|\Delta X\|_{\mathrm{F}}^2
\geq-\frac{L_f^\varepsilon+L_h}{2}\|\Delta\|_{\mathrm{F}}^2,\\
f(QX)+\frac{1}{2\mu}\|Q-I_n\|_{\mathrm{F}}^2
&\geq f(X)+\frac12(\mu^{-1}-L_h-L_f^\varepsilon)
\|Q-I_n\|_{\mathrm{F}}^2.
\end{aligned}
\]
Therefore, if \(\mu<1/(L_h+L_f^\varepsilon)\), then \(I_n\) is the unique
minimizer in~\cref{orthogonal_envelope}, and hence \(\Theta_\mu(X)=0\).
Together with \(E(X)=0\), this proves the stated equivalence.
\end{proof}

\subsection{Proof of \texorpdfstring{\cref{prop:sublanding_exact_lambda_range}}{Proposition~\ref{prop:sublanding_exact_lambda_range}}}
\label{app:normal_contraction_proof}

\begin{proof}
Write \(\kappa_\varepsilon(\zeta)=1-q_\varepsilon(\zeta)\).
Write \(E=E(X)\). Direct expansion gives
\(E(Y_\zeta(X))=(1-2\zeta)E+(\zeta^2-2\zeta)E^2+\zeta^2E^3\).
If
\(E=U\operatorname{diag}(s_1,\ldots,s_p)U^\top\), then
\(|s_i|\leq\varepsilon\) and
\[
E(Y_\zeta(X))
=
U\operatorname{diag}\left(
s_i\{1-\zeta(1+s_i)(2-\zeta s_i)\}
\right)_{i=1}^pU^\top.
\]
Taking Frobenius norms proves~\cref{exact_normal_contraction}.
Set \(a_\lambda(s)=\lambda(1+s)(2-\lambda s)\). Because \(0\in[-\varepsilon,\varepsilon]\), uniform contraction implies
\(|1-2\lambda|<1\), hence \(0<\lambda<1\). In this interval
\(a_\lambda(s)>0\), and
\(|1-a_\lambda(s)|<1\) is equivalent to \(a_\lambda(s)<2\).
The function \(a_\lambda(s)=2\lambda+\lambda(2-\lambda)s-\lambda^2s^2\) is concave and has vertex
\(s_{\mathrm{v}}=(2-\lambda)/(2\lambda)\). This vertex belongs to
\([-\varepsilon,\varepsilon]\) exactly when
\(\lambda\geq\lambda_{\mathrm{v}}(\varepsilon)=2/(1+2\varepsilon)\). For
\(0<\lambda<\lambda_{\mathrm{v}}(\varepsilon)\), the maximum is
\(a_\lambda(\varepsilon)\), which reaches \(2\) at its smaller positive root
\(\lambda_{\mathrm{e}}(\varepsilon)=2/(1+\varepsilon+\sqrt{1-\varepsilon^2})\).
The relation
\(\lambda_{\mathrm{e}}(\varepsilon)\leq\lambda_{\mathrm{v}}(\varepsilon)\)
is equivalent to \(\varepsilon\leq1/\sqrt2\). If instead
\(\varepsilon\geq1/\sqrt2\), then
\(\lambda_{\mathrm{v}}(\varepsilon)\leq\lambda_{\mathrm{e}}(\varepsilon)\), and for
\(\lambda\geq\lambda_{\mathrm{v}}(\varepsilon)\), one has
\(\max_{|s|\leq\varepsilon}a_\lambda(s)=a_\lambda(s_{\mathrm{v}})=(\lambda+2)^2/4\).
This value reaches \(2\) at
\(\lambda=2(\sqrt2-1)\). These two cases prove the exact range.
They also show \(\lambda\varepsilon<1\). Finally, assume \(0<\zeta\leq1/2\), and write \(r_\zeta(s)=1-\zeta(1+s)(2-\zeta s)\). 
Its derivative is \(r_\zeta'(s)=\zeta(2\zeta s+\zeta-2)<0\) on \([-\varepsilon,\varepsilon]\). 
Furthermore, \(r_\zeta(-\varepsilon)>0\) and
\(r_\zeta(-\varepsilon)+r_\zeta(\varepsilon)=2(1-2\zeta+\zeta^2\varepsilon^2)\geq0\).
Thus \(q_\varepsilon(\zeta)=r_\zeta(-\varepsilon)
=1-\zeta(1-\varepsilon)(2+\zeta\varepsilon)\) and
\(\kappa_\varepsilon(\zeta)=\zeta(1-\varepsilon)
(2+\zeta\varepsilon)\geq2(1-\varepsilon)\zeta\).
\end{proof}

\subsection{Proof of \texorpdfstring{\cref{thm:alpha_sublanding_complexity}}{Theorem~\ref{thm:alpha_sublanding_complexity}}}
\label{app:sublanding_master_descent_proof}

\begin{proof}
The safe-region subgradient bound in
\cref{safe_region_objective_bounds} gives
\begin{equation}\label{unified_tangential_bounds}
\|W_k\|_{\mathrm{F}}\leq C_W=2L_f^\varepsilon\sqrt{1+\varepsilon},
\quad
\|W_kX_k\|_{\mathrm{F}}\leq B_{\mathrm{T}}=2L_f^\varepsilon(1+\varepsilon).
\end{equation}
Set \(C_{\mathrm{mix}}=2B_{\mathrm{T}}\sqrt{1+\varepsilon}\).
The constants in the theorem may be chosen as
\begin{equation*}
c_{\mathrm{N}}
=
\begin{cases}
\kappa_\varepsilon(\lambda)/\lambda,&\alpha=0,\\
2(1-\varepsilon),&0<\alpha\leq1,
\end{cases}
\quad
\eta_{\mathrm{N}}(\alpha)
=
\begin{cases}
+\infty,&\alpha=0,\\
(2\lambda)^{-1/\alpha},&0<\alpha\leq1,
\end{cases}
\end{equation*}
and
\begin{equation*}
\eta^*
=
\min\left\{
\eta_{\mathrm{N}}(\alpha),
\frac{c_{\mathrm{N}}}{2C_{\mathrm{mix}}},
\left(
\frac{\varepsilon c_{\mathrm{N}}\lambda}{2B_{\mathrm{T}}^2}
\right)^{1/(2-\alpha)}
\right\}>0.
\end{equation*}
Thus, \(\eta_k\leq\eta_{\mathrm{N}}(\alpha)\) implies
\(\kappa_\varepsilon(\zeta_k)
\geq c_{\mathrm{N}}\lambda\eta_k^\alpha\). 
For the descent argument, let
\(\widetilde X_k=X_k-\eta_kW_kX_k\) and define
\begin{equation*}
\Omega=
\left\{X:\|X\|_2\leq
\sqrt{(1+\varepsilon)\bigl(1+(\eta^*)^2C_W^2\bigr)}\right\},
\quad
\widetilde L_f=\max_{X\in\Omega}\|\nabla h(X)\|_{\mathrm{F}}+L_r.
\end{equation*}
The safe-region and tangential bounds imply that \(X_k\),
\(\widetilde X_k\), and \(Q\widetilde X_k\), for every
\(Q\in\mathrm O(n)\), lie in \(\Omega\). Hence \(\widetilde L_f\) is a
common Lipschitz constant for \(f\) at the matrices used below. Finally, set
\begin{equation*}
\mu^*
=\frac{1}{2\{L_h(1+\varepsilon)
+L_f^\varepsilon\sqrt{1+\varepsilon}\}}>0,
\quad
\nu^*
=\frac{4\widetilde L_f\sqrt{1+\varepsilon}}{c_{\mathrm{N}}}>0.
\end{equation*}
Expansion of~\cref{sublanding_iteration}, using
\(X_k^\top W_k^{}X_k^{}+(X_k^\top W_k^{}X_k^{})^\top=0\), gives
\begin{equation}\label{unified_residual_bound}
\|E_{k+1}\|_{\mathrm{F}}
\leq
\left(1-\kappa_\varepsilon(\zeta_k)
+C_{\mathrm{mix}}\eta_k\zeta_k\right)\|E_k\|_{\mathrm{F}}
+B_{\mathrm{T}}^2\eta_k^2.
\end{equation}
The definitions of \(c_{\mathrm{N}}\), \(\eta_{\mathrm{N}}(\alpha)\), and
\(\eta^*\) imply that, whenever \(\|E_k\|_{\mathrm{F}}\leq\varepsilon\),
\[
\begin{aligned}
\kappa_\varepsilon(\zeta_k)
&\geq c_{\mathrm{N}}\zeta_k=c_{\mathrm{N}}\lambda\eta_k^\alpha,\quad
C_{\mathrm{mix}}\eta_k\zeta_k\leq\frac{\kappa_\varepsilon(\zeta_k)}2,\quad
B_{\mathrm{T}}^2\eta_k^2\leq\frac{\varepsilon\kappa_\varepsilon(\zeta_k)}2,\\
\|E_{k+1}\|_{\mathrm{F}}
&\leq
\left(1-\frac{\kappa_\varepsilon(\zeta_k)}2\right)\|E_k\|_{\mathrm{F}}
+B_{\mathrm{T}}^2\eta_k^2
\leq
\left(1-\frac{\kappa_\varepsilon(\zeta_k)}2\right)\varepsilon
+\frac{\varepsilon\kappa_\varepsilon(\zeta_k)}2
=\varepsilon.
\end{aligned}
\]
Induction from \(X_0\in\St(n,p)^\varepsilon\) proves the first conclusion.

For the descent estimate, suppress the iteration index and write
\(X=X_k\), \(F=F_k\), \(\eta=\eta_k\),
\(\zeta=\zeta_k\), \(W=W_k\), \(\widehat Q=\widehat Q_{X_k}\), and
\(\Delta=\widehat Q-I_n\).
Let \(\widetilde X=(I_n-\eta W)X,Q^{\mathrm{cmp}}=\widehat Q\exp(\eta W)\). 
Since \(W^\top=-W\), the matrix \(\exp(\eta W)\) is orthogonal, and hence
\(Q^{\mathrm{cmp}}\in\mathrm O(n)\).
Let \(R^{\exp}=\exp(\eta W)-I_n-\eta W\). Applying the scalar integral
remainder for \(e^{\mathrm i t}\) to the spectral decomposition of \(W\),
together with direct multiplication, gives
\begin{equation}\label{app_exponential_remainder}
\begin{aligned}
\|R^{\exp}\|_{\mathrm{F}}&\leq\frac12\eta^2\|W\|_{\mathrm{F}}^2,\\
Q^{\mathrm{cmp}}\widetilde X-\widehat QX
&=\widehat Q\bigl[\exp(\eta W)(I_n-\eta W)-I_n\bigr]X,\\
\exp(\eta W)(I_n-\eta W)-I_n
&=-\eta^2W^2+R^{\exp}-\eta R^{\exp}W.
\end{aligned}
\end{equation}
The bounds in~\cref{unified_tangential_bounds}, the restriction
\(\eta\leq\eta^*\), and~\cref{app_exponential_remainder} therefore give
\begin{equation}\label{app_comparison_point_remainder}
\|Q^{\mathrm{cmp}}\widetilde X-\widehat QX\|_{\mathrm{F}}
\leq C_2\eta^2,
\end{equation}
for some \(C_2>0\).
Also, \(Q^{\mathrm{cmp}}-I_n=\Delta+\eta\widehat QW+\widehat QR^{\exp}\). 
Since \(\|\Delta\|_{\mathrm{F}}\leq2\sqrt n\), expansion of the squared norm and
\cref{app_exponential_remainder} give
\begin{equation}\label{app_orthogonal_penalty_expansion}
\|Q^{\mathrm{cmp}}-I_n\|_{\mathrm{F}}^2
\leq
\|\Delta\|_{\mathrm{F}}^2
+2\eta\inner{\Delta}{\widehat QW}
+C_3\eta^2,
\end{equation}
for some \(C_3>0\).
Orthogonality of \(\widehat Q=I_n+\Delta\) and
\(W=FX^\top-XF^\top\) give
\[
\Delta+\Delta^\top=-\Delta\Delta^\top,\quad
\inner{\Delta}{\widehat QW}
=2\inner{\widehat QX-X}{F}
+\inner{\Delta\Delta^\top}{FX^\top}.
\]
Weak convexity, optimality of \(\widehat Q\) in
\cref{orthogonal_envelope}, and the safe-region bounds imply
\[
\begin{aligned}
\inner{\widehat QX-X}{F}
&\leq f(\widehat QX)-f(X)+\frac{L_h}{2}\|\widehat QX-X\|_{\mathrm{F}}^2\leq-\frac1{2\mu}\|\Delta\|_{\mathrm{F}}^2+\frac{L_h(1+\varepsilon)}2\|\Delta\|_{\mathrm{F}}^2,\\
\left|\inner{\Delta\Delta^\top}{FX^\top}\right|
&\leq L_f^\varepsilon\sqrt{1+\varepsilon}\|\Delta\|_{\mathrm{F}}^2.
\end{aligned}
\]
Thus, setting
\(A=L_h(1+\varepsilon)+L_f^\varepsilon\sqrt{1+\varepsilon}\), the preceding bounds give
\begin{equation}\label{app_critical_cross_bound}
\inner{\Delta}{\widehat QW}
\leq
-\left(\frac1\mu-A\right)\|\Delta\|_{\mathrm{F}}^2.
\end{equation}
Because \(Q^{\mathrm{cmp}}\in\mathrm O(n)\), it is a feasible choice in the
definition of \(f_\mu(\widetilde X)\). Hence
\[
f_\mu(\widetilde X)
\leq f(Q^{\mathrm{cmp}}\widetilde X)
+\frac{1}{2\mu}\|Q^{\mathrm{cmp}}-I_n\|_{\mathrm{F}}^2.
\]
By~\cref{app_comparison_point_remainder} and Lipschitz continuity of \(f\),
\(f(Q^{\mathrm{cmp}}\widetilde X)
\leq f(\widehat QX)+\widetilde L_fC_2\eta^2\).
Since
\(f_\mu(X)=f(\widehat QX)+\|\Delta\|_{\mathrm{F}}^2/(2\mu)\), combining these
inequalities with
\cref{app_orthogonal_penalty_expansion,app_critical_cross_bound} gives
\[
f_\mu(\widetilde X)
\leq
f_\mu(X)
-\eta\left(\frac1{\mu^2}-\frac A\mu\right)\|\Delta\|_{\mathrm{F}}^2
+C_4\eta^2.
\]
Here \(C_4=\widetilde L_fC_2+C_3/(2\mu)\).
The upper bound on \(\mu\) gives
\(\mu^{-2}-A/\mu\geq1/(2\mu^2)\). By~\cref{Theta_definition},
\(\|\Delta\|_{\mathrm{F}}^2/(2\mu^2)=\Theta_\mu^2(X)/[8(1+\varepsilon)]\). Therefore,
\begin{equation}\label{app_tangential_envelope_descent}
\frac{\eta}{8(1+\varepsilon)}\Theta_\mu^2(X)
\leq
f_\mu(X)-f_\mu(\widetilde X)+C_4\eta^2.
\end{equation}
The set \(\Omega\) is invariant under left multiplication by orthogonal
matrices. Moreover, \(\widetilde X\in\Omega\) by construction, and the
safe-region conclusion already proved gives
\(X_{k+1}\in\St(n,p)^\varepsilon\subseteq\Omega\). For arbitrary
\(U,V\in\Omega\), let \(Q_V\) be a minimizer in the definition of
\(f_\mu(V)\). Orthogonal invariance of \(\Omega\) and the
\(\widetilde L_f\)-Lipschitz continuity of \(f\) on \(\Omega\) give
\[
f_\mu(U)-f_\mu(V)
\leq f(Q_VU)-f(Q_VV)
\leq\widetilde L_f\|U-V\|_{\mathrm{F}}.
\]
Exchanging \(U\) and \(V\) shows that \(f_\mu\) is
\(\widetilde L_f\)-Lipschitz continuous on \(\Omega\).
The normal displacement is
\(X_{k+1}-\widetilde X=-\zeta XE\), so the envelope Lipschitz bound
gives
\begin{equation}\label{app_normal_envelope_change}
f_\mu(X_{k+1})
\leq
f_\mu(\widetilde X)
+\widetilde L_f\zeta\sqrt{1+\varepsilon}\|E\|_{\mathrm{F}}.
\end{equation}
The preceding argument also gives
\(\|E_{k+1}\|_{\mathrm{F}}\leq[1-\kappa_\varepsilon(\zeta)/2]\|E\|_{\mathrm{F}}
+B_{\mathrm{T}}^2\eta^2\).
Adding \(\nu\) times this bound to
\cref{app_tangential_envelope_descent,app_normal_envelope_change}
yields
\[
\mathcal L_{\mu,\nu}(X_{k+1})\leq\mathcal L_{\mu,\nu}(X)
-\frac{\eta}{8(1+\varepsilon)}\Theta_\mu^2(X)
-\left\{\frac\nu2\kappa_\varepsilon(\zeta)-\widetilde L_f\zeta\sqrt{1+\varepsilon}\right\}\|E\|_{\mathrm{F}}
+(C_4+\nu B_{\mathrm{T}}^2)\eta^2.
\]
Since \(\kappa_\varepsilon(\zeta)\geq c_{\mathrm{N}}\zeta\) and
\(\nu\geq\nu^*
=4\widetilde L_f\sqrt{1+\varepsilon}/c_{\mathrm{N}}\),
\[
\frac{\nu}{2}\kappa_\varepsilon(\zeta)
-\widetilde L_f\zeta\sqrt{1+\varepsilon}
\geq\frac{\nu}{4}\kappa_\varepsilon(\zeta)
\geq\frac{\nu c_{\mathrm{N}}\lambda}{4}\eta^\alpha.
\]
Consequently, with \(C_{\mathrm{S}}=C_4+\nu B_{\mathrm{T}}^2\), rearrangement gives
\begin{equation}\label{alpha_sublanding_master_descent}
\frac{\eta_k}{8(1+\varepsilon)}\Theta_\mu^2(X_k)
+\frac{\nu c_{\mathrm{N}}\lambda}{4}\eta_k^\alpha\|E_k\|_{\mathrm{F}}
\leq
\mathcal L_{\mu,\nu}(X_k)
-\mathcal L_{\mu,\nu}(X_{k+1})
+C_{\mathrm{S}}\eta_k^2.
\end{equation}
The quantity \(\Delta_{\mathrm{S},0}\) is finite because the safe region is compact
and \(\mathcal L_{\mu,\nu}\) is continuous. Summing
\cref{alpha_sublanding_master_descent} over \(k=0,\ldots,K-1\) telescopes the
merit-function terms and yields
\[
\sum_{k=0}^{K-1}
\left\{\frac{\eta_k}{8(1+\varepsilon)}\Theta_\mu^2(X_k)
+\frac{\nu c_{\mathrm{N}}\lambda}{4}\eta_k^\alpha\|E(X_k)\|_{\mathrm{F}}\right\}
\leq
\Delta_{\mathrm{S},0}+C_{\mathrm{S}}\sum_{k=0}^{K-1}\eta_k^2.
\]
Taking the minimum on the left proves
\cref{alpha_general_best_iterate}. At a minimizing index \(j_K\), both terms
are nonnegative, so discarding either one gives separate bounds on
\(\Theta_\mu(X_{j_K})\) and \(\|E(X_{j_K})\|_{\mathrm{F}}\). For part~(a),
\(\sum_{k=0}^{K-1}\eta_k^2=\eta_0^2\) and
\(\eta_{j_K}=\eta_0/\sqrt K\), which proves
\cref{alpha_stepsize_rates}. 
For part~(b), write the combined residual term as
\[
G_k
=\frac{\eta_k}{8(1+\varepsilon)}\Theta_\mu^2(X_k)
+\frac{\nu c_{\mathrm{N}}\lambda}{4}\eta_k^\alpha
\|E(X_k)\|_{\mathrm{F}},
\]
and set \(m=\lfloor K/2\rfloor\) for \(K\geq2\).
The safe region is compact and \(\mathcal L_{\mu,\nu}\) is continuous,
so its oscillation
\[
M_{\mathcal L}
=\max_{X\in\St(n,p)^\varepsilon}\mathcal L_{\mu,\nu}(X)
-\min_{X\in\St(n,p)^\varepsilon}\mathcal L_{\mu,\nu}(X)
\]
is finite. Summing~\cref{alpha_sublanding_master_descent} only over
\(k=m,\ldots,K-1\), and using \(K/m\leq3\), gives
\[
\sum_{k=m}^{K-1}G_k
\leq M_{\mathcal L}
+C_{\mathrm S}\eta_0^2\sum_{k=m}^{K-1}\frac1{k+1}
\leq M_{\mathcal L}+C_{\mathrm S}\eta_0^2\log(K/m)
\leq M_{\mathcal L}+C_{\mathrm S}\eta_0^2\log3
=:C_{\mathrm{tail}}.
\]
Because \(K-m\geq K/2\) and \(j_K\) minimizes \(G_k\) over all
\(0\leq k<K\),
\[
G_{j_K}\leq\min_{m\leq k<K}G_k
\leq\frac{C_{\mathrm{tail}}}{K-m}
\leq\frac{2C_{\mathrm{tail}}}{K}.
\]
Finally, \(\eta_{j_K}\geq\eta_0/\sqrt K\). Bounding the two
nonnegative terms in \(G_{j_K}\) separately yields
\cref{alpha_stepsize_rates} without logarithmic factors.
The stated complexity bounds follow by solving the resulting rate
inequalities for \(K\).
\end{proof}

\subsection{A Sufficient Condition for Local Weak Sharpness in SPCA}

In this subsection, \(\|U\|_\infty=\max_{i,j}|U_{ij}|\) denotes the
entrywise \(\ell_\infty\)-norm.
The following result gives concrete sufficient conditions for weak sharpness.
They are motivated by the restricted-injectivity and nondegenerate-source
characterization of \citet[Theorem~4.6]{fadili2023sharp} for linearly
constrained sparse recovery. That result does not apply directly to
the nonlinear Stiefel constraint, so the required estimate is proved directly
on the Stiefel manifold.

\begin{proposition}
\label[proposition]{prop:spca_local_weak_sharpness}
Consider \(f(X)=-\operatorname{tr}(X^\top LX)/2+\gamma\|X\|_1\),
where \(L\in\mathbb S^n\) and \(\gamma>0\). Let \(q\geq1\) be an integer and
\(\mathcal X_\star=\{X^{(1)},\ldots,X^{(q)}\}\subseteq\St(n,p)\)
be a finite set on which \(f\) is constant. For
\(Y\in\mathcal X_\star\), write
\(S_Y=\operatorname{supp}(Y)=\{(i,j):Y_{ij}\ne0\}\).
Assume that there is \(\tau_{\mathrm{sc}}>0\) such that every
\(Y\in\mathcal X_\star\) admits
\(G_Y\in\partial\|Y\|_1\) and
\(\Gamma_Y\in\mathbb S^p\) satisfying the KKT relation
\begin{equation}\label{eq:spca_weak_sharp_kkt}
-LY+\gamma G_Y-2Y\Gamma_Y=0,
\end{equation}
and the strict-complementarity condition
\(\|(G_Y)_{S_Y^c}\|_\infty\leq1-\tau_{\mathrm{sc}}\). Assume also the
restricted-injectivity condition
\begin{equation}\label{eq:spca_trivial_active_tangent}
\{H:Y^\top H+H^\top Y=0,\ H_{S_Y^c}=0\}=\{0\},
\end{equation}
for every \(Y\in\mathcal X_\star\). Then there are
\(\rho_{\mathrm{sh}},\sigma_{\mathrm{sh}}>0\) such that
\(f(X)-f(Y_0)\geq
\sigma_{\mathrm{sh}}\dist(X,\mathcal X_\star)\) for every
\(Y_0\in\mathcal X_\star\) and every feasible \(X\) satisfying
\(\dist(X,\mathcal X_\star)<\rho_{\mathrm{sh}}\).
\end{proposition}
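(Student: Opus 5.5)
Since \(\mathcal X_\star\) is finite, it suffices to prove a local estimate near each \(Y\in\mathcal X_\star\) separately. Take \(\rho_{\mathrm{sh}}\) at most half the minimal pairwise distance between the points of \(\mathcal X_\star\). Then every feasible \(X\) with \(\dist(X,\mathcal X_\star)<\rho_{\mathrm{sh}}\) has a unique nearest point \(Y\), and \(\dist(X,\mathcal X_\star)=\|X-Y\|_{\mathrm F}\). Because \(f\) is constant on \(\mathcal X_\star\), we have \(f(Y_0)=f(Y)\) for every \(Y_0\in\mathcal X_\star\). The task therefore reduces to the following: for each \(Y\), find \(\rho_Y,\sigma_Y>0\) with \(f(X)-f(Y)\geq\sigma_Y\|X-Y\|_{\mathrm F}\) whenever \(X\in\St(n,p)\) and \(\|X-Y\|_{\mathrm F}<\rho_Y\). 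Finally take the minimum over the finitely many \(Y\).

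Fix \(Y\) and write \(H=X-Y\) and \(S=S_Y\). Since \(X,Y\in\St(n,p)\), we have the exact identity \(Y^\top H+H^\top Y=-H^\top H\). Expand the smooth part exactly:
\[
-\tfrac12\operatorname{tr}(X^\top LX)+\tfrac12\operatorname{tr}(Y^\top LY)=-\inner{LY}{H}-\tfrac12\inner{H}{LH}.
\]
For the \(\ell_1\) part, \(|Y_{ij}+H_{ij}|\geq|Y_{ij}|+(G_Y)_{ij}H_{ij}\) on \(S\), because \((G_Y)_{ij}=\operatorname{sign}(Y_{ij})\) there. On \(S^c\) we have \(|H_{ij}|=(G_Y)_{ij}H_{ij}+(|H_{ij}|-(G_Y)_{ij}H_{ij})\geq (G_Y)_{ij}H_{ij}+\tau_{\mathrm{sc}}|H_{ij}|\), by strict complementarity. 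Summing, \(\gamma\|X\|_1-\gamma\|Y\|_1\geq\gamma\inner{G_Y}{H}+\gamma\tau_{\mathrm{sc}}\|H_{S^c}\|_1\). Now substitute the KKT relation \eqref{eq:spca_weak_sharp_kkt}, \(-LY+\gamma G_Y=2Y\Gamma_Y\). Symmetry of \(\Gamma_Y\) gives \(\inner{2Y\Gamma_Y}{H}=\inner{\Gamma_Y}{Y^\top H+H^\top Y}=-\inner{\Gamma_Y}{H^\top H}\). Altogether,
\[
f(X)-f(Y)\geq\gamma\tau_{\mathrm{sc}}\|H_{S^c}\|_1-C_Y\|H\|_{\mathrm F}^2,
\qquad C_Y=\|\Gamma_Y\|_2+\tfrac12\|L\|_2 .
\]

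The main obstacle is to show that \(\|H_{S^c}\|_1\) dominates \(\|H\|_{\mathrm F}\) up to a quadratic error. This is where restricted injectivity \eqref{eq:spca_trivial_active_tangent} enters. Consider the linear map \(\mathcal A(H)=(Y^\top H+H^\top Y,\,H_{S^c})\) from \(\R^{n\times p}\) to \(\mathbb S^p\times\R^{S^c}\). Condition \eqref{eq:spca_trivial_active_tangent} says exactly that \(\ker\mathcal A=\{0\}\). In finite dimensions this gives a constant \(c_Y>0\) with
\[
\|H\|_{\mathrm F}\leq c_Y\bigl(\|Y^\top H+H^\top Y\|_{\mathrm F}+\|H_{S^c}\|_{\mathrm F}\bigr).
\]
For feasible \(X\) the first term equals \(\|H^\top H\|_{\mathrm F}\leq\|H\|_{\mathrm F}^2\), and \(\|H_{S^c}\|_{\mathrm F}\leq\|H_{S^c}\|_1\). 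Hence \(\|H_{S^c}\|_1\geq c_Y^{-1}\|H\|_{\mathrm F}-\|H\|_{\mathrm F}^2\).

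Plugging this into the previous display yields
\[
f(X)-f(Y)\geq\frac{\gamma\tau_{\mathrm{sc}}}{c_Y}\|H\|_{\mathrm F}-(\gamma\tau_{\mathrm{sc}}+C_Y)\|H\|_{\mathrm F}^2 .
\]
Choose \(\rho_Y\) so that \((\gamma\tau_{\mathrm{sc}}+C_Y)\rho_Y\leq\gamma\tau_{\mathrm{sc}}/(2c_Y)\). Then for \(\|H\|_{\mathrm F}<\rho_Y\) we get \(f(X)-f(Y)\geq\sigma_Y\|H\|_{\mathrm F}\) with \(\sigma_Y=\gamma\tau_{\mathrm{sc}}/(2c_Y)\). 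Setting \(\sigma_{\mathrm{sh}}=\min_Y\sigma_Y\) and \(\rho_{\mathrm{sh}}=\min\{\min_Y\rho_Y,\tfrac12\min_{Y\neq Y'}\|Y-Y'\|_{\mathrm F}\}\) completes the argument. When \(q=1\), the separation term in this minimum is simply omitted.
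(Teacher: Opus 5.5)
Your proof is correct, and its skeleton matches the paper's. You use the same entrywise \(\ell_1\) subgradient estimate, with strict complementarity on \(S^c\), and the same exact expansion of the quadratic term. You also use the KKT relation with \(Y^\top H+H^\top Y=-H^\top H\) in the same way, reducing the first-order terms to \(-\inner{\Gamma_Y}{H^\top H}\), and you arrive at the same constant \(\|\Gamma_Y\|_2+\|L\|_2/2\).

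The one step where you argue differently is showing that the off-support part controls the whole displacement. The paper proves \(\|D_{S^c}\|_{\mathrm F}\geq c_{\mathrm{act}}\|D\|_{\mathrm F}\) for \(0<\|D\|_{\mathrm F}<\rho_0\) by contradiction:
\begin{itemize}
\item it normalizes a violating sequence;
\item it uses finiteness of \(\mathcal X_\star\) to pass to a subsequence with a fixed nearest point;
\item it extracts a unit-norm limit lying in the kernel excluded by the restricted-injectivity condition.
\end{itemize}
You instead read that condition as injectivity of the linear map \(\mathcal A(H)=(Y^\top H+H^\top Y,\,H_{S^c})\) and invoke its positive lower bound. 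You then control the first component quadratically through feasibility, using \(\|Y^\top H+H^\top Y\|_{\mathrm F}=\|H^\top H\|_{\mathrm F}\leq\|H\|_{\mathrm F}^2\).

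Your route is the quantitative form of the paper's compactness argument. It gives explicit constants: \(c_{\mathrm{act}}\) can be taken as \(1/(2c_Y)\), and \(\sigma_{\mathrm{sh}},\rho_{\mathrm{sh}}\) are expressed through the smallest singular value of \(\mathcal A\), \(\gamma\tau_{\mathrm{sc}}\), \(\|\Gamma_Y\|_2\), and \(\|L\|_2\). The paper's argument only asserts that \(c_{\mathrm{act}}\) and \(\rho_0\) exist. In exchange, it gets uniformity over \(\mathcal X_\star\) inside the same contradiction, whereas you take a minimum over the finitely many points afterwards.

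A minor remark: your separation step, using half the minimal pairwise distance, is harmless but unnecessary. Any nearest point \(Y\) already satisfies \(\|X-Y\|_{\mathrm F}=\dist(X,\mathcal X_\star)<\rho_{\mathrm{sh}}\leq\rho_Y\), which is how the paper proceeds.
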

The two assumptions have complementary roles. Strict complementarity puts
the selected \(\ell_1\)-subgradient strictly inside \([-1,1]\) at every
zero entry of \(Y\), so creating an off-support entry has a first-order cost.
Restricted injectivity rules out every nonzero feasible tangent direction
supported entirely on \(S_Y\). Therefore every nonzero nearby feasible
displacement has an off-support component to which the strict-complementarity
estimate applies.

The restricted-injectivity condition also requires
\(|S_Y|\leq p(p+1)/2\): the map \(H\mapsto Y^\top H+H^\top Y\)
must be injective from the \(|S_Y|\)-dimensional space of matrices
supported on \(S_Y\) into \(\mathbb S^p\).
Thus the fraction of zero entries must be at least
\(1-(p+1)/(2n)\), although this dimensional condition alone is not sufficient.

\begin{proof}
Choose a nearest \(Y\in\mathcal X_\star\), and set
\(D=X-Y\) and \(S=S_Y\). On \(S\), every entry of \(Y\) is nonzero, so
\((G_Y)_{ij}=\operatorname{sign}(Y_{ij})\). On \(S^c\), one has
\(Y_{ij}=0\) and \(|(G_Y)_{ij}|\leq1-\tau_{\mathrm{sc}}\). Applying the scalar
subgradient inequality on \(S\) and summing over all entries therefore gives
\[
\|X\|_1-\|Y\|_1-\inner{G_Y}{D}
\geq
\sum_{(i,j)\in S^c}
\bigl(|D_{ij}|-(G_Y)_{ij}D_{ij}\bigr)
\geq\tau_{\mathrm{sc}}\|D_{S^c}\|_1.
\]
The quadratic part of the objective satisfies
\[
-\frac12\operatorname{tr}(X^\top LX)
+\frac12\operatorname{tr}(Y^\top LY)
=-\inner{LY}{D}-\frac12\operatorname{tr}(D^\top LD).
\]
Moreover, the KKT relation~\cref{eq:spca_weak_sharp_kkt}, the symmetry of
\(\Gamma_Y\), and the feasibility identity
\[
Y^\top D+D^\top Y+D^\top D=0
\]
imply
\[
-\inner{LY}{D}+\gamma\inner{G_Y}{D}
=2\inner{Y\Gamma_Y}{D}
=\inner{\Gamma_Y}{Y^\top D+D^\top Y}
=-\inner{\Gamma_Y}{D^\top D}.
\]
Combining these three relations yields
\[
f(X)-f(Y)
\geq
\gamma\tau_{\mathrm{sc}}\|D_{S^c}\|_1
-\inner{\Gamma_Y}{D^\top D}
-\frac12\operatorname{tr}(D^\top LD)
\geq
\gamma\tau_{\mathrm{sc}}\|D_{S^c}\|_1
-C_\star\|D\|_{\mathrm{F}}^2,
\]
where
\(C_\star=\max_{Y\in\mathcal X_\star}\|\Gamma_Y\|_2+\|L\|_2/2<\infty\);
the last inequality follows from the standard spectral-norm bounds.
There are constants
\(c_{\mathrm{act}},\rho_0>0\), independent of the choice of
\(Y\in\mathcal X_\star\), such that
\(\|D_{S^c}\|_{\mathrm{F}}\geq c_{\mathrm{act}}\|D\|_{\mathrm{F}}\) whenever
\(0<\|D\|_{\mathrm{F}}<\rho_0\). Otherwise, there would be feasible matrices \(X_j\)
and nearest points \(Y_j\in\mathcal X_\star\) such that
\(D_j=X_j-Y_j\to0\) and
\(\|(D_j)_{S_{Y_j}^c}\|_{\mathrm{F}}/\|D_j\|_{\mathrm{F}}\to0\). Since
\(\mathcal X_\star\) is finite, pass to a subsequence on which \(Y_j=Y\).
A further subsequence of \(H_j=D_j/\|D_j\|_{\mathrm{F}}\) converges to a matrix
\(H\) with \(\|H\|_{\mathrm{F}}=1\). Dividing
\(Y^\top D_j+D_j^\top Y+D_j^\top D_j^{}=0\) by \(\|D_j\|_{\mathrm{F}}\) gives
\[
Y^\top H_j+H_j^\top Y+\|D_j\|_{\mathrm{F}} H_j^\top H_j^{}=0.
\]
Taking limits yields \(Y^\top H+H^\top Y=0\). The off-support ratio gives
\(H_{S_Y^c}=0\), whereas \(\|H\|_{\mathrm{F}}=1\). Thus \(H\) is a nonzero matrix
in the set on the left-hand side of
\cref{eq:spca_trivial_active_tangent}, contradicting that condition. Therefore,
using \(\|D_{S^c}\|_1\geq\|D_{S^c}\|_{\mathrm{F}}\),
\[
f(X)-f(Y)
\geq
(\gamma\tau_{\mathrm{sc}}c_{\mathrm{act}}-C_\star\|D\|_{\mathrm{F}})\|D\|_{\mathrm{F}}.
\]
Choose \(\rho_{\mathrm{sh}}\leq\rho_0\) such that
\(C_\star\rho_{\mathrm{sh}}\leq
\gamma\tau_{\mathrm{sc}}c_{\mathrm{act}}/2\), and set
\(\sigma_{\mathrm{sh}}=\gamma\tau_{\mathrm{sc}}c_{\mathrm{act}}/2\).
Since \(\|D\|_{\mathrm{F}}=\dist(X,\mathcal X_\star)\) and
\(f\) is constant on \(\mathcal X_\star\), the result follows.
\end{proof}

\subsection{Proof of \texorpdfstring{\cref{thm:sublanding_local_linear_convergence}}{Theorem~\ref{thm:sublanding_local_linear_convergence}}}
\label{app:sublanding_local_linear_proof}

For a differentiable map \(G\), \(\mathrm DG(X)\) denotes its Fr\'echet
derivative and \(\mathrm DG(X)[Z]\) its action on \(Z\).

\begin{proof}
For brevity, set
\(e_k=\|E_k\|_{\mathrm{F}}=\|X_k^\top X_k^{}-I_p\|_{\mathrm{F}}\) throughout this proof.
\proofstep{Step 1: comparison of the two distances.}
First, there are \(\rho_{\mathrm{align}}>0\) and
\(c_{\mathrm{align}}\geq1\) such that
\begin{equation}\label{eq:sublanding_orthogonal_distance_equivalence}
\dist(U,\mathcal X)
\leq
\omega(U)
\leq
c_{\mathrm{align}}\dist(U,\mathcal X),
\end{equation}
for every \(U\in\St(n,p)\) satisfying
\(\dist(U,\mathcal X)<\rho_{\mathrm{align}}\).
Fix \(A\in\mathcal X\). If \(p=n\), define
\(\mathcal C_A(U)=U\) for \(U\in\St(n,n)=\mathrm O(n)\). If \(p<n\), choose
an orthogonal completion \([A,A_\perp]\in\mathrm O(n)\). For
\(U\in\St(n,p)\) satisfying \(\|U-A\|_{\mathrm{F}}<1/2\), set
\(\widetilde U_\perp=(I_n-UU^\top)A_\perp\) and
\(U_\perp=\widetilde U_\perp
(\widetilde U_\perp^\top\widetilde U_\perp^{})^{-1/2}\). Since
\(A_\perp^\top A=0\),
\[
\|A_\perp^\top U\|_2
=\|A_\perp^\top(U-A)\|_2
\leq \|U-A\|_{\mathrm{F}}<\frac12.
\]
Moreover,
\[
\widetilde U_\perp^\top\widetilde U_\perp^{}
=I_{n-p}-(A_\perp^\top U)(A_\perp^\top U)^\top,
\quad
\lambda_{\min}(\widetilde U_\perp^\top\widetilde U_\perp^{})
\geq1-\|A_\perp^\top U\|_2^2>\frac34.
\]
Thus the inverse square root is well defined and
\(\mathcal C_A(U)=[U,U_\perp]\in\mathrm O(n)\) is a smooth orthogonal
completion. In either case, \(\mathcal C_A(U)\) is orthogonal and its first
\(p\) columns are \(U\). The defining expression extends smoothly to an
open neighborhood of \(A\) in \(\R^{n\times p}\). This is immediate
when \(p=n\), while for \(p<n\) it follows because the matrix inside the
inverse square root is positive definite at \(U=A\). Values of this
extension outside \(\St(n,p)\) need not be orthogonal. The extension is used
only to apply the Euclidean mean-value inequality. Since its domain is open,
choose \(\delta_A>0\)
such that \(\overline{B}(A,\delta_A)\) lies in this domain, and set
\(r_A=\min\{1/8,\delta_A/2\}\in(0,1/4)\). The derivative
is bounded on that compact ball, so the mean-value inequality yields a
constant \(c_A>0\) such that
\[
\|\mathcal C_A(U)-\mathcal C_A(Z)\|_{\mathrm{F}}
\leq c_A\|U-Z\|_{\mathrm{F}},
\]
for all \(U,Z\in\St(n,p)\) with
\(\|U-A\|_{\mathrm{F}}<2r_A\) and \(\|Z-A\|_{\mathrm{F}}<2r_A\). The sets
\(\{Z\in\mathcal X:\|Z-A\|_{\mathrm{F}}<r_A\}\) cover \(\mathcal X\). By compactness,
choose a finite subcover centered at \(A_1,\ldots,A_{N_{\mathrm{cov}}}\), then choose
\(0<\rho_{\mathrm{align}}<\min_{1\leq i\leq N_{\mathrm{cov}}}r_{A_i}\) and
\(c_{\mathrm{align}}=\max\{1,c_{A_1},\ldots,c_{A_{N_{\mathrm{cov}}}}\}\).
Now let \(U\) satisfy \(\dist(U,\mathcal X)<\rho_{\mathrm{align}}\). Since
\(\mathcal X\) is compact, choose \(Z\in\mathcal X\) such that
\(\|U-Z\|_{\mathrm{F}}=\dist(U,\mathcal X)\), and then choose \(i\) such that
\(\|Z-A_i\|_{\mathrm{F}}<r_{A_i}\). Then
\(\|U-A_i\|_{\mathrm{F}}\leq\|U-Z\|_{\mathrm{F}}+\|Z-A_i\|_{\mathrm{F}}<2r_{A_i}\), so \(U\) and
\(Z\) belong to the same completion chart. Set
\[
\begin{gathered}
Q=\mathcal C_{A_i}(Z)\mathcal C_{A_i}(U)^\top\in\mathrm O(n),
\quad QU=Z,\\
\|Q-I_n\|_{\mathrm{F}}
=\|\mathcal C_{A_i}(Z)-\mathcal C_{A_i}(U)\|_{\mathrm{F}}
\leq c_{\mathrm{align}}\|U-Z\|_{\mathrm{F}}.
\end{gathered}
\]
This proves the upper estimate in
\cref{eq:sublanding_orthogonal_distance_equivalence}. Conversely, every
\(Q\in\mathcal Q(U)\) satisfies
\(\|QU-U\|_{\mathrm{F}}\leq\|Q-I_n\|_{\mathrm{F}}\) because \(\|U\|_2=1\). Since
\(QU\in\mathcal X\), also
\(\dist(U,\mathcal X)\leq\|QU-U\|_{\mathrm{F}}\). Taking the minimum proves the lower
estimate.

\proofstep{Step 2: expansion of the next polar factor.}
Next, there are \(\eta_{\mathrm{pol}},b_{\mathrm{pol}}>0\) such that,
whenever \(X_k\in\St(n,p)^\varepsilon\) and
\(0<\eta_k\leq\eta_{\mathrm{pol}}\), the next polar factor is well defined and
\begin{equation}\label{eq:sublanding_polar_factor_recursion}
\overline X_{k+1}
=
\overline X_k-\eta_kW_k\overline X_k+H_k,
\quad
\|H_k\|_{\mathrm{F}}
\leq
b_{\mathrm{pol}}(\eta_ke_k+\eta_k^2).
\end{equation}
The polar decomposition of \(X_k\) and the normal-only update give
\[
\begin{gathered}
X_k=\overline X_k(I_p+E_k)^{1/2},\\
Y_k^{\mathrm{N}}=X_k+\lambda d_{\mathrm{N}}(X_k)
=X_k(I_p-\lambda E_k)=\overline X_kR_k^{\mathrm{N}},\\
R_k^{\mathrm{N}}=(I_p+E_k)^{1/2}(I_p-\lambda E_k).
\end{gathered}
\]
The two factors defining \(R_k^{\mathrm{N}}\) commute. Since
\(\lambda\varepsilon<1\), all eigenvalues
\(\sqrt{1+s}(1-\lambda s)\), \(|s|\leq\varepsilon\), are positive.
Thus \(R_k^{\mathrm{N}}\) is positive definite and
\begin{equation}\label{app_same_polar_factor}
\mathcal P(Y_k^{\mathrm{N}})=\overline X_k.
\end{equation}
The smallest singular value of \(Y_k^{\mathrm{N}}\) is uniformly bounded below by
\(m_\lambda=\min_{|s|\leq\varepsilon}\sqrt{1+s}(1-\lambda s)>0\).
Choose \(\eta_{\mathrm{pol}}>0\) so that
\(\eta_{\mathrm{pol}}B_{\mathrm{T}}\leq m_\lambda/2\). For every
\(\tau\in[0,1]\), Weyl's inequality gives
\[
\sigma_{\min}(Y_k^{\mathrm{N}}-\tau\eta_kW_kX_k)
\geq m_\lambda-\eta_k\|W_kX_k\|_2\geq m_\lambda/2.
\]
The radial segment
joining \(Y_k^{\mathrm{N}}=\overline X_kR_k^{\mathrm{N}}\) to \(\overline X_k\) is
\[
\overline X_k\bigl[(1-\tau)R_k^{\mathrm{N}}+\tau I_p\bigr],
\quad \tau\in[0,1].
\]
Every matrix on this segment has smallest singular value at least
\(\min\{m_\lambda,1\}>0\), and these segments are uniformly bounded as well.
Consequently, for some \(M_{\mathrm{pol}}<\infty\), both families of segments lie
in the compact set
\[
\mathcal K_{\mathrm{pol}}
=\left\{Y:\|Y\|_{\mathrm{F}}\leq M_{\mathrm{pol}},\ 
\sigma_{\min}(Y)\geq m_{\mathrm{pol}}\right\},
\quad
m_{\mathrm{pol}}=\min\{m_\lambda/2,1\}>0,
\]
which consists of full-column-rank matrices. The polar map is \(C^2\) on the
full-column-rank set, so its second derivative is uniformly bounded on
\(\mathcal K_{\mathrm{pol}}\), and its first derivative is uniformly Lipschitz
along the two families of segments.
At a Stiefel point \(U\), \(\mathrm D\mathcal P(U)[H]=H-U\sym(U^\top H)\). 
Since \(U^\top W_kU\) is skew-symmetric,
\begin{equation}\label{app_polar_tangent_derivative}
\mathrm D\mathcal P(U)[-W_kU]=-W_kU.
\end{equation}
Taylor expansion about \(Y_k^{\mathrm{N}}\), using
\cref{app_same_polar_factor}, gives \(
\overline X_{k+1}
=
\overline X_k
+\eta_k\mathrm D\mathcal P(Y_k^{\mathrm{N}})[-W_kX_k]
+R_k^{\mathrm{pol}},
\|R_k^{\mathrm{pol}}\|_{\mathrm{F}}\leq C_5\eta_k^2.
\) Let \(c_{\mathrm{pol}}=(1+\sqrt{1-\varepsilon})^{-1}\).
By~\cref{eq:sublanding_polar_distance},
\[
\|Y_k^{\mathrm{N}}-\overline X_k\|_{\mathrm{F}}
\leq\|X_k-\overline X_k\|_{\mathrm{F}}+\lambda\|X_kE_k\|_{\mathrm{F}}
\leq\bigl(c_{\mathrm{pol}}+\lambda\sqrt{1+\varepsilon}\bigr)e_k.
\]
Moreover, for some \(C_7>0\), \(
\|-W_kX_k+W_k\overline X_k\|_{\mathrm{F}}\leq C_7e_k
\). The Lipschitz continuity of \(\mathrm D\mathcal P\), together with
\cref{app_polar_tangent_derivative}, therefore yields
\(\|\mathrm D\mathcal P(Y_k^{\mathrm{N}})[-W_kX_k]+W_k\overline X_k\|_{\mathrm{F}}
\leq C_8e_k\) for some \(C_8>0\). Combining these estimates proves
\cref{eq:sublanding_polar_factor_recursion} with
\(b_{\mathrm{pol}}=\max\{C_5,C_8\}\).

\proofstep{Step 3: recursion for the orthogonal distance.}
It remains to establish the following recursion. Choose positive constants
\(\rho_{\mathrm{loc}}\), \(\eta_{\mathrm{loc}}\), \(L_\omega\), \(B_{\omega E}\), and
\(B_{\omega0}\), with
\(\rho_{\mathrm{loc}}<\min\{\rho_{\mathrm{sh}},\rho_{\mathrm{align}}\}\), such that
\begin{equation}\label{eq:sublanding_orthogonal_recursion}
\omega_{k+1}^2
\leq
(1+L_\omega\eta_k)\omega_k^2
-\mu_{\mathrm{sh}}\eta_k\omega_k
+B_{\omega E}\eta_ke_k
+B_{\omega0}\eta_k^2,
\quad
\mu_{\mathrm{sh}}=\frac{4\sigma_{\mathrm{sh}}}{c_{\mathrm{align}}},
\end{equation}
whenever \(e_k\leq\varepsilon\),
\(\omega_k\leq\rho_{\mathrm{loc}}\), and
\(0<\eta_k\leq\eta_{\mathrm{loc}}\).
Choose
\(0<\rho_{\mathrm{loc}}<\min\{\rho_{\mathrm{sh}},\rho_{\mathrm{align}}\}\), and choose a
minimizing orthogonal matrix \(Q_k\) in
\cref{eq:sublanding_orthogonal_distance}, and write \(\Delta_k=Q_k-I_n\) and
\(X_k^{\mathrm{ref}}=Q_k\overline X_k\in\mathcal X\).
Then \(\|\Delta_k\|_{\mathrm{F}}=\omega_k\) and
\(\Delta_k+\Delta_k^\top=-\Delta_k^{}\Delta_k^\top\). With
\(W_k=F_k^{}X_k^\top-X_k^{}F_k^\top\), a trace calculation gives
\begin{equation}\label{app_orthogonal_cross_identity}
\inner{\Delta_k}{Q_kW_k}
=
2\inner{Q_kX_k-X_k}{F_k}
+\inner{\Delta_k^{}\Delta_k^\top}{F_k^{}X_k^\top}.
\end{equation}
The points \(X_k\), \(Q_kX_k\), \(\overline X_k\), and \(X_k^{\mathrm{ref}}\) lie in the
safe region. By the definition of \(\omega_k\) and
\(\|\overline X_k\|_2=1\),
\(\dist(\overline X_k,\mathcal X)\leq\omega_k\leq\rho_{\mathrm{loc}}<\rho_{\mathrm{sh}}\),
so the weak-sharpness inequality applies. Since
\(X_k^{\mathrm{ref}}=Q_k\overline X_k\), use the decomposition
\[
f(Q_kX_k)-f(X_k)
=f(Q_kX_k)-f(Q_k\overline X_k)
+f(X_k^{\mathrm{ref}})-f(\overline X_k)
+f(\overline X_k)-f(X_k).
\]
The Lipschitz continuity of \(f\) and
\cref{eq:sublanding_polar_distance} give
\[
f(Q_kX_k)-f(X_k)
\leq
f(X_k^{\mathrm{ref}})-f(\overline X_k)
+2L_f^\varepsilon c_{\mathrm{pol}}e_k.
\]
Since \(X_k^{\mathrm{ref}}\in\mathcal X\), weak sharpness applied with
\(U=\overline X_k\) and \(Y=X_k^{\mathrm{ref}}\) yields
\[
f(X_k^{\mathrm{ref}})-f(\overline X_k)
\leq
-\sigma_{\mathrm{sh}}\dist(\overline X_k,\mathcal X).
\]
Finally,~\cref{eq:sublanding_orthogonal_distance_equivalence} gives
\[
\dist(\overline X_k,\mathcal X)
\geq\frac{\omega_k}{c_{\mathrm{align}}}.
\]
Consequently,
\[
f(Q_kX_k)-f(X_k)
\leq
-\frac{\sigma_{\mathrm{sh}}}{c_{\mathrm{align}}}\omega_k
+2L_f^\varepsilon c_{\mathrm{pol}}e_k.
\]
Apply the weak-convexity inequality to
\(\inner{Q_kX_k-X_k}{F_k}\) in
\cref{app_orthogonal_cross_identity}. Since \(
\|Q_kX_k-X_k\|_{\mathrm{F}}\leq\sqrt{1+\varepsilon}\,\omega_k
\) and
\(\left|\inner{\Delta_k^{}\Delta_k^\top}{F_k^{}X_k^\top}\right|
\leq L_f^\varepsilon\sqrt{1+\varepsilon}\,\omega_k^2\),
it follows that
\begin{equation}\label{app_sharp_cross_bound}
\inner{\Delta_k}{Q_kW_k}
\leq
-\frac{2\sigma_{\mathrm{sh}}}{c_{\mathrm{align}}}\omega_k
+a_{\mathrm{sh}}\omega_k^2+b_{\mathrm{sh}}e_k.
\end{equation}
Here one may take
\(a_{\mathrm{sh}}=L_h(1+\varepsilon)
+L_f^\varepsilon\sqrt{1+\varepsilon}\) and
\(b_{\mathrm{sh}}=4L_f^\varepsilon c_{\mathrm{pol}}\).
Set \(Q_k^{\mathrm{cmp}}=Q_k\exp(\eta_kW_k)\). 
Let \(H_k\) be the remainder in
\cref{eq:sublanding_polar_factor_recursion}. Since
\[
Q_k^{\mathrm{cmp}}\overline X_{k+1}-X_k^{\mathrm{ref}}
=Q_k\left(
[\exp(\eta_kW_k)(I_n-\eta_kW_k)-I_n]\overline X_k
+\exp(\eta_kW_k)H_k
\right),
\]
\cref{eq:sublanding_polar_factor_recursion,app_exponential_remainder} imply \(
\|Q_k^{\mathrm{cmp}}\overline X_{k+1}-X_k^{\mathrm{ref}}\|_{\mathrm{F}}
\leq
C_9(\eta_ke_k+\eta_k^2)
\) for some \(C_9>0\). Because
\(X_k^{\mathrm{ref}}\in\mathcal X\), the same quantity bounds
\(\dist(Q_k^{\mathrm{cmp}}\overline X_{k+1},\mathcal X)\). Set
\[
\eta_{\mathrm{loc}}
=\min\left\{\eta_{\mathrm{pol}},1,
\frac{\rho_{\mathrm{align}}}{2C_9(1+\varepsilon)}\right\}>0.
\]
Then \(e_k\leq\varepsilon\) and \(\eta_k\leq\eta_{\mathrm{loc}}\) give
\[
C_9(\eta_ke_k+\eta_k^2)
\leq C_9\eta_k(\varepsilon+1)
\leq\frac{\rho_{\mathrm{align}}}2<\rho_{\mathrm{align}}.
\]
By~\cref{eq:sublanding_orthogonal_distance_equivalence}, there is
\(Q_k^{\mathrm{cor}}\in\mathrm O(n)\) such that
\(Q_k^{\mathrm{cor}}Q_k^{\mathrm{cmp}}\overline X_{k+1}\in\mathcal X\) and
\(\|Q_k^{\mathrm{cor}}-I_n\|_{\mathrm{F}}
\leq c_{\mathrm{align}}C_9(\eta_ke_k+\eta_k^2)\).
Hence
\(Q_k^{\mathrm{cor}}Q_k^{\mathrm{cmp}}\in\mathcal Q(\overline X_{k+1})\).
Expanding \(Q_k^{\mathrm{cmp}}\) and using
\cref{app_exponential_remainder} gives
\[
\|Q_k^{\mathrm{cmp}}-I_n\|_{\mathrm{F}}^2
\leq
\omega_k^2
+2\eta_k\inner{\Delta_k}{Q_kW_k}
+C_{10}\eta_k^2.
\]
To account for the additional factor \(Q_k^{\mathrm{cor}}\), set
\(D_k^{\mathrm{cor}}=Q_k^{\mathrm{cor}}-I_n\). Then
\[
Q_k^{\mathrm{cor}}Q_k^{\mathrm{cmp}}-I_n
=(Q_k^{\mathrm{cmp}}-I_n)+D_k^{\mathrm{cor}}Q_k^{\mathrm{cmp}}.
\]
Writing \(A_k=Q_k^{\mathrm{cmp}}-I_n\) and using orthogonality of
\(Q_k^{\mathrm{cmp}}\), one has
\[
\left|\|A_k+D_k^{\mathrm{cor}}Q_k^{\mathrm{cmp}}\|_{\mathrm{F}}^2-\|A_k\|_{\mathrm{F}}^2\right|
\leq2\|A_k\|_{\mathrm{F}}\|D_k^{\mathrm{cor}}\|_{\mathrm{F}}+\|D_k^{\mathrm{cor}}\|_{\mathrm{F}}^2.
\]
The matrices \(A_k\) are uniformly bounded. The preceding estimate of
\(D_k^{\mathrm{cor}}\), together with the local stepsize and error bounds, therefore
bounds the right-hand side by \(C_{11}(\eta_ke_k+\eta_k^2)\). Combining these
estimates with~\cref{app_sharp_cross_bound} proves
\[
\omega_{k+1}^2
\leq
(1+2a_{\mathrm{sh}}\eta_k)\omega_k^2
-\frac{4\sigma_{\mathrm{sh}}}{c_{\mathrm{align}}}\eta_k\omega_k
+(2b_{\mathrm{sh}}+C_{11})\eta_ke_k
+(C_{10}+C_{11})\eta_k^2.
\]
This is~\cref{eq:sublanding_orthogonal_recursion} with
\(L_\omega=2a_{\mathrm{sh}}\),
\(B_{\omega E}=2b_{\mathrm{sh}}+C_{11}\), and
\(B_{\omega0}=C_{10}+C_{11}\).

\proofstep{Step 4: induction and convergence.}
Let \(
q_{\mathrm{N}}=q_\varepsilon(\lambda),
\delta_{\mathrm{N}}=1-q_{\mathrm{N}},
C_E=4B_{\mathrm{T}}^2/\delta_{\mathrm{N}}
\). By~\cref{prop:sublanding_exact_lambda_range}, \(q_{\mathrm{N}}<1\), so
\(\delta_{\mathrm{N}}>0\). Set
\(B_{\mathrm{loc}}=B_{\omega0}+B_{\omega E}C_E\) and
\(\delta_{\mathrm{lin}}=\min\{1/2,\mu_{\mathrm{sh}}^2/(8B_{\mathrm{loc}})\}>0\).
Choose \(\underline\vartheta\in(0,1)\) such that
\(\underline\vartheta^2\geq
\max\{(1+q_{\mathrm{N}})/2,1-\delta_{\mathrm{lin}}/2\}\).
Such a choice is possible because the quantity on the right is strictly
smaller than one. Fix \(\vartheta\in(\underline\vartheta,1)\), and choose
\(\overline\eta>0\) smaller than
\[
\min\left\{
\frac{\delta_{\mathrm{N}}}{4C_{\mathrm{mix}}\lambda},
\frac{\delta_{\mathrm{lin}}}{2L_\omega},
\sqrt{\frac{\varepsilon}{C_E}},
\frac{2(1-\vartheta^2)\rho_{\mathrm{loc}}}{\mu_{\mathrm{sh}}},
\eta_{\mathrm{loc}},
1
\right\}.
\]
Define \(a_{\mathrm{loc}}=1-\vartheta^2+L_\omega\overline\eta\) and
\(C_\omega=\mu_{\mathrm{sh}}/(2a_{\mathrm{loc}})\). The choices above give
\[
1-\vartheta^2<\frac{\delta_{\mathrm{lin}}}2,
\quad
L_\omega\overline\eta<\frac{\delta_{\mathrm{lin}}}2,
\]
and hence
\(0<a_{\mathrm{loc}}<\delta_{\mathrm{lin}}\leq1/2<\vartheta^2\). Moreover,
\(a_{\mathrm{loc}}\geq1-\vartheta^2\), so
\[
C_\omega\overline\eta
\leq\frac{\mu_{\mathrm{sh}}\overline\eta}
{2(1-\vartheta^2)}<\rho_{\mathrm{loc}}.
\]
Consider an initial point satisfying the conditions of the theorem. Since
\(\eta_0\leq\overline\eta\), the preceding choices give
\(\vartheta^2-q_{\mathrm{N}}>\delta_{\mathrm{N}}/2\) and
\(C_{\mathrm{mix}}\lambda\eta_0<\delta_{\mathrm{N}}/4\), and therefore
\(\vartheta^2-q_{\mathrm{N}}-C_{\mathrm{mix}}\lambda\eta_0>\delta_{\mathrm{N}}/4\).
They also give \(C_E\eta_0^2\leq\varepsilon\). The proof now uses
\cref{unified_residual_bound} directly. If
\(e_k\leq C_E\eta_k^2\), then \(\eta_k\leq\eta_0\) and
\[
e_{k+1}
\leq(q_{\mathrm{N}}+C_{\mathrm{mix}}\lambda\eta_0)C_E\eta_k^2
+B_{\mathrm{T}}^2\eta_k^2
=(q_{\mathrm{N}}+C_{\mathrm{mix}}\lambda\eta_0+\delta_{\mathrm{N}}/4)
C_E\eta_k^2
\leq C_E\vartheta^2\eta_k^2=C_E\eta_{k+1}^2.
\]
The initial condition and induction therefore yield
\begin{equation}\label{app:sublanding_local_normal_rate}
e_k\leq C_E\eta_k^2
=C_E\eta_0^2\vartheta^{2k},
\end{equation}
and \(X_k\in\St(n,p)^\varepsilon\) for every \(k\). It remains to prove the
orthogonal-alignment estimate. Whenever
\(\omega_k\leq C_\omega\eta_k\), the bound
\(C_\omega\overline\eta<\rho_{\mathrm{loc}}\) gives
\(\omega_k<\rho_{\mathrm{loc}}\). Thus
\cref{eq:sublanding_orthogonal_recursion} applies. Using
the bound in~\cref{app:sublanding_local_normal_rate} and \(\eta_k\leq1\) gives
\(B_{\omega E}\eta_ke_k\leq B_{\omega E}C_E\eta_k^3
\leq B_{\omega E}C_E\eta_k^2\).
It follows that
\begin{equation}\label{eq:sublanding_reduced_orthogonal_recursion}
\omega_{k+1}^2
\leq
(1+L_\omega\eta_k)\omega_k^2
-\mu_{\mathrm{sh}}\eta_k\omega_k+B_{\mathrm{loc}}\eta_k^2.
\end{equation}
For \(z\in[0,C_\omega]\), set
\(\phi_k(z)=(1+L_\omega\eta_k)z^2-\mu_{\mathrm{sh}}z+B_{\mathrm{loc}}\).
The bounds \(a_{\mathrm{loc}}\leq\delta_{\mathrm{lin}}\leq
\mu_{\mathrm{sh}}^2/(8B_{\mathrm{loc}})\) and
\(a_{\mathrm{loc}}\leq\vartheta^2\) imply
\[
\phi_k(0)=B_{\mathrm{loc}}
\leq\frac{\mu_{\mathrm{sh}}^2}{8a_{\mathrm{loc}}}
\leq\frac{\mu_{\mathrm{sh}}^2\vartheta^2}{4a_{\mathrm{loc}}^2}
=C_\omega^2\vartheta^2.
\]
At the other endpoint,
\(\phi_k(C_\omega)-C_\omega^2\vartheta^2
\leq a_{\mathrm{loc}}C_\omega^2-\mu_{\mathrm{sh}}C_\omega+B_{\mathrm{loc}}\).
Since \(C_\omega=\mu_{\mathrm{sh}}/(2a_{\mathrm{loc}})\), the right-hand side equals
\(B_{\mathrm{loc}}-\mu_{\mathrm{sh}}^2/(4a_{\mathrm{loc}})\leq0\).
Since \(\phi_k\) is convex, its value throughout
\([0,C_\omega]\) is bounded above by the larger endpoint value. The
initial condition gives \(\omega_0/\eta_0\leq C_\omega\). If
\(\omega_k/\eta_k\leq C_\omega\), then
\cref{eq:sublanding_reduced_orthogonal_recursion} yields
\(\omega_{k+1}^2\leq C_\omega^2\vartheta^2\eta_k^2
=C_\omega^2\eta_{k+1}^2\).
Induction therefore proves
\begin{equation}\label{app:sublanding_local_orthogonal_rate}
\omega_k\leq C_\omega\eta_k
=C_\omega\eta_0\vartheta^k.
\end{equation}
The same induction verifies the local-neighborhood condition required by
\cref{eq:sublanding_orthogonal_recursion} at every iteration. Using
\cref{eq:sublanding_polar_distance,app:sublanding_local_normal_rate,app:sublanding_local_orthogonal_rate},
it follows that \(\dist(\overline X_k,\mathcal X)\leq\omega_k\) and
\(\dist(X_k,\mathcal X)\leq c_{\mathrm{pol}}e_k+
\omega_k=\mathcal O(\vartheta^k)\).
Moreover,~\cref{unified_tangential_bounds} and
\cref{eq:local_sublanding_iteration} imply
\(\|X_{k+1}-X_k\|_{\mathrm{F}}\leq B_{\mathrm{T}}\eta_k
+\lambda\sqrt{1+\varepsilon}\,e_k\).
The right-hand side is summable, so \(\{X_k\}\) is a Cauchy sequence. Let
\(X_\infty\) denote its limit. Since
\(\dist(X_k,\mathcal X)\to0\) and \(\mathcal X\) is closed,
\(X_\infty\in\mathcal X\). Summing the preceding increment bound from
\(j=k\) to infinity gives \(\|X_k-X_\infty\|_{\mathrm{F}}=\mathcal O(\vartheta^k)\).

Finally,~\cref{eq:sublanding_weak_sharpness} makes every point of
\(\mathcal X\), including \(X_\infty\), a local minimizer of \(f\) on
\(\St(n,p)\). Since \(X_\infty\in\St(n,p)\),
\(E(X_\infty)=0\) and \(\St_{X_\infty^\top X_\infty^{}}=\St(n,p)\). As \(f\) is locally
Lipschitz, the nonsmooth Fermat rule
\citep[Chapter~10]{rockafellar1998variational} gives
\(0\in\partial_{\St_{X_\infty^\top X_\infty^{}}}^g f(X_\infty)\).
By~\cref{canonical_riemannian_subdifferential}, there is
\(\xi_\infty\in\partial r(X_\infty)\) such that, with
\(F_\infty=\nabla h(X_\infty)+\xi_\infty\),
\[
F_\infty-X_\infty^{} F_\infty^\top X_\infty^{}=0.
\]
Multiplying this identity by \(X_\infty^\top\) shows that
\(F_\infty^\top X_\infty^{}\) is symmetric. Hence
\(\Gamma_\infty=F_\infty^\top X_\infty^{}/2\) satisfies
\(F_\infty=2X_\infty\Gamma_\infty\), which is precisely
\cref{explicit_nonsmooth_KKT}.
\end{proof}
\subsection{Proof of \texorpdfstring{\cref{prop:prox_wellposedness}}{Proposition~\ref{prop:prox_wellposedness}}}
\label{app:prox_wellposedness_proof}

For this and the following proofs, write
\(\mathcal A_X:\R^{n\times p}\to\mathbb S^p\), \(\mathcal A_X(Z)=X^\top Z+Z^\top X\), so that
\(\mathrm T_X\St_{X^\top X}=\ker\mathcal A_X\).

\begin{proof}
The objective in~subproblem~\eqref{prox_subproblem} is coercive and
\(1/t\)-strongly convex on the closed linear space
\(\ker\mathcal A_X\), so it has a unique minimizer. The map
\(\mathcal A_X\) is surjective onto \(\mathbb S^p\): for
\(S\in\mathbb S^p\), the matrix
\(X(X^\top X)^{-1}S/2\) is mapped to \(S\). Convex KKT conditions
are therefore necessary and sufficient. Thus there is at least one pair
\((\Gamma_X,\xi_X)\), with \(\Gamma_X\in\mathbb S^p\) and
\(\xi_X\in\partial r(X+V(X))\), satisfying
\begin{equation}
\label{prox_kkt}
0=\nabla h(X)+\frac{1}{t}V(X)+\xi_X-2X\Gamma_X,
\quad
\mathcal A_X(V(X))=0.
\end{equation}
If
\(\Gamma=\Gamma_X\), the first relation in~\cref{prox_kkt} is the
proximal optimality condition associated with
\(V(X,\Gamma_X)\) in~\cref{prox_multiplier_direction}. Hence,
\(V(X,\Gamma_X)=V(X)\) and
\(R(X,\Gamma_X)=0\).
For arbitrary \(\Gamma\), the proximal optimality condition associated with
\cref{prox_multiplier_direction} is
\(0\in\nabla h(X)+V(X,\Gamma)/t
+\partial r(X+V(X,\Gamma))-2X\Gamma\). If
\(R(X,\Gamma)=0\), this is the KKT system for
 subproblem~\eqref{prox_subproblem}, and uniqueness gives
\(V(X,\Gamma)=V(X)\). The reverse implication follows from
\(\mathcal A_X(V(X))=0\), proving the stated equivalence.
It remains to characterize the zero direction. The KKT system gives
\begin{equation}
\label{prox_zero_equivalence}
V(X)=0
\quad\Longleftrightarrow\quad
\exists\,\xi\in\partial r(X),\ \Gamma\in\mathbb S^p:
\nabla h(X)+\xi-2X\Gamma=0.
\end{equation}
Indeed, the forward implication follows from~\cref{prox_kkt}. Conversely,
if the right-hand side holds, then \(V=0\) satisfies the KKT system, and
uniqueness of the minimizer gives \(V(X)=0\). Combining
\cref{prox_zero_equivalence} with feasibility proves the final equivalence.
\end{proof}

\subsection{Proof of \texorpdfstring{\cref{thm:prox_complexity}}{Theorem~\ref{thm:prox_complexity}}}
\label{app:proof-prox-complexity}

\begin{proof}
We first show that sufficiently small steps satisfy both line-search
conditions uniformly on the safe region. Throughout this proof, write
\(V_k=V(X_k)\). Set
\[
B_{\mathrm{P}}
=\left[t^2(L_f^\varepsilon)^2
+\lambda^2(1+\varepsilon)\varepsilon^2\right]^{1/2},
\quad
C_{\mathrm{P}}=\frac{L_h}{2}+\sigma_{\mathrm P},
\quad
b=2\lambda\sigma_{\mathrm P}(1-\varepsilon)
-\lambda L_f^\varepsilon\sqrt{1+\varepsilon},
\]
and define
\[
\eta_*
=\min\left\{
\frac{1}{2\lambda(1+\varepsilon)},
\frac{2\lambda\varepsilon(1-\varepsilon)}{B_{\mathrm{P}}^2},
1,
\frac{1-\rho}{tC_{\mathrm{P}}},
\frac{b-\rho}{C_{\mathrm{P}}\lambda^2(1+\varepsilon)\varepsilon}
\right\}.
\]
By~\cref{prox_sigma_condition},
\(b-\rho=(\vartheta_{\mathrm{P}}-1)
\bigl(\rho+\lambda L_f^\varepsilon\sqrt{1+\varepsilon}\bigr)>0\).
Hence every
entry in the minimum defining \(\eta_*\) is positive, so \(\eta_*>0\).
Let \(m_X(U)=\inner{\nabla h(X)}{U}+\|U\|_{\mathrm{F}}^2/(2t)+r(X+U)\).
Restricted to the linear space \(\ker\mathcal A_X\), this function is
\(1/t\)-strongly convex. Since \(V(X)\) minimizes \(m_X\) on this space
and \(0\in\ker\mathcal A_X\),
\(m_X(0)\geq m_X(V(X))+\|V(X)\|_{\mathrm{F}}^2/(2t)\).
For \(\xi\in\partial r(X)\), convexity of \(r\) then yields
\(\|V(X)\|_{\mathrm{F}}^2/t
\leq-\inner{\nabla h(X)+\xi}{V(X)}
\leq L_f^\varepsilon\|V(X)\|_{\mathrm{F}}\),
which proves
\begin{equation}
\label{prox_displacement_bound}
\|V(X)\|_{\mathrm{F}}\leq tL_f^\varepsilon.
\end{equation}
Since
\(V(X)\in\ker\mathcal A_X\),
\(\inner{V(X)}{d_{\mathrm{N}}(X)}=-\inner{V(X)}{XE(X)}=0\).
Together with~\cref{prox_displacement_bound} and the safe-region bounds,
orthogonality gives
\[
\|\Lambda(X)\|_{\mathrm{F}}^2
=\|V(X)\|_{\mathrm{F}}^2
+\lambda^2\|d_{\mathrm{N}}(X)\|_{\mathrm{F}}^2
\leq t^2(L_f^\varepsilon)^2
+\lambda^2(1+\varepsilon)\varepsilon^2
=B_{\mathrm{P}}^2.
\]
Write \(V=V(X)\), \(E=E(X)\), \(e=\|E\|_{\mathrm{F}}\), and
\(\Lambda=\Lambda(X)\). Direct
expansion gives \(E(X+\eta\Lambda)=E-2\eta\lambda(E+E^2)
+\eta^2\Lambda^\top\Lambda\).
When \(0<\eta\leq[2\lambda(1+\varepsilon)]^{-1}\), spectral calculus
and \(\|E\|_2\leq\varepsilon\) imply
\begin{equation}
\label{app:prox_linear_feasibility}
\|E(X+\eta\Lambda)\|_{\mathrm{F}}
\leq[1-2\eta\lambda(1-\varepsilon)]e
+\eta^2\|\Lambda\|_{\mathrm{F}}^2.
\end{equation}
The coefficient of \(e\) is nonnegative. Thus, the first two bounds in the
definition of \(\eta_*\) give
\(\|E(X+\eta\Lambda)\|_{\mathrm{F}}
\leq[1-2\eta\lambda(1-\varepsilon)]\varepsilon+
\eta^2B_{\mathrm{P}}^2\leq\varepsilon\). 
For \(0\leq\eta\leq1\), convexity of \(r\) gives
\(r(X+\eta V)-r(X)\leq\eta[r(X+V)-r(X)]\). Combining this inequality with
the preceding optimality comparison gives
\begin{equation}
\label{app:prox_model_decrease}
\eta\inner{\nabla h(X)}{V}
+r(X+\eta V)-r(X)
\leq-\frac{\eta}{t}\|V\|_{\mathrm{F}}^2.
\end{equation}
The descent lemma for \(h\), the global \(L_r\)-Lipschitz continuity of
\(r\), and~\cref{app:prox_model_decrease} yield the first estimate below.
Adding \(\sigma_{\mathrm P}\) times~\cref{app:prox_linear_feasibility} gives the second,
\[
\begin{aligned}
f(X+\eta\Lambda)-f(X)
&\leq-\frac{\eta}{t}\|V\|_{\mathrm{F}}^2
+\eta\lambda L_f^\varepsilon\sqrt{1+\varepsilon}\,e
+\frac{L_h}{2}\eta^2\|\Lambda\|_{\mathrm{F}}^2,\\
\Phi_{\sigma_{\mathrm P}}(X+\eta\Lambda)-\Phi_{\sigma_{\mathrm P}}(X)
&\leq-\frac{\eta}{t}\|V\|_{\mathrm{F}}^2-\eta b e
+C_{\mathrm{P}}\eta^2\|\Lambda\|_{\mathrm{F}}^2.
\end{aligned}
\]
Furthermore,
\(\|\Lambda\|_{\mathrm{F}}^2\leq\|V\|_{\mathrm{F}}^2
+\lambda^2(1+\varepsilon)\varepsilon e\).
The last two bounds in the definition of \(\eta_*\) now give
\begin{equation}
\label{prox_uniform_decrease}
X^+=X+\eta\Lambda(X)\in\St(n,p)^\varepsilon,
\quad
\Phi_{\sigma_{\mathrm P}}(X^+)
\leq\Phi_{\sigma_{\mathrm P}}(X)-\eta
\rho\left(\frac{\|V(X)\|_{\mathrm{F}}^2}{t}
+\|E(X)\|_{\mathrm{F}}\right).
\end{equation}
Every trial not exceeding \(\eta_*\) therefore passes both line-search
tests. Geometric backtracking terminates, and its first accepted step satisfies
\begin{equation}
\label{prox_accepted_lower_bound}
\eta_k\geq
\underline\eta=
\beta_{\mathrm{ls}}\min\{\eta_0,\eta_*\}>0.
\end{equation}

\Needspace{5\baselineskip}
\proofstep{Continuity.}
For \(X\in\R_*^{n\times p}\), set \(G=X^\top X\). Given
\(W\in\R^{n\times p}\), let \(S_X(W)\in\mathbb S^p\) be the unique
solution of \(GS_X(W)+S_X(W)G=X^\top W+W^\top X\), and set
\(\operatorname{Proj}_{X,\mathrm E}(W)=W-XS_X(W)\).
This is the Euclidean projection onto \(\ker\mathcal A_X\). On the
safe region, \(G\succeq(1-\varepsilon)I_p\), so the projection is
continuous in \((X,W)\).
Let \(X_j\to X\) in the safe region and set \(V_j=V(X_j)\). The bound in
\cref{prox_displacement_bound} implies that \(\{V_j\}\) is bounded. Consider
an arbitrary convergent subsequence and, without relabeling, write
\(V_j\to\overline V\).
Since \(\mathcal A_{X_j}(V_j)=0\) and
\((X,V)\mapsto\mathcal A_X(V)\) is continuous,
\(\mathcal A_X(\overline V)=0\), so
\(\overline V\in\ker\mathcal A_X\).
For arbitrary \(W\in\ker\mathcal A_X\), let
\(W_j=\operatorname{Proj}_{X_j,\mathrm E}(W)\). Then \(W_j\to W\).
Exact optimality gives
\[
\inner{\nabla h(X_j)}{V_j}
+\frac{1}{2t}\|V_j\|_{\mathrm{F}}^2+r(X_j+V_j)
\leq
\inner{\nabla h(X_j)}{W_j}
+\frac{1}{2t}\|W_j\|_{\mathrm{F}}^2+r(X_j+W_j).
\]
Passing to the limit shows that \(\overline V\) solves
 subproblem~\eqref{prox_subproblem} at \(X\). Uniqueness gives
\(\overline V=V(X)\). Every convergent subsequence has this limit, so
\(V(X_j)\to V(X)\).

To prove the joint continuity of \(V(X,\Gamma)\), let
\((X_j,\Gamma_j)\to(X,\Gamma)\) and set
\(Z_j=X_j-t\nabla h(X_j)+2tX_j\Gamma_j\). Then
\(Z_j\to Z=X-t\nabla h(X)+2tX\Gamma\). Nonexpansiveness of the fixed
proximal mapping \(\prox_{t r}\) gives
\(\prox_{t r}(Z_j)\to\prox_{t r}(Z)\), and hence
\(V(X_j,\Gamma_j)\to V(X,\Gamma)\).

Since \(X_0\in\St(n,p)^\varepsilon\), induction using
\cref{prox_uniform_decrease,prox_accepted_lower_bound} shows that every line
search terminates after finitely many backtracking steps,
\(X_k\in\St(n,p)^\varepsilon\) for all \(k\), and
\(\eta_k\geq\underline\eta\). Acceptance and telescoping give
\begin{equation}
\label{prox_global_sum}
\sum_{k=0}^{K-1}\eta_k\rho
\left(\frac{\|V_k\|_{\mathrm{F}}^2}{t}+\|E(X_k)\|_{\mathrm{F}}\right)
\leq\Delta_{\mathrm{P},0},
\end{equation}
because
\(\Phi_{\sigma_{\mathrm P}}(X_K)\geq f_*\). The lower bound
\cref{prox_accepted_lower_bound}, the minimum--average inequality, and
nonnegativity give~\cref{prox_separate_complexity_bounds}. Since
\(\eta_k\geq\underline\eta>0\), \cref{prox_global_sum} also implies
\[
\sum_{k=0}^\infty
\left(\frac{\|V_k\|_{\mathrm{F}}^2}{t}+\|E(X_k)\|_{\mathrm{F}}\right)<\infty.
\]
Therefore \(\|V_k\|_{\mathrm{F}}\to0\) and \(\|E(X_k)\|_{\mathrm{F}}\to0\). The safe region
is compact. If \(X_{k_j}\to\overline X\), the continuity established above gives
\(E(\overline X)=0\) and \(V(\overline X)=0\). By
\cref{prop:prox_wellposedness}, \(\overline X\) is first-order stationary.
\end{proof}

\subsection{Proof of \texorpdfstring{\cref{prop:iprox_residual_exactness}}{Proposition~\ref{prop:iprox_residual_exactness}}}
\label{app:iprox_residual_exactness_proof}

\begin{proof}
Suppose first that \(V(X,\Gamma)=0\) and \(E(X)=0\) for some
\(\Gamma\in\mathbb S^p\). The optimality condition defining the proximal
mapping gives \(\xi\in\partial r(X)\) such that
\(\nabla h(X)+\xi-2X\Gamma=0\), which is the first-order stationarity
system. Conversely, if \(X\) is first-order stationary, there are
\(\Gamma\in\mathbb S^p\) and \(\xi\in\partial r(X)\) satisfying this
system. Then \(V=0\) satisfies the optimality condition defining
\(V(X,\Gamma)\). Strong convexity of the proximal subproblem gives
\(V(X,\Gamma)=0\), completing the proof.
\end{proof}

\subsection{Tangency Corrections for IPGLanding}
\label{app:iprox_tangency_correction}

Fix \(X\in\St(n,p)^\varepsilon\) and \(\Gamma\in\mathbb S^p\), and abbreviate
\(V=V(X,\Gamma)\) and \(R=R(X,\Gamma)\). For \(R\ne0\), \(V\) need not be
tangent. Its Euclidean projection \(D_{\mathrm{E}}=V-XS_{\mathrm{E}}\) requires
\(S_{\mathrm{E}}\in\mathbb S^p\) solving the Sylvester equation
\[
(X^\top X)S_{\mathrm{E}}+S_{\mathrm{E}}(X^\top X)=R.
\]
If only tangency, rather than Euclidean orthogonality of the projection, is
required, the oblique projection
\[
D_{\mathrm{obl}}=V-\frac12X(X^\top X)^{-1}R
\]
maps \(V\) exactly into \(\mathrm T_X\St_{X^\top X}\) because
\(\mathcal A_X(X(X^\top X)^{-1}R/2)=R\). It avoids the Sylvester equation
but still requires a linear solve involving \(X^\top X\). 
IPGLanding instead replaces \((X^\top X)^{-1}\) by \(I_p\) and uses
\(D=V-XR/2=V-X\sym(X^\top V)\). When \(X^\top X=I_p\), all three
corrections coincide with the Euclidean tangent projection. At infeasible
points, the explicit correction controls the remaining  violation of the tangency condition
rather than enforcing exact tangency. Writing \(E=E(X)\), we have
\[
\mathcal A_X(D)
=R-\frac12\bigl((I_p+E)R+R(I_p+E)\bigr)
=-\frac12(ER+RE).
\]
Consequently, \(D\) is tangent whenever \(E=0\) or \(R=0\), and
\[
\|\mathcal A_X(D)\|_{\mathrm{F}}\leq\|E\|_{\mathrm{F}}\|R\|_{\mathrm{F}}.
\]

\subsection{Proof of \texorpdfstring{\cref{thm:iprox_complexity}}{Theorem~\ref{thm:iprox_complexity}}}
\label{app:iprox_complexity_proof}

\begin{proof}
For each completed outer iteration, abbreviate
\(V_k=V(X_k,\Gamma_k)=V_{k,J_k}\).
\proofstep{Reference-value invariant.}
Whenever an outer iteration \(k\) is completed, the acceptance test gives
\(\Phi_{\sigma_{\mathrm{IP}}}(X_{k+1})\leq\overline\Phi_k\). Since
\(a_{k+1}^{\mathrm{nm}}=\omega_{\mathrm{mem}}a_k^{\mathrm{nm}}+1\),
\cref{iprox_nonmonotone_reference} is a convex combination of
\(\overline\Phi_k\) and \(\Phi_{\sigma_{\mathrm{IP}}}(X_{k+1})\). Consequently,
\[
\Phi_{\sigma_{\mathrm{IP}}}(X_{k+1})
\leq\overline\Phi_{k+1}\leq\overline\Phi_k.
\]
The initialization \(\overline\Phi_0=\Phi_{\sigma_{\mathrm{IP}}}(X_0)\) thus implies
\(\Phi_{\sigma_{\mathrm{IP}}}(X_k)\leq\overline\Phi_k\) for every generated outer
iterate.

\proofstep{Multiplier iteration.}
Set
\[
c_{\mathrm{d}}=\frac{\theta_{\mathrm{d}}(2-\theta_{\mathrm{d}})}
{16t^2(1+\varepsilon)^2}>0.
\]
Fix \(X\in\St(n,p)^\varepsilon\), set
\(T_X(\Gamma)=\Gamma-\tau_{\mathrm{d}}R(X,\Gamma)\), and let
\(V^\star=V(X)\). By
\cref{prop:prox_wellposedness}, there are
\(\Gamma_X^\star\in\mathbb S^p\) and
\(\xi^\star\in\partial r(X+V^\star)\) such that
\(0=\nabla h(X)+V^\star/t+\xi^\star-2X\Gamma_X^\star\) and
\(\mathcal A_X(V^\star)=0\).
The first equality is also the optimality condition defining
\(V(X,\Gamma_X^\star)\). Strong convexity therefore gives
\(V(X,\Gamma_X^\star)=V^\star\), and hence
\(R(X,\Gamma_X^\star)=0\). The global \(L_r\)-Lipschitz continuity of \(r\)
gives \(\|\xi^\star\|_{\mathrm{F}}\leq L_r\). Together with the definition of
\(L_f^\varepsilon\), the argument of~\cref{prox_displacement_bound} gives
\(\|V(X)\|_{\mathrm{F}}\leq tL_f^\varepsilon\), and hence
\(2\|X\Gamma_X^\star\|_{\mathrm{F}}\leq\|\nabla h(X)\|_{\mathrm{F}}+L_r
+\|V^\star\|_{\mathrm{F}}/t\leq2L_f^\varepsilon\). Since
\(X^\top X\succeq(1-\varepsilon)I_p\), it also follows that
\(\sqrt{1-\varepsilon}\,\|\Gamma_X^\star\|_{\mathrm{F}}
\leq\|X\Gamma_X^\star\|_{\mathrm{F}}\leq L_f^\varepsilon\).
Thus \(\|\Gamma_X^\star\|_{\mathrm{F}}\leq R_{\mathrm{a}}\). For
\(\Gamma_1,\Gamma_2\in\mathbb S^p\), write
\(\Delta\Gamma=\Gamma_1-\Gamma_2\),
\(\Delta V=V(X,\Gamma_1)-V(X,\Gamma_2)\), and
\(\Delta R=R(X,\Gamma_1)-R(X,\Gamma_2)\). Firm nonexpansiveness of the
proximal mapping~\citep{bauschke2017convex} and symmetry of
\(\Delta\Gamma\) give
\[
\begin{aligned}
t\inner{\Delta R}{\Delta\Gamma}
&=t\inner{2\sym(X^\top\Delta V)}{\Delta\Gamma}
=\inner{\Delta V}{2tX\Delta\Gamma}\geq\|\Delta V\|_{\mathrm{F}}^2,\\
\inner{\Delta R}{\Delta\Gamma}
&\geq \frac{\|\Delta V\|_{\mathrm{F}}^2}{t},
\quad \|\Delta R\|_{\mathrm{F}}\leq2\|X\|_2\|\Delta V\|_{\mathrm{F}}.
\end{aligned}
\]
Consequently, \(\inner{\Delta R}{\Delta\Gamma}
\geq\|\Delta R\|_{\mathrm{F}}^2/(4t\|X\|_2^2)\). Expanding the squared distance
and using \(\|X\|_2^2\leq1+\varepsilon\) together with
\cref{iprox_safe_dual_step} gives
\[
\begin{aligned}
\|T_X(\Gamma_1)-T_X(\Gamma_2)\|_{\mathrm{F}}^2
&=\|\Delta\Gamma-\tau_{\mathrm{d}}\Delta R\|_{\mathrm{F}}^2\\
&\leq\|\Delta\Gamma\|_{\mathrm{F}}^2
-\tau_{\mathrm{d}}\left([2t\|X\|_2^2]^{-1}-\tau_{\mathrm{d}}\right)
\|\Delta R\|_{\mathrm{F}}^2\\
&\leq\|\Gamma_1-\Gamma_2\|_{\mathrm{F}}^2-c_{\mathrm{d}}
\|R(X,\Gamma_1)-R(X,\Gamma_2)\|_{\mathrm{F}}^2.
\end{aligned}
\]
Here the last inequality follows from
\(\tau_{\mathrm{d}}([2t(1+\varepsilon)]^{-1}-\tau_{\mathrm{d}})
=c_{\mathrm{d}}\). Therefore, \(T_X\) is
nonexpansive. Since \(R(X,\Gamma_X^\star)=0\), the point
\(\Gamma_X^\star\) is a fixed point of \(T_X\). Moreover, \(\Gamma_0,\Gamma_X^\star\in\mathbb B_{\mathrm{a}}\).
Since \(\Gamma_{j+1}=T_X(\Gamma_j)\) and \(T_X\) is nonexpansive,
\(\|\Gamma_j-\Gamma_X^\star\|_{\mathrm{F}}
\leq\|\Gamma_0-\Gamma_X^\star\|_{\mathrm{F}}\leq2R_{\mathrm{a}}\)
for every \(j\geq0\), and hence \(\|\Gamma_j\|_{\mathrm{F}}\leq3R_{\mathrm{a}}\).
The stronger estimate above also yields
\[
\|\Gamma_{j+1}-\Gamma_X^\star\|_{\mathrm{F}}^2
\leq\|\Gamma_j-\Gamma_X^\star\|_{\mathrm{F}}^2
-c_{\mathrm{d}}\|R(X,\Gamma_j)\|_{\mathrm{F}}^2.
\]
Summing this distance-decrease inequality gives
\[
c_{\mathrm{d}}\sum_{j=0}^{\infty}\|R(X,\Gamma_j)\|_{\mathrm{F}}^2
\leq\|\Gamma_0-\Gamma_X^\star\|_{\mathrm{F}}^2\leq4R_{\mathrm{a}}^2,
\]
and hence \(R(X,\Gamma_j)\to0\).
\proofstep{Uniform primal estimates.}
Set
\[
\begin{alignedat}{2}
B_V&=t\bigl(L_f^\varepsilon
+6\sqrt{1+\varepsilon}\,R_{\mathrm{a}}\bigr),\quad&
B_\Lambda&=(2+\varepsilon)B_V
+\lambda\sqrt{1+\varepsilon}\,\varepsilon,\\
c_\Lambda&=\frac{2}{\rho}\max\left\{
t(2+\varepsilon)^2,\lambda^2(1+\varepsilon)\varepsilon\right\},\quad&
C_{\mathrm{IP}}&=\left(\frac{L_h}{2}+\sigma_{\mathrm{IP}}\right)c_\Lambda,
\end{alignedat}
\]
and define
\[
\eta_*^{\mathrm{IP}}
=\min\left\{1,
\frac{1}{2\lambda(1+\varepsilon)},
\frac{\lambda(1-\varepsilon)\varepsilon}{B_\Lambda^2},
\frac{1-c}{C_{\mathrm{IP}}}\right\}.
\]
Set
\(\underline\eta_{\mathrm{IP}}
=\beta_{\mathrm{ls}}\min\{\eta_0,\eta_*^{\mathrm{IP}}\}>0\).
All these constants are positive.
The argument proceeds by induction over the accepted outer iterations. Initially,
\(X_0\in\St(n,p)^\varepsilon\) and
\(\Gamma_{0,0}\in\mathbb B_{\mathrm{a}}\). Every accepted primal trial remains in
the safe region by the acceptance test, while the projected warm start
returns the next initial multiplier to \(\mathbb B_{\mathrm{a}}\). The
multiplier-iteration argument above therefore gives
\(\|\Gamma_{k,j}\|_{\mathrm{F}}\leq3R_{\mathrm{a}}\) for all inner iterates.
Fix indices \((k,j)\) of an inner iterate, set
\((X,\Gamma)=(X_k,\Gamma_{k,j})\), and abbreviate
\(E=E(X)\), \(e=\|E\|_{\mathrm{F}}\), \(V=V_{k,j}\), \(R=R_{k,j}\),
\(D=D(X,\Gamma)\), \(\Lambda=\Lambda_{k,j}\), and \(M=M_{k,j}\). Choose
\(\xi\in\partial r(X)\). Since
\(X=\prox_{t r}(X+t\xi)\), nonexpansiveness of the
proximal mapping and~\cref{safe_region_objective_bounds} give
\(\|V\|_{\mathrm{F}}\leq t(L_f^\varepsilon
+6\sqrt{1+\varepsilon}\,R_{\mathrm{a}})\leq B_V\). Moreover,
\(\|D\|_{\mathrm{F}}\leq(2+\varepsilon)\|V\|_{\mathrm{F}}\) and
\(\|XE\|_{\mathrm{F}}\leq\sqrt{1+\varepsilon}\,e\).
Thus \(\|\Lambda\|_{\mathrm{F}}^2\leq2(2+\varepsilon)^2\|V\|_{\mathrm{F}}^2
+2\lambda^2(1+\varepsilon)e^2\).
Using \(e^2\leq\varepsilon e\) and the triangle inequality gives
\(\|\Lambda\|_{\mathrm{F}}\leq B_\Lambda\) and
\begin{equation}
\label{iprox_direction_bounds}
\|\Lambda(X,\Gamma)\|_{\mathrm{F}}^2\leq c_\Lambda M(X,\Gamma).
\end{equation}
When
\(0<\eta\leq[2\lambda(1+\varepsilon)]^{-1}\), using
\(\mathcal A_X(D)=-(ER+RE)/2\) to expand \(E(X+\eta\Lambda)\) and diagonalizing
\(E\) yield
\[
\begin{aligned}
E(X+\eta\Lambda)
&=E-2\eta\lambda(E+E^2)-\eta(ER+RE)/2+\eta^2\Lambda^\top\Lambda,\\
\|E-2\eta\lambda(E+E^2)\|_{\mathrm{F}}
&\leq[1-2\eta\lambda(1-\varepsilon)]e.
\end{aligned}
\]
Moreover, \(\|ER+RE\|_{\mathrm{F}}/2\leq e\|R\|_{\mathrm{F}}\) and
\(\|\Lambda^\top\Lambda\|_{\mathrm{F}}\leq\|\Lambda\|_{\mathrm{F}}^2\).
Combining these estimates gives
\begin{equation}
\label{iprox_feasibility_upper_model}
\|E(X+\eta\Lambda(X,\Gamma))\|_{\mathrm{F}}
\leq \|E(X)\|_{\mathrm{F}}
+\eta\|E(X)\|_{\mathrm{F}}
[-2\lambda(1-\varepsilon)+\|R(X,\Gamma)\|_{\mathrm{F}}]
+\eta^2\|\Lambda(X,\Gamma)\|_{\mathrm{F}}^2.
\end{equation}
For \(0<\eta\leq1\), the descent lemma for \(h\) and convexity of
\(r\) give
\[
f(X+\eta\Lambda)-f(X)
\leq\eta\inner{\nabla h(X)}{\Lambda}
+\eta[r(X+\Lambda)-r(X)]
+\frac{L_h}{2}\eta^2\|\Lambda\|_{\mathrm{F}}^2.
\]
Adding \(\sigma_{\mathrm{IP}}\) times
\cref{iprox_feasibility_upper_model} to this objective bound and then using
\cref{iprox_direction_bounds} gives
\(\Phi_{\sigma_{\mathrm{IP}}}(X+\eta\Lambda)-\Phi_{\sigma_{\mathrm{IP}}}(X)
\leq\eta\ell(X,\Gamma)+C_{\mathrm{IP}}\eta^2M\).
The sufficient-descent condition gives \(\ell(X,\Gamma)\leq-M\). If
\(\eta\leq(1-c)/C_{\mathrm{IP}}\), then
\(\Phi_{\sigma_{\mathrm{IP}}}(X+\eta\Lambda)\leq\Phi_{\sigma_{\mathrm{IP}}}(X)-c\eta M
\leq\overline\Phi_k-c\eta M\), which is the merit test in
\cref{iprox_acceptance_test}. Under the tangency-residual condition
\(\|R\|_{\mathrm{F}}\leq\lambda(1-\varepsilon)\) in
\cref{iprox_primal_conditions},~\cref{iprox_feasibility_upper_model} reduces
to the following bound. Since
\(0\leq1-\eta\lambda(1-\varepsilon)\) and
\(\eta B_\Lambda^2\leq\lambda(1-\varepsilon)\varepsilon\),
\[
\|E(X+\eta\Lambda)\|_{\mathrm{F}}
\leq[1-\eta\lambda(1-\varepsilon)]e+\eta^2B_\Lambda^2
\leq[1-\eta\lambda(1-\varepsilon)]\varepsilon
+\eta\lambda(1-\varepsilon)\varepsilon
=\varepsilon.
\]
Thus every trial with
\(0<\eta\leq\eta_*^{\mathrm{IP}}\) is accepted, and geometric backtracking
returns \(\eta_k\geq\underline\eta_{\mathrm{IP}}\). If the initial trial with
\(\eta=\eta_0\) is accepted before
\cref{iprox_primal_conditions} is checked, then
\(\eta_k=\eta_0\geq\underline\eta_{\mathrm{IP}}\) directly from the definition
of \(\underline\eta_{\mathrm{IP}}\).

\proofstep{Sufficient descent.}
We next show that reducing the tangency residual eventually ensures
the conditions in~\cref{iprox_primal_conditions} at any nonstationary
iterate. Let
\[
a_{\mathrm{IP}}=\min\left\{1-\rho,
(\vartheta_{\mathrm{IP}}-1)
\bigl(\rho+\lambda L_f^\varepsilon\sqrt{1+\varepsilon}\bigr)\right\}>0,
\quad
b_{\mathrm{I}}=3R_{\mathrm{a}}
+\frac12L_f^\varepsilon\sqrt{1+\varepsilon}
+\sigma_{\mathrm{IP}}\varepsilon.
\]
Fix indices \((k,j)\) of an arbitrary inner iterate, set
\((X,\Gamma)=(X_k,\Gamma_{k,j})\), and abbreviate
\(E=E(X)\), \(e=\|E\|_{\mathrm{F}}\), \(V=V_{k,j}\), \(R=R_{k,j}\),
\(D=D(X,\Gamma)\), \(\Lambda=\Lambda_{k,j}\), \(M=M_{k,j}\), and
\(\ell=\ell_{k,j}\). By~\cref{iprox_penalty_condition},
\(a_{\mathrm{IP}}>0\), and the definition of \(b_{\mathrm{I}}\) gives
\(b_{\mathrm{I}}>0\).
The optimality condition of the proximal mapping gives a subgradient
\(\xi\in\partial r(X+V)\). It satisfies
\(0=\nabla h(X)-2X\Gamma+V/t+\xi\).
Convexity of \(r\) yields
\(\inner{\nabla h(X)-2X\Gamma}{V}+r(X+V)-r(X)
\leq-\|V\|_{\mathrm{F}}^2/t\).
Since \(D-V=-XR/2\), the gradient bound at \(X\) in the safe region and
the global Lipschitz continuity of \(r\) give
\(\inner{\nabla h(X)}{D-V}+r(X+D)-r(X+V)
\leq L_f^\varepsilon\sqrt{1+\varepsilon}\|R\|_{\mathrm{F}}/2\).
Also,
\(2\inner{X\Gamma}{V}=\inner{\Gamma}{R}\). Therefore,
\[
\inner{\nabla h(X)}{D}+r(X+D)-r(X)
\leq-\frac{1}{t}\|V\|_{\mathrm{F}}^2
+\left(3R_{\mathrm{a}}
+\frac12L_f^\varepsilon\sqrt{1+\varepsilon}\right)\|R\|_{\mathrm{F}}.
\]
Since \(\Lambda-D=\lambda d_{\mathrm{N}}(X)=-\lambda XE\), the gradient bound and the global
\(L_r\)-Lipschitz continuity of \(r\) give
\[
\inner{\nabla h(X)}{\Lambda-D}
+r(X+\Lambda)-r(X+D)
\leq\lambda\bigl(\|\nabla h(X)\|_{\mathrm{F}}+L_r\bigr)\|XE\|_{\mathrm{F}}
\leq\lambda L_f^\varepsilon\sqrt{1+\varepsilon}\,e.
\]
Combining the two direction estimates with the definition of \(\ell\),
and using \(e\leq\varepsilon\) in its \(e\|R\|_{\mathrm{F}}\) term, gives
\[
\ell(X,\Gamma)
\leq-\frac{1}{t}\|V\|_{\mathrm{F}}^2
-[2\sigma_{\mathrm{IP}}\lambda(1-\varepsilon)
-\lambda L_f^\varepsilon\sqrt{1+\varepsilon}]e
+b_{\mathrm{I}}\|R\|_{\mathrm{F}}.
\]
Rearranging this bound for \(\ell\) gives the following lower bound on
\(-M-\ell\):
\[
-M-\ell\geq(1-\rho)\frac{\|V\|_{\mathrm{F}}^2}{t}
+[2\sigma_{\mathrm{IP}}\lambda(1-\varepsilon)-\lambda L_f^\varepsilon\sqrt{1+\varepsilon}-\rho]e
-b_{\mathrm{I}}\|R\|_{\mathrm{F}}.
\]
By~\cref{iprox_penalty_condition}, the coefficient of \(e\) equals
\((\vartheta_{\mathrm{IP}}-1)
(\rho+\lambda L_f^\varepsilon\sqrt{1+\varepsilon})\). Hence the definition
of \(a_{\mathrm{IP}}\) proves
\begin{equation}
\label{iprox_transfer_lower_bound}
-M(X,\Gamma)-\ell(X,\Gamma)
\geq\frac{a_{\mathrm{IP}}}{\rho}M(X,\Gamma)
-b_{\mathrm{I}}\|R(X,\Gamma)\|_{\mathrm{F}}.
\end{equation}

\proofstep{Infinite inner loops.}
Suppose that no outer step is accepted at \(X_k\), even after
infinitely many multiplier updates. The multiplier estimates
above give \(R_{k,j}\to0\) and show that
\(\{\Gamma_{k,j}\}\) is bounded. Every accumulation point
\(\overline\Gamma\) of this sequence therefore satisfies
\(R(X_k,\overline\Gamma)=0\). Continuity of the proximal
direction and \cref{prop:prox_wellposedness} then give
\(V_{k,j}\to V(X_k)\).

If either \(E(X_k)\ne0\) or \(V(X_k)\ne0\), then
\(M_{k,j}\) is bounded away from zero for all sufficiently
large \(j\). Together with \(R_{k,j}\to0\),
\cref{iprox_transfer_lower_bound} then implies both
conditions in~\cref{iprox_primal_conditions}.
The uniform primal estimates above would make the line
search accept a primal step, contradicting the assumption.

Hence \(E(X_k)=0\) and \(V(X_k)=0\), so
\cref{prop:prox_wellposedness} gives first-order
stationarity. Moreover, \(V_{k,j}\to0\).
Consequently, for any prescribed tolerances
\(\epsilon_{\mathrm{stat}},\epsilon_{\mathrm{feas}}>0\),
either an outer step is accepted after finitely many
multiplier updates, or
\[
\frac{\|V(X_k,\Gamma_{k,j})\|_{\mathrm F}}{t}
\leq\epsilon_{\mathrm{stat}},
\qquad
\|X_k^\top X_k-I_p\|_{\mathrm F}
\leq\epsilon_{\mathrm{feas}}
\]
hold after finitely many multiplier updates. Thus, these conditions provide a finite stopping
criterion for practical implementations.

\proofstep{Global decrease and complexity.}
The acceptance test and~\cref{iprox_nonmonotone_reference} give
\(\overline\Phi_{k+1}\leq\overline\Phi_k
-c\eta_kM_k/a_{k+1}^{\mathrm{nm}}\). Since
\(a_{k+1}^{\mathrm{nm}}\leq(1-\omega_{\mathrm{mem}})^{-1}\) and
\(\eta_k\geq\underline\eta_{\mathrm{IP}}\),
\[
\overline\Phi_{k+1}
\leq
\overline\Phi_k-c(1-\omega_{\mathrm{mem}})\underline\eta_{\mathrm{IP}}M_k.
\]
Summing this inequality and using the lower bound
\(\overline\Phi_K\geq
\min_{X\in\St(n,p)^\varepsilon}\Phi_{\sigma_{\mathrm{IP}}}(X)\) gives
\begin{equation}
\label{iprox_global_decrease_sum}
\sum_{k=0}^{K-1}M_k\leq\Delta_{\mathrm{IP}}.
\end{equation}
Thus, \(M_k\to0\) whenever infinitely many outer iterations are completed.
The definition of \(M_k\) and the minimum--average inequality give
\[
\min_{0\leq k<K}
\left\{
\frac{\rho}{t}\|V_k\|_{\mathrm{F}}^2
+\rho\|E(X_k)\|_{\mathrm{F}}
\right\}
\leq
\frac{\Delta_{\mathrm{IP}}}{K}.
\]
Nonnegativity of its two terms gives
\cref{iprox_primal_best_iterate_rates}.

Before the feasibility tolerance is first attained, every completed outer
iteration starts from an \(X_k\) satisfying
\(\|E(X_k)\|_{\mathrm{F}}>\epsilon\), and hence
\(M_k\geq\rho\epsilon\). Therefore~\cref{iprox_global_decrease_sum} permits at
most \(\mathcal O(\epsilon^{-1})\) completed outer iterations before this
tolerance is attained. Likewise, before the stationarity tolerance is
attained,
\(\|V_k\|_{\mathrm{F}}/t>\epsilon\), so
\(M_k>\rho t\epsilon^2\), which permits at most
\(\mathcal O(\epsilon^{-2})\) completed outer iterations.

Finally, suppose that infinitely many outer iterates are generated. The
summability established above gives \(M_k\to0\), hence
\(E(X_k)\to0\) and
\(\|V_k\|_{\mathrm{F}}^2\leq tM_k/\rho\to0\).
If \(X_{k_i}\to\overline X\), boundedness of the accepted multipliers permits a
further subsequence on which
\(\Gamma_{k_i}\to\overline\Gamma\). The joint continuity established in
\cref{app:proof-prox-complexity} gives
\(E(\overline X)=0\) and \(V(\overline X,\overline\Gamma)=0\).
Therefore~\cref{prop:iprox_residual_exactness} shows that \(\overline X\) is
first-order stationary.
\end{proof}

\subsection{Proof of \texorpdfstring{\cref{thm:iprox_kl_convergence}}{Theorem~\ref{thm:iprox_kl_convergence}}}
\label{app:iprox_kl_convergence_proof}

\begin{proof}
Throughout this proof, write \(V_k=V(X_k,\Gamma_k)=V_{k,J_k}\).
Write \(\phi_k=\Phi_{\sigma_{\mathrm{IP}}}(X_k)\),
\(a_k=a_k^{\mathrm{nm}}\), and \(\omega=\omega_{\mathrm{mem}}\). From
\cref{iprox_nonmonotone_reference},
\begin{equation}
\label{iprox_zh_identities}
\delta_k^{\mathrm{nm}}
=\frac{\overline\Phi_k-\phi_{k+1}}{a_{k+1}},
\quad
\overline\Phi_k-\phi_k
=(a_k-1)\delta_{k-1}^{\mathrm{nm}},
\quad k\geq1.
\end{equation}
Indeed, the first identity follows from
\(a_{k+1}-1=\omega a_k\), and the second follows by applying the first one
at iteration \(k-1\). The recurrence for \(a_k\) also gives
\(1\leq a_k\leq(1-\omega)^{-1}\). Consequently, the acceptance test and
the uniform stepsize bound in~\cref{thm:iprox_complexity} yield
\begin{equation}
\label{iprox_delta_controls_M}
\delta_k^{\mathrm{nm}}
\geq c(1-\omega)\underline\eta_{\mathrm{IP}}M_k
=\gamma_{\mathrm{nm}}M_k,
\quad
\gamma_{\mathrm{nm}}=c(1-\omega)\underline\eta_{\mathrm{IP}}>0.
\end{equation}
Thus \(\{\overline\Phi_k\}\) is nonincreasing and, by
\cref{iprox_global_decrease_sum}, bounded below. Let
\(\overline\Phi_k\to\overline\Phi_\infty\). Then
\(\delta_k^{\mathrm{nm}}\to0\), \(M_k\to0\), and
\cref{iprox_zh_identities} gives \(\phi_k\to\overline\Phi_\infty\).
We average consecutive reference values so that the decrease involves
the same two terms as the Bregman-error bound.
For \(k\geq1\), define
\[
\Psi_k=\frac{\overline\Phi_{k-1}+\overline\Phi_k}{2},
\quad
s_k=\sqrt{\delta_{k-1}^{\mathrm{nm}}+\delta_k^{\mathrm{nm}}}.
\]
Then \(\Psi_k\downarrow\overline\Phi_\infty\) and
\begin{equation}
\label{iprox_averaged_energy_decrease}
\Psi_k-\Psi_{k+1}=\frac{s_k^2}{2}.
\end{equation}
Set
\[
\mathcal J_k=h(X_k)+r(Z_k)+\inner{\xi_k}{X_k-Z_k}
-\inner{\Gamma_k}{E_k}.
\]
By~\cref{iprox_bregman_error},
\begin{equation}
\label{iprox_lift_base_identity}
\mathcal J_k=f(X_k)-B_k-\inner{\Gamma_k}{E_k}.
\end{equation}
Let
\[
g_k^{\mathrm{lift}}=\Psi_k-\mathcal J_k,
\quad
u_k=\operatorname{sgn}(g_k^{\mathrm{lift}})
\sqrt{\lvert g_k^{\mathrm{lift}}\rvert},
\quad
\mathcal W_k=(X_k,Z_k,\Gamma_k,\xi_k,u_k),
\]
where \(\operatorname{sgn}(0)=0\). Since
\(u_k\lvert u_k\rvert=g_k^{\mathrm{lift}}\),
\begin{equation}
\label{iprox_lift_energy_identity}
\mathcal H_{\mathrm{IP}}(\mathcal W_k)=\Psi_k.
\end{equation}
Moreover,~\cref{iprox_zh_identities,iprox_lift_base_identity} give the exact
relation
\begin{equation}
\label{iprox_lift_gap_identity}
g_k^{\mathrm{lift}}=\left(a_k-\frac12\right)\delta_{k-1}^{\mathrm{nm}}
+\sigma_{\mathrm{IP}}\|E_k\|_{\mathrm{F}}+B_k
+\inner{\Gamma_k}{E_k}.
\end{equation}
Since \(\|E_k\|_{\mathrm{F}}\leq M_k/\rho
\leq\delta_k^{\mathrm{nm}}/(\rho\gamma_{\mathrm{nm}})\)
and \(\|\Gamma_k\|_{\mathrm{F}}\leq3R_{\mathrm{a}}\),
\cref{ass:iprox_bregman,iprox_lift_gap_identity} imply that, for all
sufficiently large \(k\),
\begin{equation}
\label{iprox_lift_gap_bound}
\lvert g_k^{\mathrm{lift}}\rvert\leq C_G s_k^2,
\quad
\lvert u_k\rvert\leq\sqrt{C_G}\,s_k,
\end{equation}
for some constant \(C_G>0\). 
The scalar function \(u\mapsto u\lvert u\rvert\) is continuously differentiable
on \(\R\), with derivative \(2\lvert u\rvert\), including at \(u=0\).
In the variable order \((X,Z,\Gamma,\xi,u)\), direct subdifferential
calculus gives
\[
\partial\mathcal H_{\mathrm{IP}}(X,Z,\Gamma,\xi,u)
=\left\{
\bigl(
\nabla h(X)+\xi-2X\Gamma,\,
\zeta-\xi,\,
-E(X),\,
X-Z,\,
2\lvert u\rvert
\bigr)
:\zeta\in\partial r(Z)
\right\}.
\]
Using~\cref{iprox_selected_subgradient} and choosing \(\zeta=\xi_k\) gives
\begin{equation}
\label{iprox_lift_subgradient}
\mathcal G_k=
\bigl(
-V_k/t,\,
0,\,
-E_k,\,
-V_k,\,
2\lvert u_k\rvert
\bigr)
\in\partial\mathcal H_{\mathrm{IP}}(\mathcal W_k).
\end{equation}
By~\cref{iprox_delta_controls_M}, \(\|V_k\|_{\mathrm{F}}^2
\leq tM_k/\rho
\leq t\delta_k^{\mathrm{nm}}/(\rho\gamma_{\mathrm{nm}})\).
The safe-region bound \(\|E_k\|_{\mathrm{F}}\leq\varepsilon\) also gives
\(\|E_k\|_{\mathrm{F}}^2
\leq\varepsilon\|E_k\|_{\mathrm{F}}
\leq\varepsilon\delta_k^{\mathrm{nm}}/(\rho\gamma_{\mathrm{nm}})\).
Together with~\cref{iprox_lift_gap_bound,iprox_lift_subgradient}, these
estimates yield
\begin{equation}
\label{iprox_lift_relative_error}
\dist\bigl(0,\partial\mathcal H_{\mathrm{IP}}(\mathcal W_k)\bigr)
\leq\|\mathcal G_k\|
\leq C_R s_k,
\end{equation}
for some constant \(C_R>0\) and all sufficiently large \(k\), where
\(\|\mathcal G_k\|\) is the Euclidean product norm.

The sequence \(\{\mathcal W_k\}\) is bounded. Indeed, \(X_k\) lies in the
compact safe region, \(V_k\to0\), and hence
\(Z_k=X_k+V_k\) is bounded. The
multipliers are bounded by the argument
in~\cref{app:iprox_complexity_proof}, while
\(\|\xi_k\|_{\mathrm{F}}\leq L_r\) follows from
\cref{nonsmooth_subgradient_bound}. Moreover, \cref{iprox_lift_gap_bound} bounds
\(u_k\). Let \(\Omega_{\mathrm{KL}}\) be the nonempty compact set of accumulation
points of \(\{\mathcal W_k\}\). Then
\(\dist(\mathcal W_k,\Omega_{\mathrm{KL}})\to0\). The lifted function is
continuous, and~\cref{iprox_lift_energy_identity} gives
\(\mathcal H_{\mathrm{IP}}(\mathcal W_k)=\Psi_k\to\overline\Phi_\infty\). Therefore,
\cref{iprox_lift_relative_error} and closedness of the limiting
subdifferential show that
\[
\mathcal H_{\mathrm{IP}}(W)=\overline\Phi_\infty,
\quad
0\in\partial\mathcal H_{\mathrm{IP}}(W)
\quad\text{for every }W\in\Omega_{\mathrm{KL}}.
\]
The uniformized KL inequality
\citep[Lemma~6, p.~478]{bolte2014proximal}, applied on \(\Omega_{\mathrm{KL}}\),
provides a concave desingularizing
function \(\varphi\) such that, for all sufficiently large \(k\) with
\(H_k=\Psi_k-\overline\Phi_\infty>0\),
\begin{equation}
\label{iprox_uniform_kl}
\varphi'(H_k)\dist\bigl(0,\partial\mathcal H_{\mathrm{IP}}(\mathcal W_k)\bigr)
\geq1.
\end{equation}
If \(H_k=0\) at some index,~\cref{iprox_averaged_energy_decrease} implies
\(s_k=0\) thereafter. Hence~\cref{iprox_delta_controls_M} gives
\(M_k=0\), and~\cref{iprox_direction_bounds} gives
\(X_{k+1}=X_k\) for all subsequent indices. Otherwise,
concavity of \(\varphi\),~\cref{iprox_averaged_energy_decrease}, and
\cref{iprox_lift_relative_error,iprox_uniform_kl} give
\[
\varphi(H_k)-\varphi(H_{k+1})
\geq\varphi'(H_k)(H_k-H_{k+1})
=\frac12\varphi'(H_k)s_k^2
\geq\frac{s_k}{2C_R}.
\]
Summation yields \(\sum_{k=1}^{\infty}s_k<\infty\). Since
\(M_k\leq\delta_k^{\mathrm{nm}}/\gamma_{\mathrm{nm}}\leq
s_k^2/\gamma_{\mathrm{nm}}\),
\begin{equation}
\label{iprox_sqrt_M_summability}
\sum_{k=0}^{\infty}\sqrt{M_k}<\infty.
\end{equation}
Finally,~\cref{iprox_direction_bounds} and \(\eta_k\leq\eta_0\) imply
\[
\|X_{k+1}-X_k\|_{\mathrm{F}}
=\eta_k\|\Lambda_{k,J_k}\|_{\mathrm{F}}
\leq\eta_0\sqrt{c_\Lambda M_k}.
\]
Thus~\cref{iprox_sqrt_M_summability} proves the finite-length assertion in
\cref{iprox_kl_finite_length}, and \(\{X_k\}\) converges to some \(X_\infty\).
Choose a subsequence \(\{k_i\}\) such that
\(\Gamma_{k_i}\to\Gamma_\infty\). Since the primal sequence
converges and \(M_k\to0\), continuity
(\cref{app:proof-prox-complexity}) gives \(E(X_\infty)=0\) and
\(V(X_\infty,\Gamma_\infty)=0\).
Thus \cref{prop:iprox_residual_exactness} yields the KKT conclusion.
\end{proof}


\begin{thebibliography}{39}
\providecommand{\natexlab}[1]{#1}
\providecommand{\url}[1]{\texttt{#1}}
\expandafter\ifx\csname urlstyle\endcsname\relax
  \providecommand{\doi}[1]{doi: #1}\else
  \providecommand{\doi}{doi: \begingroup \urlstyle{rm}\Url}\fi

\bibitem[Ablin and Peyr{\'e}(2022)]{ablin2022fast}
Pierre Ablin and Gabriel Peyr{\'e}.
\newblock Fast and accurate optimization on the orthogonal manifold without
  retraction.
\newblock In Gustau Camps-Valls, Francisco J.~R. Ruiz, and Isabel Valera,
  editors, \emph{Proceedings of the Twenty-Fifth International Conference on
  Artificial Intelligence and Statistics}, volume 151 of \emph{Proceedings of
  Machine Learning Research}, pages 5636--5657. PMLR, 2022.
\newblock URL \url{https://proceedings.mlr.press/v151/ablin22a.html}.

\bibitem[Ablin et~al.(2024)Ablin, Vary, Gao, and Absil]{ablin2024infeasible}
Pierre Ablin, Simon Vary, Bin Gao, and Pierre-Antoine Absil.
\newblock Infeasible deterministic, stochastic, and variance-reduction
  algorithms for optimization under orthogonality constraints.
\newblock \emph{Journal of Machine Learning Research}, 25\penalty0
  (389):\penalty0 1--38, 2024.
\newblock URL \url{https://www.jmlr.org/papers/v25/23-0451.html}.

\bibitem[Alon et~al.(1999)Alon, Barkai, Notterman, Gish, Ybarra, Mack, and
  Levine]{alon1999colon}
Uri Alon, Naama Barkai, Daniel~A. Notterman, Kurt Gish, Suzanne Ybarra,
  D.~Mack, and Arnold~J. Levine.
\newblock Broad patterns of gene expression revealed by clustering analysis of
  tumor and normal colon tissues probed by oligonucleotide arrays.
\newblock \emph{Proceedings of the National Academy of Sciences}, 96\penalty0
  (12):\penalty0 6745--6750, 1999.
\newblock \doi{10.1073/pnas.96.12.6745}.

\bibitem[Attouch et~al.(2013)Attouch, Bolte, and
  Svaiter]{attouch2013convergence}
Hedy Attouch, J{\'e}r{\^o}me Bolte, and Benar~Fux Svaiter.
\newblock Convergence of descent methods for semi-algebraic and tame problems:
  Proximal algorithms, forward--backward splitting, and regularized
  {Gauss--Seidel} methods.
\newblock \emph{Mathematical Programming}, 137\penalty0 (1--2):\penalty0
  91--129, 2013.
\newblock \doi{10.1007/s10107-011-0484-9}.

\bibitem[Barzilai and Borwein(1988)]{barzilai1988two}
Jonathan Barzilai and Jonathan~M. Borwein.
\newblock Two-point step size gradient methods.
\newblock \emph{IMA Journal of Numerical Analysis}, 8\penalty0 (1):\penalty0
  141--148, 1988.
\newblock \doi{10.1093/imanum/8.1.141}.

\bibitem[Bauschke and Combettes(2017)]{bauschke2017convex}
Heinz~H. Bauschke and Patrick~L. Combettes.
\newblock \emph{Convex Analysis and Monotone Operator Theory in Hilbert
  Spaces}.
\newblock {CMS} Books in Mathematics. Springer, Cham, 2nd edition, 2017.
\newblock \doi{10.1007/978-3-319-48311-5}.

\bibitem[Bolte et~al.(2014)Bolte, Sabach, and Teboulle]{bolte2014proximal}
J{\'e}r{\^o}me Bolte, Shoham Sabach, and Marc Teboulle.
\newblock Proximal alternating linearized minimization for nonconvex and
  nonsmooth problems.
\newblock \emph{Mathematical Programming}, 146\penalty0 (1--2):\penalty0
  459--494, 2014.
\newblock \doi{10.1007/s10107-013-0701-9}.

\bibitem[Burke and Ferris(1993)]{burke1993weak}
James~V. Burke and Michael~C. Ferris.
\newblock Weak sharp minima in mathematical programming.
\newblock \emph{SIAM Journal on Control and Optimization}, 31\penalty0 (5):\penalty0
  1340--1359, 1993.
\newblock \doi{10.1137/0331063}.

\bibitem[Chen et~al.(2020)Chen, Ma, So, and Zhang]{chen2020proximal}
Shixiang Chen, Shiqian Ma, Anthony Man-Cho So, and Tong Zhang.
\newblock Proximal gradient method for nonsmooth optimization over the
  {Stiefel} manifold.
\newblock \emph{SIAM Journal on Optimization}, 30\penalty0 (1):\penalty0
  210--239, 2020.
\newblock \doi{10.1137/18M122457X}.

\bibitem[Chen et~al.(2016)Chen, Ji, and You]{chen2016augmented}
Weiqiang Chen, Hui Ji, and Yanfei You.
\newblock An augmented {Lagrangian} method for {$\ell_1$}-regularized
  optimization problems with orthogonality constraints.
\newblock \emph{SIAM Journal on Scientific Computing}, 38\penalty0
  (4):\penalty0 B570--B592, 2016.
\newblock \doi{10.1137/140988875}.

\bibitem[Deng and Peng(2023)]{deng2023manifold}
Kangkang Deng and Zheng Peng.
\newblock A manifold inexact augmented {Lagrangian} method for nonsmooth
  optimization on {Riemannian} submanifolds in {Euclidean} space.
\newblock \emph{IMA Journal of Numerical Analysis}, 43\penalty0 (3):\penalty0
  1653--1684, 2023.
\newblock \doi{10.1093/imanum/drac018}.

\bibitem[Deng et~al.(2025)Deng, Hu, Wu, and Wen]{deng2025oracle}
Kangkang Deng, Jiang Hu, Jiayuan Wu, and Zaiwen Wen.
\newblock Oracle complexities of augmented {Lagrangian} methods for nonsmooth
  composite optimization on a compact submanifold.
\newblock \emph{Mathematics of Operations Research}, 2025.
\newblock \doi{10.1287/moor.2024.0498}.

\bibitem[Fadili et~al.(2023)Fadili, Nghia, and Tran]{fadili2023sharp}
Jalal Fadili, Tran T.~A. Nghia, and Trinh T.~T. Tran.
\newblock Sharp, strong and unique minimizers for low complexity robust
  recovery.
\newblock \emph{Information and Inference: A Journal of the IMA}, 12\penalty0
  (3):\penalty0 1461--1513, 2023.
\newblock \doi{10.1093/imaiai/iaad005}.

\bibitem[Gao et~al.(2019)Gao, Liu, and Yuan]{gao2019parallelizable}
Bin Gao, Xin Liu, and Ya-xiang Yuan.
\newblock Parallelizable algorithms for optimization problems with orthogonality constraints.
\newblock \emph{SIAM Journal on Scientific Computing}, 41(3):A1949--A1983, 2019.
\newblock \doi{10.1137/18M1221679}.

\bibitem[Gao et~al.(2022)Gao, Vary, Ablin, and Absil]{gao2022optimization}
Bin Gao, Simon Vary, Pierre Ablin, and Pierre-Antoine Absil.
\newblock Optimization flows landing on the {Stiefel} manifold.
\newblock \emph{IFAC-PapersOnLine}, 55\penalty0 (30):\penalty0 25--30, 2022.
\newblock \doi{10.1016/j.ifacol.2022.11.023}.
\newblock 25th IFAC Symposium on Mathematical Theory of Networks and Systems.

\bibitem[Goyens et~al.(2024)Goyens, Eftekhari, and Boumal]{goyens2024layered}
Florentin Goyens, Armin Eftekhari, and Nicolas Boumal.
\newblock Computing second-order points under equality constraints: revisiting Fletcher's augmented Lagrangian.
\newblock \emph{Journal of Optimization Theory and Applications}, 201(3):1198--1228, 2024.
\newblock \doi{10.1007/s10957-024-02421-6}.

\bibitem[Hu et~al.(2024)Hu, Xiao, Liu, and Toh]{hu2024constraint}
Xiaoyin Hu, Nachuan Xiao, Xin Liu, and Kim-Chuan Toh.
\newblock A constraint dissolving approach for nonsmooth optimization over the
  {Stiefel} manifold.
\newblock \emph{IMA Journal of Numerical Analysis}, 44\penalty0 (6):\penalty0
  3717--3748, 2024.
\newblock \doi{10.1093/imanum/drad098}.

\bibitem[Huang and Wei(2022)]{huang2022riemannian}
Wen Huang and Ke~Wei.
\newblock {Riemannian} proximal gradient methods.
\newblock \emph{Mathematical Programming}, 194\penalty0 (1--2):\penalty0
  371--413, 2022.
\newblock \doi{10.1007/s10107-021-01632-3}.

\bibitem[Huang and Wei(2023)]{huang2023inexact}
Wen Huang and Ke~Wei.
\newblock An inexact {Riemannian} proximal gradient method.
\newblock \emph{Computational Optimization and Applications}, 85\penalty0
  (1):\penalty0 1--32, 2023.
\newblock \doi{10.1007/s10589-023-00451-w}.

\bibitem[Jolliffe et~al.(2003)Jolliffe, Trendafilov, and
  Uddin]{jolliffe2003modified}
Ian~T. Jolliffe, Nickolay~T. Trendafilov, and Mudassir Uddin.
\newblock A modified principal component technique based on the {LASSO}.
\newblock \emph{Journal of Computational and Graphical Statistics}, 12\penalty0
  (3):\penalty0 531--547, 2003.
\newblock \doi{10.1198/1061860032148}.

\bibitem[Karkhaneei and Mahdavi-Amiri(2019)]{karkhaneei2019nonconvex}
Mohammad~Mahdi Karkhaneei and Nezam Mahdavi-Amiri.
\newblock Nonconvex weak sharp minima on {Riemannian} manifolds.
\newblock \emph{Journal of Optimization Theory and Applications}, 183\penalty0 (1):\penalty0
  85--104, 2019.
\newblock \doi{10.1007/s10957-019-01539-2}.

\bibitem[Kovnatsky et~al.(2016)Kovnatsky, Glashoff, and
  Bronstein]{kovnatsky2016madmm}
Artiom Kovnatsky, Klaus Glashoff, and Michael~M. Bronstein.
\newblock {MADMM}: A generic algorithm for non-smooth optimization on manifolds.
\newblock In Bastian Leibe, Jiri Matas, Nicu Sebe, and Max Welling, editors,
  \emph{Computer Vision -- ECCV 2016}, volume 9909 of \emph{Lecture Notes in
  Computer Science}, pages 680--696, Cham, 2016. Springer.
\newblock \doi{10.1007/978-3-319-46454-1_41}.

\bibitem[Lai and Osher(2014)]{lai2014splitting}
Rongjie Lai and Stanley Osher.
\newblock A splitting method for orthogonality constrained problems.
\newblock \emph{Journal of Scientific Computing}, 58\penalty0 (2):\penalty0
  431--449, 2014.
\newblock \doi{10.1007/s10915-013-9740-x}.

\bibitem[Lerman and Maunu(2018)]{lerman2018overview}
Gilad Lerman and Tyler Maunu.
\newblock An overview of robust subspace recovery.
\newblock \emph{Proceedings of the IEEE}, 106\penalty0 (8):\penalty0
  1380--1410, 2018.
\newblock \doi{10.1109/JPROC.2018.2853141}.

\bibitem[Li et~al.(2011)Li, Mordukhovich, Wang, and Yao]{li2011weak}
Chong Li, Boris~S. Mordukhovich, Jinhua Wang, and Jen-Chih Yao.
\newblock Weak sharp minima on {Riemannian} manifolds.
\newblock \emph{SIAM Journal on Optimization}, 21\penalty0 (4):\penalty0 1523--1560, 2011.
\newblock \doi{10.1137/09075367X}.

\bibitem[Li et~al.(2025)Li, Ma, and Srivastava]{li2025riemannian}
Jiaxiang Li, Shiqian Ma, and Tejes Srivastava.
\newblock A {Riemannian} alternating direction method of multipliers.
\newblock \emph{Mathematics of Operations Research}, 50\penalty0 (4):\penalty0
  3222--3242, 2025.
\newblock \doi{10.1287/moor.2023.0068}.

\bibitem[Li et~al.(2021)Li, Chen, Deng, Qu, Zhu, and So]{li2021weakly}
Xiao Li, Shixiang Chen, Zengde Deng, Qing Qu, Zhihui Zhu, and Anthony Man-Cho
  So.
\newblock Weakly convex optimization over {Stiefel} manifold using
  {Riemannian} subgradient-type methods.
\newblock \emph{SIAM Journal on Optimization}, 31\penalty0 (3):\penalty0
  1605--1634, 2021.
\newblock \doi{10.1137/20M1321000}.

\bibitem[Liu et~al.(2024)Liu, Xiao, and Yuan]{liu2024penaltyfree}
Xin Liu, Nachuan Xiao, and Ya-xiang Yuan.
\newblock A penalty-free infeasible approach for a class of nonsmooth
  optimization problems over the {Stiefel} manifold.
\newblock \emph{Journal of Scientific Computing}, 99\penalty0 (2):\penalty0 30,
  2024.
\newblock \doi{10.1007/s10915-024-02495-4}.

\bibitem[Ozoli{\c{n}}{\v{s}} et~al.(2013)Ozoli{\c{n}}{\v{s}}, Lai, Caflisch,
  and Osher]{ozolicnvs2013compressed}
Vidvuds Ozoli{\c{n}}{\v{s}}, Rongjie Lai, Russel Caflisch, and Stanley Osher.
\newblock Compressed modes for variational problems in mathematics and physics.
\newblock \emph{Proceedings of the National Academy of Sciences}, 110\penalty0
  (46):\penalty0 18368--18373, 2013.
\newblock \doi{10.1073/pnas.1318679110}.

\bibitem[Qi and Sun(1993)]{qi1993nonsmooth}
Liqun Qi and Jie Sun.
\newblock A nonsmooth version of {Newton}'s method.
\newblock \emph{Mathematical Programming}, 58:\penalty0 353--367, 1993.
\newblock \doi{10.1007/BF01581275}.

\bibitem[Rockafellar and Wets(1998)]{rockafellar1998variational}
Ralph~Tyrrell Rockafellar and Roger J.-B. Wets.
\newblock \emph{Variational Analysis}, volume 317 of \emph{Grundlehren der
  Mathematischen Wissenschaften}.
\newblock Springer, Berlin, 1998.
\newblock \doi{10.1007/978-3-642-02431-3}.

\bibitem[Schechtman et~al.(2023)Schechtman, Tiapkin, Muehlebach, and Moulines]{schechtman2023orthogonal}
Sholom Schechtman, Daniil Tiapkin, Michael Muehlebach, and {\'E}ric Moulines.
\newblock Orthogonal directions constrained gradient method: from non-linear
  equality constraints to {Stiefel} manifold.
\newblock In Gergely Neu and Lorenzo Rosasco, editors,
  \emph{Proceedings of Thirty Sixth Conference on Learning Theory},
  volume 195 of \emph{Proceedings of Machine Learning Research},
  pages 1228--1258. PMLR, 2023.
\newblock URL \url{https://proceedings.mlr.press/v195/schechtman23a.html}.

\bibitem[Thiao et~al.(2010)Thiao, Pham~Dinh, and Le~Thi]{thiao2010dc}
Mamadou Thiao, Tao Pham~Dinh, and Hoai~An Le~Thi.
\newblock A {DC} programming approach for sparse eigenvalue problem.
\newblock In \emph{Proceedings of the 27th International Conference on Machine
  Learning}, pages 1063--1070, Haifa, Israel, 2010. Omnipress.
\newblock URL \url{https://icml.cc/Conferences/2010/papers/359.pdf}.

\bibitem[Xiao et~al.(2021)Xiao, Liu, and Yuan]{xiao2021exact}
Nachuan Xiao, Xin Liu, and Ya-xiang Yuan.
\newblock Exact penalty function for {$\ell_{2,1}$}-norm minimization over the
  {Stiefel} manifold.
\newblock \emph{SIAM Journal on Optimization}, 31\penalty0 (4):\penalty0
  3097--3126, 2021.
\newblock \doi{10.1137/20M1354313}.

\bibitem[Xiao et~al.(2018)Xiao, Li, Wen, and Zhang]{xiao2018regularized}
Xiantao Xiao, Yongfeng Li, Zaiwen Wen, and Liwei Zhang.
\newblock A regularized semi-smooth {Newton} method with projection steps for
  composite convex programs.
\newblock \emph{Journal of Scientific Computing}, 76\penalty0 (1):\penalty0
  364--389, 2018.
\newblock \doi{10.1007/s10915-017-0624-3}.

\bibitem[Xie and Li(2026)]{xie2026fsipl}
Xiyuan Xie and Qia Li.
\newblock An infeasible method with feasibility safeguard for nonsmooth
  composite optimization over manifolds, 2026.
\newblock arXiv:2605.24954v2.

\bibitem[Xu et~al.(2025)Xu, Jiang, Liu, and So]{xu2025oracle}
Meng Xu, Bo~Jiang, Ya-Feng Liu, and Anthony Man-Cho So.
\newblock On the oracle complexity of a {Riemannian} inexact augmented
  {Lagrangian} method for nonsmooth composite problems over {Riemannian}
  submanifolds.
\newblock \emph{Optimization Letters}, 2025.
\newblock \doi{10.1007/s11590-025-02210-8}.

\bibitem[Yang et~al.(2011)Yang, Shen, Ma, Huang, and Zhou]{yang2011}
Yi~Yang, Heng~Tao Shen, Zhigang Ma, Zi~Huang, and Xiaofang Zhou.
\newblock $\ell_{2,1}$-norm regularized discriminative feature selection for
  unsupervised learning.
\newblock In \emph{Proceedings of the Twenty-Second International Joint
  Conference on Artificial Intelligence}, pages 1589--1594, 2011.
\newblock \doi{10.5591/978-1-57735-516-8/IJCAI11-267}.

\bibitem[Zhang and Hager(2004)]{zhang2004nonmonotone}
Hongchao Zhang and William~W. Hager.
\newblock A nonmonotone line search technique and its application to
  unconstrained optimization.
\newblock \emph{SIAM Journal on Optimization}, 14\penalty0 (4):\penalty0
  1043--1056, 2004.
\newblock \doi{10.1137/S1052623403428208}.

\bibitem[Zhou et~al.(2023)Zhou, Bao, Ding, and Zhu]{zhou2021semismooth}
Yuhao Zhou, Chenglong Bao, Chao Ding, and Jun Zhu.
\newblock A semismooth {Newton} based augmented {Lagrangian} method for
  nonsmooth optimization on matrix manifolds.
\newblock \emph{Mathematical Programming}, 201\penalty0 (1--2):\penalty0 1--61,
  2023.
\newblock \doi{10.1007/s10107-022-01898-1}.

\bibitem[Zhu et~al.(2026)Zhu, Ding, Liu, and So]{zhu2026primal}
Linglingzhi Zhu, Wentao Ding, Shangyuan Liu, and Anthony Man-Cho So.
\newblock Primal-dual methods for nonsmooth nonconvex optimization with
  orthogonality constraints, 2026.
\newblock arXiv:2604.04130.

\end{thebibliography}
\end{document}